\pgfplotsset{compat=newest}
\newcommand{\pderivative}[2]{\frac{\partial #1}{\partial #2}}
\newcommand{\dV}{\,\mathrm{d}V}
\newcommand{\dS}{\,\mathrm{d}S}
\newcommand{\dt}{\,\mathrm{d}t}
\newcommand{\jump}[1]{\left\llbracket #1 \right\rrbracket}
\newcommand{\avg}[1]{\left\{\!\left\{#1\right\}\!\right\}}
\newcommand{\matx}[1]{\mathcal{\mathcal{#1}}} 
\newcommand{\blockmatx}[1]{\boldsymbol{\mathcal{#1}}}
\newcommand{\blockvec}[1]{\overset{\leftrightarrow}{#1}}
\newcommand{\blockveccon}[1]{\overset{\leftrightarrow}{\tilde{#1}}}
\newcommand{\interpolation}[2]{\mathbb{I}^{#1}\!#2}
\newcommand{\Dprojection}[2]{\vv{\mathbb{D}}^{#1}\!#2}
\newcommand{\rhoM}{\rho_{-}}
\newcommand{\vM}{v_{-}}
\newcommand{\betaM}{\beta_{-}}
\newcommand{\betaP}{\beta_{+}}
\newcommand{\voneM}{(v_1)_{-}}
\newcommand{\voneP}{(v_1)_{+}}
\newcommand{\vtwoM}{(v_2)_{-}}
\newcommand{\vthrM}{(v_3)_{-}}
\newcommand{\BoneM}{(B_1)_{-}}
\newcommand{\BoneP}{(B_1)_{+}}
\newcommand{\BtwoM}{(B_2)_{-}}
\newcommand{\BtwoP}{(B_2)_{+}}
\newcommand{\BthrM}{(B_3)_{-}}
\newcommand{\BthrP}{(B_3)_{+}}
\theoremstyle{theorem}
\newtheorem{thm}{Theorem}
\theoremstyle{remark}
\numberwithin{equation}{section}
\begin{document}

\title{Entropy Stable Space-Time Discontinuous Galerkin Schemes with Summation-by-Parts Property for Hyperbolic Conservation Laws}

\titlerunning{Entropy Stable Space-Time DG Schemes with SBP Property for Non-Linear Hyperbolic PDEs}
\author{Lucas Friedrich \and Gero Schn\"{u}cke \and Andrew R. Winters \and David C.~Del Rey Fern\'{a}ndez \and Gregor J. Gassner \and Mark H. Carpenter}
\institute{Lucas Friedrich (\email{lfriedri@math.uni-koeln.de})  \and Andrew R. Winters \and Gregor J. Gassner \at Mathematical Institute, University of Cologne, Cologne, Germany \\ David C.~Del Rey Fern\'{a}ndez \at National Institute of Aerospace and Computational AeroSciences Branch, NASA Langley Research Center, Hampton, VA, USA \\ Mark H. Carpenter \at Computational AeroSciences Branch, NASA Langley Research Center, Hampton, VA, USA}

\date{Received: date / Accepted: date}

\maketitle

\begin{abstract}
This work examines the development of an entropy conservative (for smooth solutions) or entropy stable (for discontinuous solutions) space-time discontinuous Galerkin (DG) method for systems of non-linear hyperbolic conservation laws. The resulting numerical scheme is fully discrete and provides a bound on the mathematical entropy at any time according to its initial condition and boundary conditions. The crux of the method is that discrete derivative approximations in space and time are summation-by-parts (SBP) operators. This allows the discrete method to mimic results from the continuous entropy analysis and ensures that the complete numerical scheme obeys the second law of thermodynamics. Importantly, the novel method described herein \textit{does not} assume any exactness of quadrature in the variational forms that naturally arise in the context of DG methods. Typically, the development of entropy stable schemes is done on the semi-discrete level ignoring the temporal dependence. In this work we demonstrate that creating an entropy stable DG method in time is similar to the spatial discrete entropy analysis, but there are important (and subtle) differences. Therefore, we highlight the temporal entropy analysis throughout this work. 
For the compressible Euler equations, the preservation of kinetic energy is of interest besides entropy stability. The construction of kinetic energy preserving (KEP) schemes is, again, typically done on the semi-discrete level similar to the construction of entropy stable schemes. We present a generalization of the KEP condition from Jameson to the space-time framework and provide the temporal components for both entropy stability and kinetic energy preservation. The properties of the space-time DG method derived herein is validated through numerical tests for the compressible Euler equations. Additionally, we provide, in appendices, how to construct the temporal entropy stable components for the shallow water or ideal magnetohydrodynamic (MHD) equations.
\end{abstract}

\keywords{Space-Time Discontinuous Galerkin \and Summation-by-Parts \and Entropy Conservation \and Entropy Stability \and Kinetic Energy Preservation}

\section{Introduction}

In this work we focus on the numerical approximation of non-linear systems of hyperbolic conservation laws
\begin{equation}\label{eq:consLaw}
\pderivative{\vec{u}}{t} + \sum_{i=1}^3\pderivative{\vec{f}_i}{x_i} = \vec{0},
\end{equation}
where $\vec{u}$ is the vector of conserved variables, $\vec{f}_i,\,i=1,2,3$ are the physical flux vectors in each direction, and $(x_1,x_2,x_3) = (x,y,z)$ are the physical coordinates. State vectors, e.g. \vec{u}, are of size $p$ depending on the number of equations in the system under consideration. The conservation law is subject to appropriate initial and boundary conditions. It is well known that additional conserved quantities exist that are not explicitly built into the hyperbolic system of partial differential equations (PDEs) \eqref{eq:consLaw}. One such quantity is the mathematical \textit{entropy}. For gas dynamics a possible mathematical entropy is the scaled negative thermodynamic entropy which shows that the mathematical model correctly captures the second law of thermodynamics. For smooth solutions of \eqref{eq:consLaw} the entropy is conserved and decays for discontinuous solutions, e.g. \cite{harten1983,tadmor2003}.

On the mathematical level, we define a strongly convex entropy function $s=s(\vec{u})$. We then define a new set of variables
\begin{equation}\label{eq:etnVar}
\vec{w} := \pderivative{s}{\vec{u}},
\end{equation}
that provide a one-to-one mapping between conservative space and entropy space \cite{tadmor1984}. If we contract \eqref{eq:consLaw} from the left with the entropy variables we find that
\begin{equation}
\vec{w}^T\left(\pderivative{\vec{u}}{t} + \sum_{i=1}^3\pderivative{\vec{f}_i}{x_i}\right) = 0.
\end{equation}
From the definition of the entropy variables we know that
\begin{equation}\label{eq:timeChain}
\vec{w}^T\pderivative{\vec{u}}{t} = \pderivative{s}{t}.
\end{equation}
Next, an appropriate entropy flux is found from compatibility conditions between the entropy variables, the physical flux Jacobians, and the entropy flux Jacobians \cite{Barth1999,tadmor1984,tadmor1987} such that
\begin{equation}\label{EntropyVariables}
\vec{w}^T\left(\sum_{i=1}^3\pderivative{\vec{f}_i}{x_i}\right) = \left(\pderivative{s}{\vec{u}}\right)^T\sum_{i=1}^3\pderivative{\vec{f}_i}{\vec{u}}\pderivative{\vec{u}}{x_i}= \sum_{i=1}^3\left(\pderivative{f^s_i}{\vec{u}}\right)^T\pderivative{\vec{u}}{x_i} = \sum_{i=1}^3\pderivative{{f}^s_i}{x_i}.
\end{equation}
This generates appropriate entropy/entropy-flux pairs $(s\,,\,{f}^{s}_i),\,i=1,2,3$, e.g. \cite{harten1983,mock1980,tadmor1984} and gives the entropy conservation law
\begin{equation}
\pderivative{s}{t} +  \sum_{i=1}^3\pderivative{{f}^s_i}{x_i} = 0,
\end{equation}
for smooth solutions that becomes an inequality for discontinuous solutions. For scalar hyperbolic PDEs, the square entropy $s(u) = u^2/2$ is a possible choice for an entropy function and directly leads to $L^2$ stability, e.g. \cite{gassner_skew_burgers}. For gas dynamics, under appropriate physical assumptions like the positivity of density, the continuous entropy analysis provides insightful $L^2$ stability estimates on \eqref{eq:consLaw}, e.g. \cite{dutt1988,harten1983,tadmor1987}.

We see that a key part of the manipulations to contract into entropy space is the \textit{chain rule} \eqref{EntropyVariables}. Unfortunately, when we move to the discrete analysis the chain rule is lost and special care must be taken to recover it. To demonstrate the idea we first apply the product rule, on the continuous level, and rewrite the condition \eqref{EntropyVariables} to be
\begin{equation}\label{eq:prodRule}
\vec{w}^T\pderivative{\vec{f}}{x} = -\left(\pderivative{\vec{w}}{x}\right)^T\!\!\vec{f} + \pderivative{\left(\vec{w}^T\vec{f}\right)}{x} = \pderivative{f^s}{x}\implies \left(\pderivative{\vec{w}}{x}\right)^T\!\!\vec{f} = \pderivative{\left(\vec{w}^T\vec{f}\right)}{x} - \pderivative{f^s}{x}.
\end{equation}
Next, we consider a one-dimensional finite volume discretization of the form
\begin{equation}\label{eq:FVUpdate}
\pderivative{\vec{u}_k}{t} + \frac{1}{\Delta x}\left[\vec{f}^{n,*}_{k+\tfrac{1}{2}}-\vec{f}^{n,*}_{k-\tfrac{1}{2}}\right]=\vec{0}.
\end{equation}
Here, $\vec{u}_k$ is the mean value of the solution on a given cell $k$ (denoted with an integer index). The finite volume scheme also introduces numerical fluxes $\vec{f}^{*}_{k\pm{1}/{2}}$ at the cell interfaces (denoted with half-integer indices). For complete details on finite volume schemes see, e.g., LeVeque \cite{leveque2002}. In the pioneering work of Tadmor \cite{tadmor1987} it is precisely in the design of the numerical flux function where we can recover the chain rule as well as the entropic properties discretely. In the finite volume context a discrete version of the condition \eqref{eq:prodRule}, by replacing the derivatives with finite differences, is
\begin{equation}\label{eq:TadmorCondition}
\frac{\left(\vec{w}_{k+1} - \vec{w}_k\right)^T}{\Delta x}\vec{f}^{*}_{k+\tfrac{1}{2}} = \frac{\left(\left(\vec{w}_{k+1}\right)^T\vec{f}_{k+1} - \left(\vec{w}_{k}\right)^T\vec{f}_{k} \right)}{\Delta x} - \frac{f^s_{k+1}-f^s_{k}}{\Delta x}.
\end{equation}
If we multiply through by $\Delta x$ then \eqref{eq:TadmorCondition} becomes
\begin{equation}\label{eq:TadmorCondition!}
\left(\vec{w}_{k+1} - \vec{w}_k\right)^T\vec{f}^{*}_{k+\tfrac{1}{2}} = \left(\left(\vec{w}_{k+1}\right)^T\vec{f}_{k+1} - \left(\vec{w}_{k}\right)^T\vec{f}_{k} \right) - \left(f^s_{k+1}-f^s_{k}\right),
\end{equation}
which is exactly the discrete entropy conservation condition for a numerical flux originally developed by Tadmor \cite{tadmor1987}, albeit constructed from a different prospective. Interestingly, to discretely recover the chain rule we actually examine it in terms of the discrete product rule. Over the ensuing decades many researchers extended these low-order spatial methods to high-order spatial approximation with particular finite volume reconstruction techniques, e.g. \cite{fjordholm2012,lefloch2000}.


In 2013, the work of Fisher and Carpenter \cite{fisher2013} opened a new avenue for discrete entropy analysis in the context of high-order numerical methods. They demonstrated that as long as the discrete derivative matrix satisfies the summation-by-parts (SBP) property \cite{Fernandez2014,gassner_skew_burgers,kreiss1} (a discrete analogue of integration-by-parts) the low-order entropy flux of Tadmor is extended to high spatial order \cite{fisher2013,fisher2013_2}. These seminal works of Fisher and his collaborators opened a floodgate of new research into the construction of high-order entropy stable numerical schemes on quadrilateral/hexahedral elements, e.g. \cite{bohm2018,Gassner:2016ye,wintermeyer2017}, or on triangular/tetrahedral elements, e.g. \cite{chan2018,Chen2017,crean2018}. These recent publications feature a wide variety of physical applications such as oceanography, gas dynamics, and plasma flows.

However, there are still questions in the high-order discrete entropy community. Often, the entropy analysis of a numerical method is performed on the semi-discrete level, e.g. the simple finite volume scheme \eqref{eq:FVUpdate}. That is, the temporal approximation is left separate such that the entropy conservation of a scheme is stated in the limit of a shrinking time step for an explicit time integration method, e.g. \cite{bohm2018,carpenter_esdg,chan2018,Chen2017,crean2018,Fjordholm2011}. However, we already noted in the continuous entropy analysis the importance of the chain rule \eqref{eq:timeChain}. Thus, we perform a similar low-order analysis on the temporal term. First, we apply the product rule to rewrite \eqref{eq:timeChain}
\begin{equation}
\vec{w}^T\pderivative{\vec{u}}{t}  = -\left(\pderivative{\vec{w}}{t}\right)^T\vec{u} + \pderivative{\left(\vec{w}^T\vec{u}\right)}{t} = \pderivative{s}{t} \implies\left(\pderivative{\vec{w}}{t}\right)^T\vec{u} = \pderivative{\left(\vec{w}^T\vec{u}\right)}{t} - \pderivative{s}{t}.
\end{equation}
If we discretize the temporal part of \eqref{eq:FVUpdate} with finite differences and multiply through by $\Delta t$ the chain rule condition at a given temporal interface becomes
\begin{equation}\label{eq:timeConditionDiscrete}
\left(\vec{w}^{n+1}_{k} - \vec{w}^n_k\right)^T\left(\vec{u}^{*}\right)^{n+\tfrac{1}{2}}_{k} = \left(\left(\vec{w}^{n+1}_{k}\right)^T\vec{u}^{n+1}_{k} - \left(\vec{w}^n_{k}\right)^T\vec{u}^n_{k} \right) - \left(s^{n+1}_{k}-s^{n}_{k}\right),
\end{equation}
where the index $n$ denotes at which time level the variable lies. Notice, this introduces the need for a numerical state function for $\left(\vec{u}^{*}\right)^{n+1/2}_{k}$ at the time interfaces. 

The design of such numerical state functions in time for a given conservation law is a one focus of the current paper. The notation to perform this discrete analysis is simplified if we introduce the entropy/entropy flux potential pairs \cite{tadmor2003}
\begin{equation}\label{eq:phiPsi}
\left(\phi\,,\,{\psi}_i\right) = \left(\vec{w}^T\vec{u} - s\,,\,\vec{w}^T\vec{f}_i - f^s_i\right),\quad i = 1,2,3. 
\end{equation}
Due to the strong convexity of the entropy function $s(\vec{u})$ the pairs \eqref{eq:phiPsi} act as the Legendre transforms of the entropy/entropy flux pairs. In this way we find that the entropy conditions for the temporal and spatial variables are
\begin{equation}
\begin{aligned}\label{eq:timeSpaceConditionDiscrete}
\left(\vec{w}^{n+1}_{k} - \vec{w}^n_k\right)^T\left(\vec{u}^{*}\right)^{n+\tfrac{1}{2}}_{k} &= \left(\phi_k^{n+1}-\phi_k^n\right),\\[0.15cm]
\left(\vec{w}^n_{k+1} - \vec{w}^n_k\right)^T\left(\vec{f}^{*}_i\right)^n_{k+\tfrac{1}{2}} &= \left(\left(\psi_i\right)_{k+1}^n - \left(\psi_i\right)_k^n\right),\quad i = 1,2,3,
\end{aligned}
\end{equation}
respectively.

Herein we consider a nodal discontinuous Galerkin method in space. We already noted that the key to \textit{discrete} entropy conservation in the spatial components is to design DG operators that possess the SBP property needed to mimic the continuous analysis. Such operators are naturally obtained when using the Legendre-Gauss-Lobatto (LGL) nodes in the nodal DG approximation \cite{gassner_skew_burgers}. This enables the construction of high-order DG methods in space that are entropy conservative (for smooth solutions) or entropy stable (for discontinuous solutions) without the assumption of exact evaluation of the variational forms. Alternatively, there are high-order entropy conservative (or stable) space-time schemes available, e.g. \cite{diosady2015,fjordholm2012}, however they assume exact integration of variational forms in the space-time formulation. This is problematic because aliasing errors introduced by inexact quadratures are unavoidable (or at least cannot be avoided in practical simulations) for commonly examined conservation laws like the Euler equations, e.g. \cite{Gassner:2016ye,moura2017}. So, the design of numerical methods that are entropy conservative/stable in a full space-time domain is an important step in the development of thermodynamically aware numerical methods.

The main focus of this work is to apply a similar nodal DG ansatz with the SBP property to the temporal approximation and ensure that the fully discrete space-time discontinuous Galerkin spectral element method (DGSEM) remains entropy stable. The discrete entropy analysis for the spatial components has been studied by many authors and is well understood as previously discussed. In the temporal analysis we will derive appropriate numerical state functions for the vector of conservative variables $\vec{u}$ from the condition in \eqref{eq:timeConditionDiscrete}. Several authors have used SBP operators to construct temporal derivatives that lead to energy stability for linear problems \cite{boom2015,lundquist2014,nordstrom2013}. We generalize this SBP stability analysis to non-linear problems. Additionally, this work presents, for the first time, a fully discrete entropy analysis to approximate solutions of a general \textit{non-linear} conservation laws \eqref{eq:consLaw} with \textit{inexact numerical integration}. In particular, this work focuses on the temporal component of a space-time DG scheme, but we note that the proofs presented herein \textit{carry over} to any diagonal norm SBP method, such as those found in the finite difference community \cite{Fernandez2014,fisher2013}.

The space-time discontinuous Galerkin (DG) method that is the focus of this work is built from a variational formulation. Thus, we seek an integral statement of the continuous entropy analysis, such that we can clearly \textit{mimic} the continuous steps in the discrete entropy analysis. As such, we consider a spatial domain $\Omega\subset\mathbb{R}^3$ and time interval $[0,T]\subset\mathbb{R}^{+}$. Next, we integrate over the space-time domain to obtain
\begin{equation}
\int\limits_{\Omega}\int\limits_0^T \pderivative{s}{t}\dt\dV + \int\limits_0^T\int\limits_{\Omega}\sum_{i=1}^3\pderivative{{f}^s_i}{x_i}\dV\dt = 0.
\end{equation}
Next, we apply the fundamental theorem of calculus on the temporal term and apply the divergence theorem to the spatial terms
\begin{equation}\label{eq:ContinuousEntropyEquation}
\int\limits_{\Omega}\left(s(x,y,z,T) - s(x,y,z,0)\right)\dV + \int\limits_0^T\int\limits_{\partial\Omega}\sum_{i=1}^3f_i^sn_i\dS\dt = 0,
\end{equation}
where $n_i,\,i=1,2,3$ is the appropriate normal direction at the physical boundary. Rearranging terms and allowing for possibly discontinuous solutions we obtain the integral statement of the continuous entropy inequality 
\begin{equation}\label{eq:ourEndGoal}
\int\limits_{\Omega}s(x,y,z,T)\dV\leq \int\limits_{\Omega} s(x,y,z,0)\dV -\int\limits_0^T\int\limits_{\partial\Omega}\sum_{i=1}^3f_i^sn_i\dS\dt = 0.
\end{equation}
So, we see that the entropy at a given time $T$ is bounded by its initial value provided proper boundary conditions are considered.

The construction of discrete entropy stable schemes for the compressible Euler equations is particularly important to ensure the validity of the second law of thermodynamics. However, the discrete evolution of the entropy isn't the only auxiliary quantity of interest for these equations. In the turbulence community, the discrete kinetic energy evolution is also essential for the robustness of simulations, e.g. \cite{flad2017}. Numerical schemes are deemed kinetic energy preserving (KEP) when, ignoring boundary conditions, the discrete integral of the kinetic energy is not changed by the advective terms, but only by the pressure work \cite{jameson2008}. The development of such semi-discrete KEP schemes has been performed for low-order finite volume schemes \cite{jameson2008,pirozzoli2011} as well as in the high-order DG context \cite{Gassner:2016ye}. An additional result of the space-time DG analysis in this work is the generalization of Jameson's KEP condition for the temporal components of the approximation. In particular, we develop KEP conditions for the construction of numerical state function, $\left(\vec{u}^{*}\right)^{n+1/2}_{k}$, at a temporal interface.

The remainder of this work is organized as follows: We provide a brief introduction to the general form of the spatial DG approximation in Section \ref{sec:DG}. In particular, Sections \ref{sec:Spectral} and \ref{sec:STDG} introduce the most important operators of the scheme such as the discrete derivative matrix. The new SBP temporal analysis for non-linear systems is given in Sec. \ref{sec:Entropy}. This provides detailed derivations of the numerical state values in time and demonstrates their relation to the classical entropy analysis of Tadmor. The new entropy conservative/stable space-time DG method is developed in general. Thus, we present in Section \ref{Sec:Euler} details of the temporal analysis as well as the kinetic energy preservation properties of the approximation for the compressible Euler equations. Next, we present numerical results in Section \ref{sec:numResults} for the Euler equations to verify and validate the theoretical derivations described herein. Concluding remarks are given in Section \ref{sec:conc}. Finally, we also provide details for the temporal extensions of the shallow water and ideal MHD equations in the Appendices \ref{sec:App SW} and \ref{sec:App B}, respectively.

\subsection{Nomenclature}
The notation in this paper is motivated by the compact nomenclature presented in \cite{Gassner2017}. In particular, the following terminology are used:   
\begin{center}
\begin{longtable}{lllllll}
\toprule
$\mathbb{P}^{K}$ & & & &&& Space of polynomials of degree $\leq$ $K=M;N$   \tabularnewline[0.1cm]
$\interpolation{K}{}$ & & & &&& Polynomial Interpolation operator for $K = M;N$   \tabularnewline[0.1cm]
$\mathbb{D}^{M}$, $\Dprojection{N}$ & & & &&& Derivative projection operators defined in Section \ref{sec:STDG} \tabularnewline[0.1cm]
$t$ & & & &&&   Time coordinate \tabularnewline[0.1cm]
$\tau$ & & & &&& Time coordinate in the reference domain $\left[-1,1\right]$ \tabularnewline[0.1cm]
$\left(x,y,z\right)$ & & & &&& Physical spatial coordinates   \tabularnewline[0.1cm] 
$\left(\xi,\eta,\zeta\right)$ & & & &&& Spatial coordinates in the reference domain $\left[-1,1\right]^3$    \tabularnewline[0.1cm] 
$\vv{v}$ & & & &&&   Vector in three dimensional space   \tabularnewline[0.1cm] 
$\vv{n}=n_{1}\hat{x}+n_{2}\hat{y}+n_{3}\hat{z}$ & & & &&&  Physical space normal vector   \tabularnewline[0.1cm] 
$\hat{n}=\hat{n}^{1}\hat{\xi}+n^{2}\hat{\eta}+n^{3}\hat{\zeta}$ & & & &&&  Cartesian space normal vector   \tabularnewline[0.1cm] 
$\textbf{u}$ & & & &&&   Continuous quantity   \tabularnewline[0.1cm] 
$\textbf{U}$ & & & &&&   Polynomial approximation   \tabularnewline[0.1cm] 
$\blockvec{\mathbf{f}}$, $\blockveccon{\mathbf{f}}$ & & & &&&  Block vector of Cartesian and contravariant flux \tabularnewline[0.1cm] 
$\matx{B}$ & & & &&&  Matrix \tabularnewline[0.1cm] 
$\blockmatx{B}$ & & & &&&  Block matrix \tabularnewline[0.1cm]
\bottomrule
\end{longtable} 
\end{center}

\section{The space-time discontinuous Galerkin spectral element approximation}\label{sec:DG}
The compact \textit{block vector} nomenclature in \cite{Gassner2017} simplifies the analysis of the system \eqref{eq:consLaw} on curved hexahedral elements in three spatial dimensions. Thus, we translate the conservation law \eqref{eq:consLaw} in block vector notation. A block vector is highlighted by the double arrow    
\begin{equation}\label{BlockFlux}
\blockvec{\vec{f}}:=\left(\vec{f}\!_{1}^T,\vec{f}\!_{2}^T,\vec{f}\!_{3}^T\right)^T. 
\end{equation}
Two dot products can be defined for block vectors. The dot product of two block vectors is given by   
\begin{equation}\label{ProductBlock}
\blockvec{\vec{f}}\cdot\blockvec{\vec{g}}:=\sum_{i=1}^{3} \vec{f}_{i}^T\vec{g}_{i}. 
\end{equation}
Additionally, the dot product of a vector in the three dimensional space and a block vector is defined by    
\begin{equation}\label{ProductVecBlock}
\vv{v}\cdot\blockvec{\vec{f}}:=\sum_{i=1}^{3} v_{i}\vec{f}_{i}.
\end{equation}
The dot product \eqref{ProductBlock} is a scalar quantity and the dot product \eqref{ProductVecBlock} is a vector in a $p$ dimensional space, where the number $p$ corresponds to the number of conserved variables in the conservation law \eqref{eq:consLaw}. The spatial gradient of the conserved variables is defined by     
\begin{equation}\label{BlockGradient}
\vv{\nabla}_{x}\vec{u}:=\left(\vec{u}_{x}^T,\vec{u}_{y}^T,\vec{u}_{z}^T\right)^T. 
\end{equation}
The dot product \eqref{ProductBlock} and the spatial gradient \eqref{BlockGradient} are used to define the divergence of a block vector flux as  
\begin{equation}\label{BlockDivergence}
\vv{\nabla}_{x}\cdot\blockvec{\vec{f}}:=\sum_{i=1}^{3}\pderivative{\vec{f}\!_{i}}{x_{i}}.
\end{equation}
With these notations we state the conservation law \eqref{eq:consLaw} in the following compact form   
\begin{equation}\label{eq:consLawBlock}
\pderivative{\vec{u}}{t} + \vv{\nabla}_x\cdot\blockvec{\vec{f}} = 0.
\end{equation}

To set up the space-time spectral element approximation, we subdivide the time interval $\left[0,T\right]$ into $K_{T}$ non-overlapping intervals $I_{n}:=\left[t^{n},t^{n+1}\right]$ with cell length $\Delta t^{n}$, $n=1,\dots,K_{T}$. These time intervals can be mapped into the temporal computational domain $E=\left[-1,1\right]$ by the affine linear mapping $t=\pi\left(\tau\right)=\frac{\Delta t^{n}}{2}\left(\tau+1\right)+t^{n}$. The physical domain $\Omega$ is subdivided into $K_{S}$ non-overlapping and conforming hexahedral elements, $e_{k}$, $k=1,\dots,K_{S}$. These elements can have curved faces to accurately approximate the geometry. The temporal and spatial elements provide the space-time elements $\mathcal{E}_{n,k}:=I_{n}\times e_{k}$. Each spatial element $e_{k}$ is mapped into the spatial computational domain $E^{3}=\left[-1,1\right]^{3}$ with a mapping $\vv{x}=\vv{\chi}\left(\vv{\xi}\right)$, where $\vv{\chi} =X_{1}\hat{x}_{1}+X_{2}\hat{x}_{2}+X_{3}\hat{x}_{3}$ and the hats represent unit vectors. Likewise, the reference element space is represented by $\vv{\xi}=\xi \hat{\xi}+\eta \hat{\eta}+\zeta \hat{\zeta}$. The spatial mapping supplies the three covariant basis vectors   
\begin{equation}
\vv{a}\!_{i}:=\frac{\partial\vv{\chi}}{\partial\xi^{i}},\qquad i=1,2,3,
\end{equation}
and the (volume weighted) contravariant vectors 
\begin{equation}\label{ContravariantVectors}
\mathcal{J}\vv{a}^{i}:=\vv{a}\!_{j}\times\vv{a}\!_{k},\qquad\left(i,j,k\right)\ \text{cyclic},
\end{equation}
where the Jacobian determinate of the spatial mapping is given by 
\begin{equation}
\mathcal{J}:=\vv{a}_{i}\cdot\left(\vv{a}\!_{j}\times\vv{a}\!_{k}\right),\qquad\left(i,j,k\right)\ \text{cyclic}. 
\end{equation}
Additionally, the contravariant coordinate vectors satisfy the metric identities
\begin{equation}\label{MetricIdentities}
\sum_{i=1}^{3}\frac{\partial\left(Ja_{j}^{i}\right)}{\partial\xi^{i}}=0,\qquad j=1,2,3. 
\end{equation}

In \cite{Gassner2017}, the following block matrix has been introduced to transform the spatial gradient \eqref{BlockGradient} and the divergence \eqref{BlockDivergence}    
\begin{equation}\label{TransformationBlock}
\blockmatx{M}=\left[\begin{matrix}\mathcal{J}a_{1}^{1}\matx{I_{\textit{p}}} & \mathcal{J}a_{1}^{2}\matx{I_{\textit{p}}} & \mathcal{J}a_{1}^{3}\matx{I_{\textit{p}}}\\[0.1cm]
\mathcal{J}a_{2}^{1}\matx{I_{\textit{p}}} & \mathcal{J}a_{2}^{2}\matx{I_{\textit{p}}} & \mathcal{J}a_{2}^{3}\matx{I_{\textit{p}}}\\[0.1cm]
\mathcal{J}a_{3}^{1}\matx{I_{\textit{p}}} & \mathcal{J}a_{3}^{2}\matx{I_{\textit{p}}} & \mathcal{J}a_{3}^{3}\matx{I_{\textit{p}}}\\[0.1cm]
\end{matrix}\right],
\end{equation}
where the matrix $\matx{I_{\textit{p}}}$ is the $p\times p$ identity matrix. The transformation of the gradient becomes     
\begin{equation}\label{BlockGradientReference}
\vv{\nabla}_{x}\mathbf{u}=\frac{1}{\mathcal{J}}\blockmatx{M}\vv{\nabla}_{\xi}\mathbf{u}
\end{equation}  
by the block matrix \eqref{TransformationBlock}. Moreover, by taking advantage of \eqref{MetricIdentities}, the transformation of the divergence can be written as 
\begin{equation}\label{BlockDivergenceReference}
\vv{\nabla}_{x}\cdot\blockvec{\vec{f}}=\frac{1}{\mathcal{J}}\vv{\nabla}_{\xi}\cdot\blockmatx{M}^{T}\blockvec{\vec{f}}.
\end{equation}  
Hence, the contravariant block vector flux is given by 
\begin{equation}\label{ContravariantBlockVector}
\blockveccon{\vec{f}}= \blockmatx{M}^{T}\blockvec{\vec{f}}.
\end{equation} 
Finally, the chain rule formula provides the identity  
\begin{equation}
\pderivative{\vec{u}}{\tau}=\frac{\Delta t}{2}\pderivative{\vec{u}}{t}
\qquad \text{or} \qquad 
\pderivative{\vec{u}}{t}=\frac{2}{\Delta t}\pderivative{\vec{u}}{\tau}.
\end{equation}

Thus, for each space-time element $\mathcal{E}_{n,k}$ the system \eqref{eq:consLawBlock} transformed into the conservation law  
\begin{equation}\label{eq:consLawRD}
\mathcal{J}\pderivative{\vec{u}}{\tau}+\frac{\Delta t}{2}\vv{\nabla}_{\xi}\cdot\blockveccon{\vec{f}}=0.
\end{equation}
In the following sections, a space-time approximation for the system \eqref{eq:consLawRD} is derived.

\subsection{Modules for the spectral element approximation}\label{sec:Spectral}
In the space-time discontinuous Galerkin spectral element approximation, the solution and fluxes of the system \eqref{eq:consLawRD} are approximated by tensor product Lagrange polynomials of degree $N$ in space and Lagrange polynomials of degree $M$ in time, e.g.,
\begin{equation}\label{SpatialPolynomial}
\mathbf{u}\left(\tau,\cdot\right)\approx\mathbf{U}\left(\tau,\cdot\right)\in\mathbb{P}^{N}\!\!\left(E^{3}\right),\qquad\text{for fixed }\tau\in E,
\end{equation}
and additionally
\begin{equation}\label{TemporalPolynomial}
\mathbf{u}\left(\cdot,\vv{\xi}\right)\approx\mathbf{U}\left(\cdot,\vv{\xi}\right)\in\mathbb{P}^{M}\!\!\left(E\right),\qquad\text{for fixed }\vv{\xi}\in E^{3}.
\end{equation}
The one dimensional Lagrange polynomial basis for the temporal approximation is computed at $M+1$ Legendre-Gauss-Lobatto (LGL) points. The three dimensional tensor product basis for the spatial approximation is computed from an one dimensional Lagrange  polynomial basis with nodes at $N+1$ LGL points. The nodal values of the space-time approximation are represented by $\mathbf{u}\left(\tau_{\sigma},\xi_{i},\eta_{j},\zeta_{k}\right)\approx\mathbf{U}_{\sigma ijk}$, $\sigma=0,\dots,M$ and $i,j,k=0,\dots,N$. Moreover, we introduce for all $\tau,\xi,\eta,\zeta\in E$ the space-time interpolation operator   
\begin{equation}\label{InterpolationSpaceTime}
\interpolation{N\times M}{\left(\mathbf{F}\right)}\left(\tau,\xi,\eta,\zeta\right)
=\sum_{\sigma=0}^{M}\ell_{\sigma}\left(\tau\right)\sum_{i,j,k=0}^{N}\ell_{i}\left(\xi\right)\ell_{j}\left(\eta\right)\ell_{k}\left(\zeta\right)\mathbf{F}_{\sigma ijk}.
\end{equation} 
Derivatives are approximated by exact differentiation of the polynomial interpolants. It should be noted that differentiation and interpolation do not commute \cite{CHQZ:2006,Kopriva:2009nx}. In general it is $ \left(\interpolation{K}{\left(g\right)}\right)'\neq \interpolation{K-1}{\left(g'\right)}$, $K=M;N$. The geometry and metric terms are not time dependent. Hence these quantities are approximated using spatial polynomial basis, e.g.,   
\begin{equation}\label{ApproximationMetricterm}
\mathcal{J}\approx J\in\mathbb{P}^{N}\!\!\left(E^{3}\right).
\end{equation}    
However, the contravariant coordinate vectors need to be discretized in such a way that the metric identities \eqref{MetricIdentities} are satisfied on the discrete level, too. Kopriva \cite{kopriva2006metric} introduced the following discretization for the contravariant basis vectors    
\begin{equation}\label{DiscreteContravariantVectors}
Ja_{\alpha}^{i}=-\hat{x}_{i}\cdot\nabla_{\xi}\times\left(\interpolation{N}{\left(X_{\gamma}\nabla_{\xi}X_{\beta}}\right)\right),\quad i=1,2,3,\quad \alpha=1,2,3,\quad\left(\alpha,\beta,\gamma\right) \text{ cyclic}. 
\end{equation}
This discretization ensures the discrete metric identities  
\begin{equation}\label{DiscreteMetricIdentities}
\sum_{i=1}^{3}\frac{\partial\interpolation{N}{\left(Ja_{j}^{i}\right)}}{\partial\xi^{i}}=0,\qquad j=1,2,3.
\end{equation}
Thus, in order to guarantee that the spectral element approximation discretization respects the free stream preservation property, we need to discretize the contravariant basis vectors in the block matrix \eqref{TransformationBlock} by \eqref{DiscreteContravariantVectors}.      

Temporal integrals are approximated by a $2M-1$ accurate LGL quadrature formula and a tensor product extension of a $2N-1$ accurate LGL quadrature formula is used to approximate the spatial integrals. Hence, interpolation and quadrature nodes are \textit{collocated}. In one spatial dimension the LGL quadrature formula is given by   
\begin{equation}\label{GLQ}
\int\limits_{-1}^{1}g\left(\xi\right)\approx\sum_{i=0}^{K}\omega_{i}g\left(\xi_{i}\right)=\sum_{i=0}^{K}\omega_{i}g_{i},\qquad K=M;N,  
\end{equation}
where $\omega_{i}$, $i=0,\dots,K$, are the quadrature weights and $\xi_{i}$, $i=0,\dots,K$, are the LGL quadrature points. The formula \eqref{GLQ} motivates the definition of the inner product notation 
\begin{equation}\label{SpaceTimeInnerproduct}
\left\langle \mathbf{F},\mathbf{G}\right\rangle_{\!N\times M}
:=\sum_{i,j,k=0}^{N}\omega_{i}\omega_{j}\omega_{k}
\left(\sum_{\sigma=0}^{M}\omega_{\sigma}\mathbf{F}_{\sigma ijk}\mathbf{G}_{\sigma ijk}\right)
=\sum_{i,j,k=0}^{N}\sum_{\sigma=0}^{M}\omega_{\sigma}\omega_{ijk}\mathbf{F}_{\sigma ijk}\mathbf{G}_{\sigma ijk},
\end{equation}
for two functions $\mathbf{F}$ and $\mathbf{G}$. We note that the inner product \eqref{SpaceTimeInnerproduct} satisfies        
\begin{equation}
\left\langle \interpolation{N\times M}{\left(\mathbf{F}\right)},\boldsymbol{\varphi}\right\rangle_{\!N\times M}
=\left\langle \mathbf{F},\boldsymbol{\varphi}\right\rangle_{\!N\times M},
\end{equation}
for all test functions $\boldsymbol{\varphi}$ which are tensorial polynomials up to degree $N$ in space and polynomials up to degree $M$ in time. In addition, we introduce the notation    
\begin{equation}
\left.\left\langle \mathbf{F},\mathbf{G}\right\rangle _{\!N}\right|_{-1}^{\,1}=
\left.\left\langle \mathbf{F},\mathbf{G}\right\rangle _{\!N}\right|_{\tau=1}- 
\left.\left\langle \mathbf{F},\mathbf{G}\right\rangle _{\!N}\right|_{\tau=-1}
\end{equation}
with 
\begin{equation}
\left.\left\langle \mathbf{F},\mathbf{G}\right\rangle _{\!N}\right|_{\tau=1}:=\sum_{i,j,k=0}^{N}\omega_{ijk}\mathbf{F}_{Mijk}\mathbf{G}_{Mijk},
\qquad\left.\left\langle \mathbf{F},\mathbf{G}\right\rangle _{\!N}\right|_{\tau=-1}:=\sum_{i,j,k=0}^{N}\omega_{ijk}\mathbf{F}_{0ijk}\mathbf{G}_{0ijk}
\end{equation}
to approximate the spatial integrals at temporal interfaces. A further discrete quantity for the space-time discretization is defined by   
\begin{align}\label{DiscreteSpatialSurface}
\begin{split}
\int\limits_{\partial E^{3},N} 
\left\langle \left\{\blockvec{\vec{F}}\cdot \hat{n}\right\},1\right\rangle_{M}\dS
:=& \quad \sum_{\sigma=0}^{M} \omega_{\sigma}
\bigg[ 
\sum_{j,k=0}^{N}\omega_{jk}
\left(\left(\mathbf{F}_{1}\right)_{\sigma N jk}-\left(\mathbf{F}_{1}\right)_{\sigma 0 jk} \right)  \\
& \qquad \qquad  +\sum_{i,k=0}^{N}\omega_{ik}\left(\left(\mathbf{F}_{2}\right)_{\sigma iNk}-\left(\mathbf{F}_{2}\right)_{\sigma i0k} \right)  \\
& \qquad \qquad +\sum_{i,j=0}^{N}\omega_{ij}\left(\left(\mathbf{F}_{3}\right)_{\sigma ijN}-\left(\mathbf{F}_{3}\right)_{\sigma ij0} \right)\bigg], 
\end{split}
\end{align} 
where $\hat{n}$ is the reference space unit outward normal at the faces of $E^{3}$.

The spectral element approximation with LGL points for interpolation and quadrature  provides a summation-by-parts (SBP) operator  $\matx{Q}=\matx{M}\,\matx{D}$ with the mass matrix $\matx{M}$ and the derivative matrix $\matx{D}$ \cite{gassner_skew_burgers} where the LGL quadratures are $2K-1$, $K=M;N$, accurate and two of the LGL points match with the cell boundary points of the reference cell $\left[-1,1\right]$. The the mass matrix and the derivative matrix are given by  
\begin{equation}\label{SBPcoefficient}
\mathcal{M}_{ij}=\omega_{i}\delta_{ij}, \qquad 
\mathcal{D}_{ij}=\ell_{j}'\left(\xi_{i}\right) \qquad 
i,j=0,\dots,K=M;N,
\end{equation}
where $\delta_{ij}$ is the Kronecker symbol, $\left\{ \ell_{j}\right\} _{j=0}^{K=M;N}$ is the Lagrange basis and $\left\{ \xi_{i}\right\} _{i=0}^{K=M;N}$ are the LGL points. The important characteristic of this SBP operator is the property      
\begin{equation}\label{SBP}
\matx{Q}+\matx{Q}^T=\matx{B},  
\end{equation}  
where $\matx{B}=\text{diag}\left(-1,0,\dots,0,1\right)$, e.g. \cite{Fernandez2014,kreiss1}. The SBP property \eqref{SBP} is the essential ingredient to prove the entropy stability of our numerical method.            

Finally, it is worth noting that the spatial ansatz \eqref{SpatialPolynomial} \textit{without} the temporal terms \eqref{TemporalPolynomial} leads to a semi-discrete high-order nodal discontinuous Galerkin spectral element method (DGSEM). There are many recent publications which detail the construction of an entropy stable DGSEM for conservation laws in the semi-discrete framework, e.g., \cite{bohm2018,carpenter_esdg,chan2018,Chen2017,crean2018,Gassner:2016ye}.

\subsection{The space-time discontinuous Galerkin method}\label{sec:STDG}
Now, we apply the notation introduced in Section \ref{sec:Spectral} and construct a space-time DGSEM. First, we replace the solution, fluxes and spatial Jacobian 
$\mathcal{J}$ in the system \eqref{eq:consLawRD} by polynomial approximations and multiply the system with test functions $\boldsymbol{\varphi}$. The test functions are tensorial polynomials up to degree $N$ in space and polynomials up to degree $M$ in time. Next, we integrate the resulting system over a space-time element $\mathcal{E}_{n,k}$ and use integration by parts to separate boundary and volume contributions in time and space. The integrals in the variational form are approximated with high-order Legendre-Gauss-Lobatto (LGL) quadrature. Then, we insert numerical surface states $\mathbf{U}^{*}$ at the temporal element interfaces. Likewise, we insert numerical surface fluxes $\blockvec{\tilde{\mathbf{F}}}{}^{*}$ at the spatial element interfaces. Afterwards, we use the SBP property \eqref{SBP} for the temporal and spatial volume contribution to get the space-time DGSEM in strong form. Finally, since we are interested in the construction of an entropy stable method, we apply the same approach as for the semi-discrete DGSEM \cite{bohm2018,carpenter_esdg,chan2018,Chen2017,crean2018,Gassner:2016ye} and replace the interpolation operators in the temporal and spatial volume contribution by derivative projection operators. The derivative projection operators are defined by numerical volume states/fluxes denoted with a ``\#'' symbol. 

The temporal derivative projection operator is defined by    
\begin{equation}\label{TemporalDerivativeProjectionOperator}
\mathbb{D}^{M}\mathbf{U}_{\sigma ijk}^{\text{EC}}:=2\sum_{\theta=0}^{M}\mathcal{D}_{\sigma\theta}\mathbf{U}^{\#}\left(\mathbf{U}_{\sigma ijk},\mathbf{U}_{\theta ijk}\right),
\end{equation}
where the state $\mathbf{U}^{\#}$ is consistent, symmetric and satisfies the discrete entropy conservation condition \eqref{eq:timeSpaceConditionDiscrete} in the nodal values such that   
\begin{equation}\label{DiscreteEntropyConservation}
\jump{\mathbf{W}}_{\left(\sigma,\theta\right)ijk}^{T}
\mathbf{U}^{\#}\left(\mathbf{U}_{\sigma ijk},\mathbf{U}_{\theta ijk}\right)=
\jump{\varPhi}_{\left(\sigma,\theta\right)ijk},
\end{equation}
for $\sigma,\theta=0,\dots,M$ and $i,j,k=0,\dots,N$, where 
\begin{equation}\label{EntropyFunctional1}
\varPhi:=\mathbf{W}^T\mathbf{U}-S, 
\end{equation}  
and the volume jumps in \eqref{DiscreteEntropyConservation} are defined by  
\begin{equation}\label{TemporalVolumeJumps}
\jump{\cdot}_{\left(\sigma,\theta\right)ijk}:=\left(\cdot\right)_{\sigma ijk}-\left(\cdot\right)_{\theta ijk}.
\end{equation}   
The quantity $\mathbf{W}$ represents the polynomial approximation of the entropy variables \eqref{EntropyVariables}. 

For now, we keep the analysis general. In Section \ref{sec:EulerStuff}, we construct a temporal volume state function for the compressible Euler equations  
and in Appendices \ref{sec:App SW} and \ref{sec:App B}, we construct temporal volume state functions for the shallow water and ideal MHD equations which respect the condition \eqref{DiscreteEntropyConservation}.        

The spatial derivative projection operator is more complex, since the discretization of the metric terms must be taken into account. A suitable derivative operator was introduced in \cite{Gassner2017} and is given by 
\begin{align}\label{SpatialDerivativeProjectionOperator}
\begin{split}
\Dprojection{N}\cdot\blockvec{\tilde{\mathbf{F}}}{}_{\sigma ijk}^{\text{EC}}
:=2\sum_{m=0}^{N}\quad\,& 
\mathcal{D}_{im}\left(\blockvec{\mathbf{F}}{}^{\text{EC}}\left(\mathbf{U}_{\sigma ijk},\mathbf{U}_{\sigma mjk}\right)\cdot\avg{J\vv{a}^{1}}_{\left(i,m\right)jk}\right) \\
\quad +  & \,\mathcal{D}_{jm}\left(\blockvec{\mathbf{F}}{}^{\text{EC}}\left(\mathbf{U}_{\sigma ijk},\mathbf{U}_{\sigma imk}\right)\cdot\avg{J\vv{a}^{2}}_{ i\left(j,m\right)k}\right) \\
\quad  +& \, \mathcal{D}_{km}\left(\blockvec{\mathbf{F}}{}^{\text{EC}}\left(\mathbf{U}_{\sigma ijk},\mathbf{U}_{\sigma ijm}\right)\cdot\avg{J\vv{a}^{3}}_{ ij\left(k,m\right)}\right), 
\end{split}
\end{align}
with the volume averages of the metric terms, e.g. \label{SpatialVolumeJumps}  
\begin{equation}\label{SpatialVolumeAverages}
\avg{\cdot}_{(i,m)jk}:=\frac{1}{2}\left[\left(\cdot\right)_{ ijk}+\left(\cdot\right)_{ mjk}\right],
\end{equation}
where the average is only done in the spatial directions as the metric terms are constant in time. The flux $\blockvec{\mathbf{F}}{}^{\text{EC}}=\left(\mathbf{F}_{1}^{\text{EC}},\mathbf{F}_{2}^{\text{EC}},\mathbf{F}_{3}^{\text{EC}}\right)^T$ is consistent with $\blockvec{\mathbf{F}}$ and symmetric such that, e.g.,
\begin{equation}\label{FluxSymmetric}
\blockvec{\mathbf{F}}{}^{\text{EC}}\left(\mathbf{U}_{\sigma ijk},\mathbf{U}_{\sigma mjk}\right)=\blockvec{\mathbf{F}}{}^{\text{EC}}\left(\mathbf{U}_{\sigma mjk},\mathbf{U}_{\sigma ijk}\right),
\end{equation}       
for $\sigma=0,\dots,M$ and $i,j,k,m=0,\dots,N$. Furthermore, as in the work of Gassner et al. \cite{Gassner2017}, the flux functions $\mathbf{F}_{l}^{\text{EC}}$, $l=1,2,3$, satisfy Tadmor's discrete entropy condition 
\begin{align}\label{EntropyFunctional2}
\begin{split}
\jump{\mathbf{W}}_{\sigma(i,m)jk}^{T}\mathbf{F}_{l}^{\text{EC}}\left(\mathbf{U}_{\sigma ijk},\mathbf{U}_{\sigma mjk}\right)
=&\jump{\Psi_{l}}_{\sigma(i,m)jk}, \\
\jump{\mathbf{W}}_{\sigma i(j,m)k}^{T}\mathbf{F}_{l}^{\text{EC}}\left(\mathbf{U}_{\sigma ijk},\mathbf{U}_{\sigma imk}\right)
=&\jump{\Psi_{l}}_{\sigma i(j,m)k}, \\
\jump{\mathbf{W}}_{\sigma ij(k,m)}^{T}\mathbf{F}_{l}^{\text{EC}}\left(\mathbf{U}_{\sigma ijk},\mathbf{U}_{\sigma ijm}\right)
=&\jump{\Psi_{l}}_{\sigma ij(k,m)},
\end{split}
\end{align}
where 
\begin{equation}\label{EntropyFunctional2a}
\Psi_{l}:=\mathbf{W}^{T}\mathbf{F}_{l}-F_{l}^{s}, \quad l=1,2,3, 
\end{equation} 
and the volume jumps, e.g. 
\begin{equation}\label{SpatialVolumeJumps}
\jump{\cdot}_{\sigma (i,m)jk}:=\left(\cdot\right)_{\sigma ijk}-\left(\cdot\right)_{\sigma mjk}.
\end{equation}
There are several available entropy conservative flux functions with these properties, e.g. \cite{Chandrashekar2012,IsmailRoe2009} for the Euler equations. In particular, if we take the test function to be the interpolant of the entropy variables, $\boldsymbol\varphi = \vec{W}$, we obtain from the SBP property \eqref{SBP} and the same arguments as in the semi-discrete case \cite[Appendix B.3]{Gassner2017} the identity   
\begin{equation}\label{EntropyVolumeContribution}
\left\langle \Dprojection{N}\cdot\blockvec{\tilde{\mathbf{F}}}{}^{\text{EC}},\mathbf{W}\right\rangle_{\!N\times M}=\int\limits _{\partial E^{3},N}\left\langle \tilde{F}_{\hat{n}}^{s},1\right\rangle_{\!M}\dS,
\end{equation}
where $\tilde{F}^{s}_{\hat{n}}:=\vv{\tilde{F}}^{s}\cdot\hat{n}$. That is, the volume contributions of the spatial entropy conservative DGSEM become the discrete entropy flux on the boundary.

Finally, for each space-time element $\mathcal{E}_{n,k}$ and all test functions $\boldsymbol{\varphi}$ the space-time DGSEM can be written in the following compact variational form:      
\begin{equation}\label{SpaceTimeDG1}
A_{T}\!\left(\mathbf{U},\boldsymbol{\varphi}\right)
+A_{S}\!\left(\blockvec{\mathbf{F}},\boldsymbol{\varphi}\right)=0,
\end{equation}
where the temporal part $A_{T}\left(\mathbf{U},\boldsymbol{\varphi}\right)$ and the spatial $A_{S}\!\left(\blockvec{\mathbf{F}},\boldsymbol{\varphi}\right)$ of the space-time DGSEM are defined by      
\begin{align}
A_{T}\!\left(\mathbf{U},\boldsymbol{\varphi}\right)
:=& \quad \left\langle J
\mathbb{D}^{M}\mathbf{U}^{\#},\boldsymbol{\varphi}\right\rangle_{\!N\times M} 
+\left.\left\langle J\left(\mathbf{U}^{*}-\mathbf{U}\right),\boldsymbol{\varphi}\right\rangle_{\!N}\right|_{-1}^{\,1}, \label{TemporalPart} \\
& \nonumber \\
A_{S}\!\left(\blockvec{\mathbf{F}},\boldsymbol{\varphi}\right)
:=& \quad\frac{\Delta t}{2}\left\langle \Dprojection{N}\cdot\blockvec{\tilde{\mathbf{F}}}{}^{\text{EC}},\boldsymbol{\varphi}\right\rangle_{\!N\times M} +\frac{\Delta t}{2}\int\limits_{\partial E^{3},N}\left\langle \boldsymbol{\varphi}^{T}\left(\tilde{\mathbf{F}}_{\hat{n}}^{*}-\tilde{\mathbf{F}}_{\hat{n}}\right),1\right\rangle_{\!M}\dS.  
\label{SpatialPart}  
\end{align}
The flux $\tilde{\mathbf{F}}_{\hat{n}}$ in \eqref{SpatialPart} is defined by 
\begin{equation}\label{BlockvectorNormal}
\tilde{\mathbf{F}}_{\hat{n}}=\left(\hat{s}\vv{n}\right)\cdot\blockvec{\mathbf{F}}
=\sum_{l=1}^{3}\hat{n}^{l}\left(Ja_{1}^{l}F_{1}+Ja_{2}^{l}F_{2}+Ja_{3}^{l}F_{3}\right)
=\left\{\blockmatx{M}\,\blockvec{\mathbf{F}}\right\}\cdot \hat{n}, 
\quad \hat{s}=\left|\sum_{l=1}^{3}\left(J\vv{a}^{l}\right)\hat{n}^{l}\right|.
\end{equation}
To specify the state functions in \eqref{TemporalPart} and the numerical surface flux functions \eqref{SpatialPart}, we introduce notation for states at the LGL nodes along an interface between two temporal intervals and two spatial elements to be a primary ``$-$'' and complement the notation with a secondary ``$+$'' to denote the value at the LGL nodes on the opposite side. Then the orientated jump and
the arithmetic mean at the interfaces are defined by  
\begin{equation}\label{SurfaceJumpMean}
\jump{\cdot}:=\left(\cdot\right)_{+}-\left(\cdot\right)_{-},
\quad \text{and} \quad 
\avg{\cdot}:=\frac{1}{2}\left[\left(\cdot\right)_{+}+\left(\cdot\right)_{-}\right].  
\end{equation}
When applied to vectors, the average and jump operators are evaluated separately for each vector component. Then the normal vector $\vv{n}$ is defined unique to point from the ``$-$'' to the ``$+$'' side. This notation allows for the consistent
presentation of the temporal state functions $\mathbf{U}^{*}$ and the spatial numerical surface flux functions. A temporal state function with the property \eqref{DiscreteEntropyConservation} is not a computationally tractable option, since it couples all the time slabs.  The only feasible choice as a temporal numerical state is the pure upwind state function $\mathbf{U}^{*}=\mathbf{U}_{-}$, as it is the only flux that decouples the time slaps. We assume that the upwind state satisfies     
\begin{equation}\label{TemporalStability1}
\jump{\mathbf{W}}^T\mathbf{U}^{*}\leq\jump{\varPhi}.
\end{equation} 
In Section \ref{sec:EulerStuff} (Lemma 1), we prove that the upwind state function for the compressible Euler equations satisfies this assumption. Furthermore, in the appendices \ref{sec:ShallowWater} and \ref{sec:UpwindMHD} it is proven that the upwind state functions for the shallow water equations and MHD equations satisfy \eqref{TemporalStability1}. The contravariant surface numerical fluxes are computed
from the entropy conserving Cartesian fluxes  $\mathbf{F}_{l}^{\text{EC}}$, $l=1,2,3$, as follows:  
\begin{equation}\label{ContravariantSurfaceFlux}
\tilde{\mathbf{F}}_{\hat{n}}^{*}=\hat{s}\left(n_{1}\mathbf{F}_{1}^{\text{EC}}+n_{2}\mathbf{F}_{2}^{\text{EC}}+n_{3}\mathbf{F}_{3}^{\text{EC}}\right).
\end{equation}
The definition of the numerical surface flux functions \eqref{ContravariantSurfaceFlux} produces the equality   
\begin{equation}\label{SurfaceEntropyCondition}
\sum_{n=1}^{K_{T}}\underset{\text{faces}}{\sum_{\text{Interior}}}\frac{\Delta t^{n}}{2}\int\limits_{\partial E^{3},N}\left\langle \jump{\mathbf{W}^{n,k}}^{T}\tilde{\mathbf{F}}_{\hat{n}}^{n,k,*}-\jump{\left(\mathbf{W}^{n,k}\right)^{T}\tilde{\mathbf{F}}_{\hat{n}}^{n,k}}+\jump{\tilde{F}_{\hat{n}}^{s}},1\right\rangle_{\!M}\dS = 0,
\end{equation}
by the same arguments as in \cite[Appendix B.2]{Gassner2017}. However, in order to obtain an entropy stable discretization for discontinuous solutions, an addition matrix dissipation operator needs to be added to the entropy conserving flux functions 
$\mathbf{F}_{l}^{\text{EC}}$, $l=1,2,3$. Then, the Cartesian fluxes are computed by             
\begin{equation}\label{CartesianFluxesES}
\mathbf{F}_{l}^{\text{ES}}:=\mathbf{F}_{l}^{\text{EC}}-\frac{1}{2}\matx{R}_{l}\left|\Lambda_{l}\right|\matx{T}_{l}\,\matx{R}_{l}^{T}\jump{\mathbf{W}},\qquad l=1,2,3,
\end{equation} 
the quantity $\matx{R}_{l}\left|\Lambda_{l}\right|\matx{T}\,\matx{R}_{l}^{T}$, $l=1,2,3$, represents a positive semidefinite matrix dissipation operator. The matrix $\matx{R}_{l}$ contains the averaged right eigenvectors of the flux Jacobian, the corresponding absolute values of the averaged eigenvalues are contained in the diagonal matrix $\Lambda_{l}$ and the diagonal matrix $\matx{T}_{l}$ is a scaling matrix (see \cite{Barth1999} for details).  Dissipation matrices for the Euler equations can be found in \cite[Appendix A]{Gassner2017}. Then, the contravariant surface numerical fluxes are computed by replacing the fluxes $\mathbf{F}_{l}^{\text{EC}}$ with $\mathbf{F}_{l}^{\text{ES}}$ in \eqref{SpatialPart}. When the fluxes $\mathbf{F}_{l}^{\text{ES}}$, $l=1,2,3$ are used to compute the contravariant surface numerical fluxes the identity \eqref{SurfaceEntropyCondition} becomes an inequality bounded above by zero.

\begin{remark}
For the MHD equations the spatial part \eqref{SpatialPart} must be altered because a non-conservative term is necessary to build an entropy stable method, see Bohm et al. for details \cite{bohm2018}. In Appendix \ref{sec:App B3} the correct spatial part for the MHD equations is presented. By replacing \eqref{SpatialPart} with the appropriate terms a space-time DGSEM to solve the ideal MHD equations can be constructed. 
\end{remark}

\subsection{Discrete entropy analysis}\label{sec:Entropy}
The spatial integral of the entropy is bounded in time on the continuous level. Thus, it is desirable that a numerical method is stable in the sense that a discrete version of this integral is bounded in time, too. Methods with this stability property are called entropy stable methods. 

In the context of the space-time DGSEM, we are interested to find an upper bound for the quantity 
\begin{equation}\label{DisreteEntropyIntegral1}
\bar{S}\left(\mathbf{U}\left(T\right)\right):=\sum_{k=1}^{K_{S}}\left.\left\langle s\left(\mathbf{U}^{K_{T},k}\right),J^{k}\right\rangle_{\!N}\right|_{\tau=1}.  
\end{equation}
We note that \eqref{DisreteEntropyIntegral1} is a discrete version of the spatial integral on the left of the continuous inequality \eqref{eq:ourEndGoal}. The discrete upper bound should depend on a discrete contribution from the boundary faces and the initial quantity of the entropy
\begin{equation}\label{DisreteEntropyIntegral2}
\bar{S}\left(\mathbf{u}\left(0\right)\right):=\sum_{k=1}^{K_{S}}\left.\left\langle s\left(\mathbf{u}^{1,k}\right),J^{k}\right\rangle_{\!N}\right|_{\tau=-1}, 
\end{equation}
where $\left.\mathbf{u}^{1,k}\right|_{\tau=-1}=\mathbf{u}^{k}\left(0\right)$ is the initial condition prescribed to the conservation law in the spatial cell $e_{k}$, $k=1,\dots,K_{S}$.

\begin{thm}[Entropy stability]
Consider the space-time DGSEM with Dirichlet boundary conditions in time and periodic boundary conditions in space. Assume that the temporal numerical states are upwind fluxes $\mathbf{U}^{*}=\mathbf{U}_{-}$ with the property \eqref{TemporalStability1} and assume that the spatial numerical surface fluxes $\tilde{\mathbf{F}}_{\hat{n}}^{*}$ are computed by \eqref{ContravariantSurfaceFlux}. 
Then the space-time DGSEM is entropy stable.  
\end{thm}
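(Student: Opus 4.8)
The plan is to mimic the continuous entropy analysis discretely by choosing the test function to be the interpolated entropy variables, $\boldsymbol{\varphi}=\mathbf{W}$, in the variational statement \eqref{SpaceTimeDG1}, and then summing the resulting identity over every space-time element $\mathcal{E}_{n,k}$. First I would treat the temporal volume term in \eqref{TemporalPart}. Expanding the definition \eqref{TemporalDerivativeProjectionOperator}, writing $\mathcal{Q}=\mathcal{M}\mathcal{D}$, and exploiting the symmetry of $\mathbf{U}^{\#}$ together with the SBP property \eqref{SBP}, the factor of two lets me split the double sum into a volume piece governed by $\mathcal{Q}$ and a boundary piece governed by $\mathcal{B}$. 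The discrete entropy conservation condition \eqref{DiscreteEntropyConservation} converts the volume piece into differences of the potential $\varPhi$ from \eqref{EntropyFunctional1}; since the rows of $\mathcal{Q}$ sum to zero while its columns sum to the entries of $\mathcal{B}$, this piece collapses, and after cancelling the $\mathbf{W}^{T}\mathbf{U}$ contributions against the boundary piece the whole temporal volume term reduces to $\left.\langle S, J\rangle_N\right|_{-1}^{1}$, the discrete analogue of $\int_\Omega (s(T)-s(0))\dV$ on that slab. I expect this to be the technical heart of the argument, as it is precisely the novel temporal counterpart of the established spatial manipulation.

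Next I would dispatch the spatial part \eqref{SpatialPart}. Invoking the already-established identity \eqref{EntropyVolumeContribution}, the spatial volume contribution becomes the discrete boundary entropy flux, so that $A_S(\blockvec{\mathbf{F}},\mathbf{W})$ is purely a sum of surface integrals. Collecting the two elements meeting at each interior spatial face, and using that the contravariant numerical flux \eqref{ContravariantSurfaceFlux} is single-valued while the local fluxes and entropy fluxes appear with opposite outward normals, the per-face sum reproduces, up to an overall sign, the integrand of \eqref{SurfaceEntropyCondition}. With periodic boundary conditions in space there are no physical spatial faces, so the entire spatial contribution vanishes for the entropy-conservative fluxes and is sign-definite for the entropy-stable fluxes, the matrix dissipation in \eqref{CartesianFluxesES} contributing $-\tfrac12\jump{\mathbf{W}}^{T}\mathcal{R}|\Lambda|\mathcal{T}\mathcal{R}^{T}\jump{\mathbf{W}}\le 0$ per face.

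It then remains to handle the temporal surface term and to telescope in time. Summing $\left.\langle S,J\rangle_N\right|_{-1}^{1}$ over slabs leaves the final-time entropy \eqref{DisreteEntropyIntegral1}, minus the bottom entropy of the first slab, plus a jump $-\langle\jump{S},J\rangle_N$ at each interior temporal interface. At such an interface the upwind choice $\mathbf{U}^{*}=\mathbf{U}_-$ makes the surface term from the lower slab vanish identically and turns the one from the upper slab into $\langle J\,\mathbf{W}_+^{T}\jump{\mathbf{U}},1\rangle_N$; combining this with $-\langle\jump{S},J\rangle_N$ and rewriting through $\varPhi$ gives exactly $\langle J(\jump{\varPhi}-\jump{\mathbf{W}}^{T}\mathbf{U}^{*}),1\rangle_N$, which is nonnegative by the stability assumption \eqref{TemporalStability1}. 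At the terminal time the flow is purely outflow in time, so $\mathbf{U}^{*}$ equals the interior state and the surface term drops, leaving precisely $\bar S(\mathbf{U}(T))$. At the initial time the Dirichlet state $\mathbf{U}^{*}=\mathbf{u}(0)$ is prescribed, and here I would invoke the strong convexity of $s$ underlying the Legendre pairing \eqref{eq:phiPsi}: the nodal contribution $\varPhi(\mathbf{U}_0)-\mathbf{W}(\mathbf{U}_0)^{T}\mathbf{u}(0)$ is bounded below by $-s(\mathbf{u}(0))$ via the tangent-plane inequality for the convex $s$, recovering $-\bar S(\mathbf{u}(0))$ through \eqref{DisreteEntropyIntegral2}.

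Assembling the pieces, the summed identity $\sum_{n,k}(A_T+A_S)=0$ rearranges into $\bar S(\mathbf{U}(T)) + \mathcal{I}_0 + \mathcal{I}_{\mathrm{int}} = -\sum_{n,k}A_S$, where $\mathcal{I}_0$ is the initial-time contribution and $\mathcal{I}_{\mathrm{int}}\ge 0$ collects the interior temporal interface terms. Because the collected spatial surface sum satisfies $\sum_{n,k}A_S\ge 0$ (vanishing for the conservative fluxes and nonnegative from the matrix dissipation otherwise), the right-hand side is $\le 0$; using $\mathcal{I}_0\ge -\bar S(\mathbf{u}(0))$ and $\mathcal{I}_{\mathrm{int}}\ge 0$ then yields $\bar S(\mathbf{U}(T))\le \bar S(\mathbf{u}(0))$, the asserted entropy stability. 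The subtle points to watch are the two sign conventions: the orientation of the temporal normal that selects $\mathbf{U}_-$ as the upwind (past) state, and the outward-normal bookkeeping that flips the sign when matching $\sum_{n,k}A_S$ to \eqref{SurfaceEntropyCondition}, since a slip in either would reverse the final inequality.
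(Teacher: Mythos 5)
Your proposal is correct and follows essentially the same route as the paper: test with $\mathbf{W}$, reduce the temporal volume term to $\left.\left\langle S,J\right\rangle_{\!N}\right|_{-1}^{\,1}$ via the SBP property and \eqref{DiscreteEntropyConservation} (this is exactly the paper's Appendix \ref{sec:App C} identity \eqref{TemporalStability2}), telescope in time using \eqref{TemporalStability1} at the interior temporal interfaces, and collapse the spatial part through \eqref{EntropyVolumeContribution} and \eqref{SurfaceEntropyCondition} with periodic boundary conditions. Your only departure is at the initial interface, where you invoke the tangent-plane inequality for the convex $s$ rather than re-applying \eqref{TemporalStability1} after adding and subtracting $\bar{S}\left(\mathbf{u}\left(0\right)\right)$ as the paper does in \eqref{SummationTemporal3}; since $\jump{\varPhi}-\jump{\mathbf{W}}^{T}\mathbf{U}_{-}=s\left(\mathbf{U}_{-}\right)-s\left(\mathbf{U}_{+}\right)-\mathbf{W}_{+}^{T}\left(\mathbf{U}_{-}-\mathbf{U}_{+}\right)$, the two arguments are equivalent.
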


\begin{proof}
We choose $\boldsymbol{\varphi}=\mathbf{W}$ as test function in the equation \eqref{SpaceTimeDG1} and sum over all space-time elements. This results in the identity
\begin{equation}\label{EntropyStability1}
\sum_{n=1}^{K_{T}}\sum_{k=1}^{K_{S}}\left(A_{T}\!\left(\mathbf{U}^{n,k},\mathbf{W}^{n,k}\right)+A_{S}\!\left(\blockvec{\mathbf{F}}{}^{n,k},\mathbf{W}^{n,k}\right)\right)=0. 
\end{equation}

In Appendix \ref{sec:App C} we prove the following equation:
\begin{align}\label{TemporalStability2}
\begin{split}
\sum_{n=1}^{K_{T}}\sum_{k=1}^{K_{S}}A_{T}\!\left(\mathbf{U}^{n,k},\mathbf{W}^{n,k}\right)
=&  \quad \bar{S}\left(\mathbf{U}\left(T\right)\right)  
+  \sum_{k=1}^{K_{S}}\left.\left\langle \left(\mathbf{W}^{K_{T},k}\right)^{T}\left(\mathbf{U}^{K_{T},k,*}-\mathbf{U}^{K_{T},k}\right),J^{k}\right\rangle_{\!N}\right|_{\tau=1} \\
&  \quad \qquad \qquad -\sum_{n=2}^{K_{T}}\sum_{k=1}^{K_{S}}\left.\left\langle \jump{\mathbf{W}^{n,k}}^{T}\mathbf{U}^{n,k,*}-\jump{\varPhi^{n,k}},J^{k}\right\rangle_{\!N}\right|_{\tau=-1} \\
& -\bar{S}\left(\mathbf{U}\left(0\right)\right)-\sum_{k=1}^{K_{S}}\left.\left\langle \left(\mathbf{W}^{1,k}\right)^{T}\left(\mathbf{U}^{1,k,*}-\mathbf{U}^{1,k}\right),J^{k}\right\rangle_{\!N}\right|_{\tau=-1}, 
\end{split}
\end{align}
where the quantity $\bar{S}\left(\mathbf{U}\left(T\right)\right)$ is defined in \eqref{DisreteEntropyIntegral1} and the quantity $\bar{S}\left(\mathbf{U}\left(0\right)\right)$ is given by 
\begin{equation}\label{DisreteEntropyIntegral3}
\bar{S}\left(\mathbf{U}\left(0\right)\right):=\sum_{k=1}^{K_{S}}\left.\left\langle s\left(\mathbf{U}^{1,k}\right),J^{k}\right\rangle_{\!N}\right|_{\tau=-1}.
\end{equation}
The state $\mathbf{U}^{*}$ is an upwind flux and satisfies $\left.\mathbf{U}^{K_{T},k,*}\right|_{\tau=1}=\left.\mathbf{U}^{K_{T},k}\right|_{\tau=1}$. Thus, we obtain 
\begin{equation}\label{SummationTemporal1}
\sum_{k=1}^{K_{S}}\left.\left\langle \left(\mathbf{W}^{K_{T},k}\right)^{T}\left(\mathbf{U}^{K_{T},k,*}-\mathbf{U}^{K_{T},k}\right),J^{k}\right\rangle_{\!N}\right|_{\tau=1}=0.
\end{equation} 
Likewise, it follows from \eqref{TemporalStability1} that
\begin{equation}\label{SummationTemporal2}
-\sum_{n=2}^{K_{T}}\sum_{k=1}^{K_{S}}\left.\left\langle \jump{\mathbf{W}^{n,k}}^{T}\mathbf{U}^{n,k,*}-\jump{\varPhi^{n,k}},J^{k}\right\rangle_{\!N}\right|_{\tau=-1} \geq 0.
\end{equation}
Furthermore, we add and subtract $\bar{S}\left(\mathbf{u}\left(0\right)\right)$ and the quantity $\mathbf{W}^{T}\mathbf{U}^*$ from the outside of the first temporal element to the last line in equation \eqref{TemporalStability2}. The upwind flux is defined as $\left.\mathbf{U}^{1,k,*}\right|_{\tau=-1}=\left.\mathbf{u}^{1,k}\right|_{\tau=-1}$ in the first temporal element. Therefore, we obtain by \eqref{TemporalStability1} the inequality 
\begin{align}\label{SummationTemporal3}
\begin{split}
&  -\bar{S}\left(\mathbf{U}\left(0\right)\right)-\sum_{k=1}^{K_{S}}\left.\left\langle \left(\mathbf{W}^{1,k}\right)^{T}\left(\mathbf{U}^{1,k,*}-\mathbf{U}^{1,k}\right),J^{k}\right\rangle_{\!N}\right|_{\tau=-1}  \\
=& -\bar{S}\left(\mathbf{u}\left(0\right)\right)+\sum_{k=1}^{K_{S}}\left.\left\langle \jump{\varPhi^{1,k}}-\jump{\mathbf{W}^{1,k}}^{T}\mathbf{u}^{1,k},J^{k}\right\rangle _{\!N}\right|_{\tau=-1}\geq -\bar{S}\left(\mathbf{u}\left(0\right)\right),
\end{split}
\end{align}  
where the quantity $\bar{S}\left(\mathbf{u}\left(0\right)\right)$ is defined in \eqref{DisreteEntropyIntegral2}. 
Overall, we obtain the following inequality for the temporal part of the space-time DGSEM  
\begin{equation}\label{TemporalStability2.2}
\bar{S}\left(\mathbf{U}\left(T\right)\right)-\bar{S}\left(\mathbf{u}\left(0\right)\right)\leq\sum_{n=1}^{K_{T}}\sum_{k=1}^{K_{S}}A_{T}\!\left(\mathbf{U}^{n,k},\mathbf{W}^{n,k}\right).
\end{equation}

For the spatial part of the space-time DGSEM, we apply the identity \eqref{EntropyVolumeContribution} and obtain 
\begin{align}\label{SpatialStability1}
\begin{split}
A_{S}\!\left(\blockvec{\mathbf{F}},\mathbf{W}\right) 
=& \quad \frac{\Delta t}{2}\left\langle \Dprojection{N}\cdot\blockvec{\tilde{\mathbf{F}}}{}^{\text{EC}},\mathbf{W}\right\rangle_{\!N\times M}+\frac{\Delta t}{2}\int\limits _{\partial E^{3},N}\left\langle \mathbf{W}^{T}\left(\tilde{\mathbf{F}}_{\hat{n}}^{*}-\tilde{\mathbf{F}}_{\hat{n}}\right),1\right\rangle_{\!M}\dS \\
=& \quad \frac{\Delta t}{2}\int\limits _{\partial E^{3},N}\left\langle \mathbf{W}^{T}\tilde{\mathbf{F}}_{\hat{n}}^{*}-\mathbf{W}^{T}\tilde{\mathbf{F}}_{\hat{n}}+\tilde{F}_{\hat{n}}^{s},1\right\rangle_{\!M}\dS.
\end{split}
\end{align}
By summing the contribution \eqref{SpatialStability1} over all space-time elements, we obtain 
\begin{align}\label{SpatialStability2}
\begin{split}
& 
\quad \sum_{n=1}^{K_{T}}\sum_{k=1}^{K_{S}}A_{S}\!\left(\blockvec{\mathbf{F}}{}^{n,k},\mathbf{W}^{n,k}\right)
 \\  
 =& \quad \textbf{BC}-\sum_{n=1}^{K_{T}}\underset{\text{faces}}{\sum_{\text{Interior}}}\frac{\Delta t^{n}}{2}\int\limits_{\partial E^{3},N}\left\langle\jump{\mathbf{W}^{n,k}}^{T}\tilde{\mathbf{F}}_{\hat{n}}^{n,k,*}-\jump{\left(\mathbf{W}^{n,k}\right)^{T}\tilde{\mathbf{F}}_{\hat{n}}^{n,k}}+\jump{\tilde{F}_{\hat{n}}^{n,k,s}},1\right\rangle_{\!M}\dS,  
\end{split}
\end{align}
where the contribution from the physical boundary terms is compactly given by  
\begin{equation}
\textbf{BC}:=\sum_{n=1}^{K_{T}}\underset{\text{faces}}{\sum_{\text{Boundary}}}\frac{\Delta t^{n}}{2}\int\limits_{\partial E^{3},N}\left\langle \left(\mathbf{W}^{n,k}\right)^{T}\tilde{\mathbf{F}}_{\hat{n}}^{n,k,*}-\left(\mathbf{W}^{n,k}\right)^{T}\tilde{\mathbf{F}}_{\hat{n}}+\tilde{F}_{\hat{n}}^{n,k,s},1\right\rangle_{\!M}\dS.
\end{equation}
Next, we apply the equation \eqref{SurfaceEntropyCondition} and obtain 
\begin{equation}
\sum_{n=1}^{K_{T}}\underset{\text{faces}}{\sum_{\text{Interior}}}\frac{\Delta t^{n}}{2}\int\limits_{\partial E^{3},N}\left\langle\jump{\mathbf{W}^{n,k}}^{T}\tilde{\mathbf{F}}_{\hat{n}}^{n,k,*}-\jump{\left(\mathbf{W}^{n,k}\right)^{T}\tilde{\mathbf{F}}_{\hat{n}}^{n,k}}+\jump{\tilde{F}_{\hat{n}}^{n,k,s}},1\right\rangle_{\!M}\dS=0.
\end{equation}
Thus, it follows 
\begin{equation}\label{SpatialStability2.1}
\sum_{n=1}^{K_{T}}\sum_{k=1}^{K_{S}}A_{S}\!\left(\blockvec{\mathbf{F}}{}^{n,k},\mathbf{W}^{n,k}\right)=\textbf{BC}. 
\end{equation}

Moreover, when the contravariant surface numerical fluxes are computed with the entropy stable Cartesian fluxes \eqref{CartesianFluxesES}, the equation \eqref{SpatialStability2} provides 
\begin{equation}\label{SpatialStability2a}
\sum_{n=1}^{K_{T}}\sum_{k=1}^{K_{S}}A_{S}\!\left(\blockvec{\mathbf{F}}{}^{n,k},\mathbf{W}^{n,k}\right)
\geq \textbf{BC}, 
\end{equation}

Finally, we obtain the discrete entropy inequality        
\begin{equation}\label{eq:finalResultEC}
\bar{S}\left(\mathbf{U}\left(T\right)\right)
\leq
\bar{S}\left(\mathbf{u}\left(0\right)\right)-\textbf{BC},
\end{equation}  
by \eqref{EntropyStability1}, \eqref{TemporalStability2.2}  and \eqref{SpatialStability2.1} (or alternative \eqref{SpatialStability2a}). Here, we consider a periodic problem in the three spatial directions. So, the physical boundary terms cancel and $\textbf{BC}=0$ \cite{Gassner2017} yielding
\begin{equation}\label{eq:discreteECInq}
\bar{S}\left(\mathbf{U}\left(T\right)\right)\leq\bar{S}\left(\mathbf{u}\left(0\right)\right).
\end{equation}
\end{proof}
\begin{remark}
We directly see that the result \eqref{eq:finalResultEC} in the proof of Theorem 1 is the discrete analogue of the continuous analysis which gave \eqref{eq:ourEndGoal}.
\end{remark}

\subsection{Discrete entropy preservation}\label{sec:EntropyPreservation}
The particular choice of periodic boundary and entropy conservative spatial numerical fluxes 
provide the identity  
\begin{equation}\label{EC:Equation1}
\sum_{n=1}^{K_{T}}\sum_{k=1}^{K_{S}}A_{S}\!\left(\blockvec{\mathbf{F}},\mathbf{W}\right)=0
\end{equation}
as we can extract from the proof of Theorem 1.  
Next, we investigate the space-time DGSEM with periodic boundary conditions in time 
\begin{equation}\label{EC:Equation3}
\mathbf{U}_{+}^{K_{T},k}=\mathbf{U}_{-}^{1,k},\qquad\mathbf{U}_{+}^{1,k}=\mathbf{U}_{-}^{K_{T},k},\qquad k=1,\dots,K_{S} 
\end{equation}
and numerical state functions, denoted with a \#, that satisfy the property \eqref{DiscreteEntropyConservation}. Similar to the construction of \eqref{TemporalStability2} we derive the equality 
\begin{align}\label{TemporalStabilityEC}
\begin{split}
\sum_{n=1}^{K_{T}}\sum_{k=1}^{K_{S}}A_{T}\!\left(\mathbf{U}^{n,k},\mathbf{W}^{n,k}\right)
=&  \quad \bar{S}\left(\mathbf{U}\left(T\right)\right)  
+  \sum_{k=1}^{K_{S}}\left.\left\langle \left(\mathbf{W}^{K_{T},k}_{-}\right)^{T}\left(\mathbf{U}^{K_{T},k,\#}-\mathbf{U}^{K_{T},k}_{-}\right),J^{k}\right\rangle_{\!N}\right|_{\tau=1} \\
&  \quad \qquad \qquad -\sum_{n=2}^{K_{T}}\sum_{k=1}^{K_{S}}\left.\left\langle \jump{\mathbf{W}^{n,k}}^{T}\mathbf{U}^{n,k,\#}-\jump{\varPhi\left(\mathbf{U}^{n,k}\right)},J^{k}\right\rangle_{\!N}\right|_{\tau=-1} \\
& -\bar{S}\left(\mathbf{U}\left(0\right)\right)-\sum_{k=1}^{K_{S}}\left.\left\langle \left(\mathbf{W}^{1,k}_{+}\right)^{T}\left(\mathbf{U}^{1,k,\#}-\mathbf{U}^{1,k}_{+}\right),J^{k}\right\rangle_{\!N}\right|_{\tau=-1}.
\end{split}
\end{align}
From the property \eqref{DiscreteEntropyConservation} of the numerical state functions the middle term of \eqref{TemporalStabilityEC} vanishes, i.e., 
\begin{equation}\label{EC:Equation4}
-\sum_{n=2}^{K_{T}}\sum_{k=1}^{K_{S}}\left.\left\langle \jump{\mathbf{W}^{n,k}}^{T}\mathbf{U}^{n,k,\#}-\jump{\varPhi^{n,k}},J^{k}\right\rangle_{\!N}\right|_{\tau=-1}=0. 
\end{equation}
The remaining boundary terms in \eqref{TemporalStabilityEC} cancel by the prescription of the boundary conditions \eqref{EC:Equation3} and we find that
\begin{equation}
\sum_{n=1}^{K_{T}}\sum_{k=1}^{K_{S}}A_{T}\!\left(\mathbf{U}^{n,k},\mathbf{W}^{n,k}\right) = 0.
\end{equation}

Therefore, periodic boundary conditions in time are not an appropriate choice. Next, we consider Dirichlet boundary conditions in time and apply temporal numerical state $\mathbf{U}^{*}$ functions with the properties:
\begin{align}
\left.\mathbf{U}^{1,k,\#}\right|_{\tau=-1}=\left.\mathbf{U}_{-}^{1,k}\right|_{\tau=-1}=\left.\mathbf{u}^{1,k}\right|_{\tau=-1},
\qquad\left.\mathbf{U}^{K_{T},k,\#}\right|_{\tau=1}=\left.\mathbf{U}_{-}^{K_{T},k}\right|_{\tau=1}, \quad k=1,\dots,K_{S}, \label{EntropyPreservation1}  \\ 
\nonumber \\
\jump{\mathbf{W}^{n,k}}^{T}\mathbf{U}^{n,k,\#}=\jump{\varPhi^{n,k}}, \quad  n=2,\dots,K_{T}-1, \quad k=1,\dots,K_{S}.  \label{EntropyPreservation2}
\end{align}
Again, the middle term of \eqref{TemporalStabilityEC} vanishes because of the property \eqref{EC:Equation4} by \eqref{EntropyPreservation2}. Furthermore, we see that
\begin{equation}\label{eq:tempStabTEC}
 \sum_{k=1}^{K_{S}}\left.\left\langle \left(\mathbf{W}^{K_{T},k}_{-}\right)^{T}\left(\mathbf{U}^{K_{T},k,\#}-\mathbf{U}^{K_{T},k}_{-}\right),J^{k}\right\rangle_{\!N}\right|_{\tau=1}=0,
\end{equation}
due to the choice of the upwind state \eqref{EntropyPreservation1} and we obtain, 
similar to \eqref{SummationTemporal3}, that
\begin{align}\label{SummationTemporal4}
\begin{split}
&  -\bar{S}\left(\mathbf{U}\left(0\right)\right)-\sum_{k=1}^{K_{S}}\left.\left\langle \left(\mathbf{W}^{1,k}\right)^{T}\left(\mathbf{U}^{1,k,\#}-\mathbf{U}_{+}^{1,k}\right),J^{k}\right\rangle_{\!N}\right|_{\tau=-1}  \\
=& -\bar{S}\left(\mathbf{u}\left(0\right)\right)+\sum_{k=1}^{K_{S}}\left.\left\langle \jump{\varPhi^{1,k}}-\jump{\mathbf{W}^{1,k}}^{T}\mathbf{u}^{1,k},J^{k}\right\rangle _{\!N}\right|_{\tau=-1}.
\end{split}
\end{align} 
Thus, we obtain by \eqref{TemporalStabilityEC}, \eqref{eq:tempStabTEC} and \eqref{SummationTemporal4} the identity 
\begin{align}\label{EC:Equation5}
\begin{split}
\sum_{n=1}^{K_{T}}\sum_{k=1}^{K_{S}}A_{T}\!\left(\mathbf{U}^{n,k},\mathbf{W}^{n,k}\right)=& \quad 
\bar{S}\left(\mathbf{U}\left(T\right)\right)-\bar{S}\left(\mathbf{u}\left(0\right)\right) \\
& +\sum_{k=1}^{K_{S}}\left.\left\langle \jump{\varPhi^{1,k}}-\jump{\mathbf{W}^{1,k}}^{T}\mathbf{u}^{1,k},J^{k}\right\rangle _{\!N}\right|_{\tau=-1}.
\end{split}
\end{align}
The equations \eqref{EC:Equation1} and \eqref{EC:Equation5} provide the identity 
\begin{align}\label{EC:EntropyPreservation}
\begin{split}
0=&\quad \sum_{n=1}^{K_{T}}\sum_{k=1}^{K_{S}}\left(A_{T}\!\left(\mathbf{U}^{n,k},\mathbf{W}^{n,k}\right)+ A_{S}\!\left(\blockvec{\mathbf{F}},\mathbf{W}\right)\right) \\
=& \quad  \bar{S}\left(\mathbf{U}\left(T\right)\right)-\bar{S}\left(\mathbf{u}\left(0\right)\right)
+\sum_{k=1}^{K_{S}}\left.\left\langle \jump{\varPhi^{1,k}}-\jump{\mathbf{W}^{1,k}}^{T}\mathbf{u}^{1,k},J^{k}\right\rangle _{\!N}\right|_{\tau=-1}.
\end{split}
\end{align}
This equation is a discrete version of the continuous entropy equation \eqref{eq:ContinuousEntropyEquation}. In general the sum on the right hand side in equation \eqref{EC:EntropyPreservation} does not vanish, but the contribution can be interpreted as a projection error that is, in general, small \cite{CHQZ:2006}. Hence, we have proven the following statement.          
\begin{thm}[Entropy preservation]\label{Theorem:EntropyPreservation}
Consider the space-time DGSEM with Dirichlet boundary conditions in time and periodic boundary conditions in space. Assume that the temporal numerical states $\mathbf{U}^{*}$ have the properties \eqref{EntropyPreservation1} and \eqref{EntropyPreservation2}. The spatial numerical surface fluxes $\tilde{\mathbf{F}}_{\hat{n}}^{*}$ are computed by \eqref{ContravariantSurfaceFlux}. 
Then the space-time DGSEM is an entropy preserving method in the scene that the equation \eqref{EC:EntropyPreservation} is satisfied.    
\end{thm}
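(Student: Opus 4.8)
The plan is to assemble \eqref{EC:EntropyPreservation} directly from the block identities that the preceding paragraphs have already prepared, since the theorem merely packages that derivation into a single relation. First I would start from the global variational identity obtained by taking $\boldsymbol{\varphi}=\mathbf{W}$ in \eqref{SpaceTimeDG1} and summing over all space-time elements, exactly as in \eqref{EntropyStability1}, so that
\[
\sum_{n=1}^{K_{T}}\sum_{k=1}^{K_{S}}\left(A_{T}\!\left(\mathbf{U}^{n,k},\mathbf{W}^{n,k}\right)+A_{S}\!\left(\blockvec{\mathbf{F}}{}^{n,k},\mathbf{W}^{n,k}\right)\right)=0 .
\]
This is the discrete analogue of contracting the conservation law with the entropy variables and integrating over the full space-time cylinder; everything else is bookkeeping on the two summands.

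Next I would dispatch the spatial part by invoking \eqref{EC:Equation1}: with periodic spatial boundaries and the entropy conservative surface fluxes \eqref{ContravariantSurfaceFlux}, the summed spatial form vanishes. This is inherited from the proof of Theorem~1, where the volume contribution collapses to a boundary entropy flux via \eqref{EntropyVolumeContribution}, the interior face terms telescope to zero by \eqref{SurfaceEntropyCondition}, and the physical boundary term $\textbf{BC}$ is zero under periodicity.

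For the temporal part I would reproduce the decomposition \eqref{TemporalStabilityEC} and then exploit the two prescribed properties of the $\#$-states. The entropy conservation property \eqref{EntropyPreservation2} annihilates the interior temporal-interface sum through \eqref{EC:Equation4}; the upwind/consistency choice at the final time level in \eqref{EntropyPreservation1} kills the $\tau=1$ boundary term via \eqref{eq:tempStabTEC}; and the initial-time treatment \eqref{SummationTemporal4} leaves precisely the projection residual in $\varPhi^{1,k}$ and $\mathbf{u}^{1,k}$. This assembles the temporal identity \eqref{EC:Equation5}. Substituting the vanishing spatial sum and \eqref{EC:Equation5} into the global identity above then produces \eqref{EC:EntropyPreservation}.

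The genuinely delicate step is the temporal boundary bookkeeping in \eqref{TemporalStabilityEC}: one must telescope the time-interface terms across the Dirichlet-in-time stack so that interior interfaces cancel \emph{exactly}, which requires the entropy conservative $\#$-state rather than merely the upwind stability bound of Theorem~1, while carefully separating the final time level (handled by consistency of the upwind state) from the initial time level, where the prescribed datum $\mathbf{u}^{1,k}$ must be inserted and leaves the irreducible projection-error term. The conceptual point worth stressing is that replacing the upwind temporal state of Theorem~1 by the $\#$-state upgrades the inequality \eqref{SummationTemporal2} to the equality \eqref{EC:Equation4}, and this is exactly what converts entropy stability into entropy preservation.
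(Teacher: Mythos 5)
Your proposal is correct and follows essentially the same route as the paper: the paper's proof of Theorem~\ref{Theorem:EntropyPreservation} is precisely the derivation in Section~\ref{sec:EntropyPreservation}, which combines the vanishing spatial sum \eqref{EC:Equation1} with the temporal decomposition \eqref{TemporalStabilityEC}, uses \eqref{EntropyPreservation2} to obtain \eqref{EC:Equation4}, \eqref{EntropyPreservation1} to obtain \eqref{eq:tempStabTEC}, and \eqref{SummationTemporal4} for the initial-time projection residual, exactly as you describe. Your closing observation that the $\#$-state upgrades the inequality \eqref{SummationTemporal2} to the equality \eqref{EC:Equation4} is also the correct identification of the essential difference from Theorem~1.
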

We note that with the temporal states defined by \eqref{EntropyPreservation2} it is possible to demonstrate entropy conservation of the space-time DG scheme. However, these temporal states are not a practicable choice for simulations because they fully couple the space-time slabs. Therefore, the only practical choice is the upwind temporal states $\mathbf{U}^{*}=\mathbf{U}_{-}$, which are entropy stable as shown in Theorem 1, as they provide the weakest coupling possible between the time slabs.

\section{The compressible Euler equations}\label{Sec:Euler}
As a flagship example for the temporal entropy analysis we consider the three dimensional compressible Euler equations
\begin{equation}\label{eq:Euler}
\pderivative{\vec{u}}{t} + \vv{\nabla} \cdot \blockvec{\vec{f}}=0, 
\end{equation}
with $\vec{u}:=\left(\rho,\rho\vv{v}, E\right)^{T}$, 
$\blockvec{\vec{f}}=\left(\vec{f}_{1},\vec{f}_{2},\vec{f}_{3}\right)^T$ and
\begin{equation}\label{eq:EulerFluxes}
\vec{f}_{1}=\left(\begin{array}{c}
\rho v_{1}\\
\rho v_{1}^{2}+p\\
\rho v_{1}v_{2}\\
\rho v_{2}v_{3}\\
\left(E+p\right)v_{1}
\end{array}\right),\qquad\vec{f}_{2}=\left(\begin{array}{c}
\rho v_{2}\\
\rho v_{1}v_{2}\\
\rho v_{2}^{2}+p\\
\rho v_{2}v_{3}\\
\left(E+p\right)v_{2}
\end{array}\right),\qquad\vec{f}_{3}=\left(\begin{array}{c}
\rho v_{3}\\
\rho v_{1}v_{3}\\
\rho v_{2}v_{3}\\
\rho v_{3}^2+p\\
\left(E+p\right)v_{3}
\end{array}\right).
\end{equation}   
The conserved states are the density $\rho$, the fluid velocities $\vv{v}=(v_1\,,\,v_2\,,\,v_3)$ and the total energy $E$. In order to close the system, we assume an ideal gas such that the pressure is defined as
\begin{equation}
p = (\gamma-1)\left(E - \frac{\rho}{2}\left|\vv{v}\right|^2\right),
\end{equation}  
where $\gamma$ is the adiabatic constant.
\subsection{Euler State Values in Time}\label{sec:EulerStuff}
Here we focus on the temporal entropy analysis. Complete details on the spacial entropy stability analysis for the Euler equations can be found in, e.g., \cite{carpenter_esdg,Chandrashekar2012,Gassner2017,IsmailRoe2009}. 
\begin{thm}[Entropy conservative temporal Euler state]
From the entropy conservation condition in time 
\begin{equation}\label{eq:condInThm}
\jump{\vec{W}}^T\vec{U}^{\#} = \jump{\Phi},
\end{equation}
we derive the temporal state for the Euler equations to be
\begin{equation}\label{Euler:ECStateInThm}
\vec{U}^\# =
\begin{bmatrix}
\rho^{\ln}\\[0.1cm]
\rho^{\ln}{\avg{v_1}}\\[0.1cm]
\rho^{\ln}{\avg{v_2}}\\[0.1cm]
\rho^{\ln}{\avg{v_3}}\\[0.1cm]
\frac{\rho^{\ln}}{2\beta^{\ln}(\gamma-1)}+\rho^{\ln}\left(\avg{v_1}^2+\avg{v_2}^2+\avg{v_3}^2-\frac{1}{2}\left(\avg{v_1^2}+\avg{v_2^2}+\avg{v_3^2}\right)\right)\\[0.1cm]
\end{bmatrix}
\end{equation}
with the arithmetic mean \eqref{SurfaceJumpMean} and introducing the logarithmic mean 
\begin{equation}\label{eq:logMean}
(\cdot)^{\ln} = \frac{\jump{\cdot}}{\jump{\ln(\cdot)}}.
\end{equation}
\end{thm}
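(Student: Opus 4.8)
The plan is to treat the scalar condition \eqref{eq:condInThm} not as a single equation but as an identity that must hold for \emph{every} pair of neighbouring temporal states; once both sides are expanded in a common basis of elementary jumps this collapses into a determined linear system for the five components of $\vec{U}^\#$. First I would record the ingredients for the Euler system: the entropy variables, written with the inverse temperature $\beta = \rho/(2p)$,
\begin{equation*}
\vec{W} = \left(\frac{\gamma-\varsigma}{\gamma-1} - \beta\left|\vv{v}\right|^2,\ 2\beta v_1,\ 2\beta v_2,\ 2\beta v_3,\ -2\beta\right)^{T}, \qquad \varsigma := \ln(p) - \gamma\ln(\rho),
\end{equation*}
together with the entropy potential $\Phi = \vec{W}^T\vec{U} - S$. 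A short computation using $E = p/(\gamma-1) + \tfrac{\rho}{2}\left|\vv{v}\right|^2$ and $\rho/(2\beta) = p$ collapses this potential to $\Phi = \rho$, so that the right-hand side of \eqref{eq:condInThm} is simply $\jump{\Phi} = \jump{\rho}$. This identification is the structural simplification that makes the whole derivation tractable.

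Next I would choose $(\rho, v_1, v_2, v_3, \beta)$ as independent parameter variables and expand $\jump{\vec{W}}$ using the jump product rules $\jump{ab} = \avg{a}\jump{b} + \avg{b}\jump{a}$ and $\jump{a^2} = 2\avg{a}\jump{a}$. The components $\jump{W_2},\jump{W_3},\jump{W_4}$ and $\jump{W_5}$ produce only the linear jumps $\jump{v_i}$ and $\jump{\beta}$, but $\jump{W_1}$ additionally carries logarithmic contributions $\jump{\ln\rho}$ and $\tfrac{1}{\gamma-1}\jump{\ln\beta}$ that appear once $\varsigma$ is rewritten via $p = \rho/(2\beta)$. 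The decisive step is to linearise these by the logarithmic-mean identity \eqref{eq:logMean}, i.e.\ $\jump{\ln a} = \jump{a}/a^{\ln}$, which turns them into $\jump{\rho}/\rho^{\ln}$ and $\jump{\beta}/\big((\gamma-1)\beta^{\ln}\big)$. After this substitution the entire left-hand side of \eqref{eq:condInThm} is a linear combination of the five elementary jumps $\jump{\rho},\jump{v_1},\jump{v_2},\jump{v_3},\jump{\beta}$ with coefficients built from arithmetic means and logarithmic means.

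Because \eqref{eq:condInThm} must hold for all states and these five jumps are mutually independent, I would then match the coefficient of each elementary jump on the two sides. The coefficient of $\jump{\rho}$ gives $U_1^\# = \rho^{\ln}$ at once; feeding this into the coefficients of the $\jump{v_i}$ forces the momentum entries $U_{i+1}^\# = \rho^{\ln}\avg{v_i}$; and finally the coefficient of $\jump{\beta}$ --- which gathers the term $\rho^{\ln}/\big((\gamma-1)\beta^{\ln}\big)$, the kinetic pieces $-\rho^{\ln}\sum_i\avg{v_i^2}$ and $2\rho^{\ln}\sum_i\avg{v_i}^2$, and the contribution $-2U_5^\#$ from $\jump{W_5}$ --- determines the energy entry and reproduces exactly the fifth component of \eqref{Euler:ECStateInThm}. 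The bookkeeping closes perfectly: five elementary jumps, five unknowns, no over- or under-determination.

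The main obstacle lies entirely in this logarithmic bookkeeping. The terms $\jump{\ln\rho}$ and $\tfrac{1}{\gamma-1}\jump{\ln\beta}$ in $\jump{W_1}$ have no linear counterpart on the right-hand side $\jump{\rho}$, so the identity can only be enforced once they are converted into ordinary jumps; this is precisely why the logarithmic mean \eqref{eq:logMean} is unavoidable and why $\rho^{\ln}$ and $\beta^{\ln}$ (rather than arithmetic means) surface in the density and pressure slots. As a final sanity check I would verify consistency by letting the two states coincide: then $\avg{\cdot}\to(\cdot)$, $\rho^{\ln}\to\rho$, $\beta^{\ln}\to\beta$, and using $\rho/(2\beta) = p$ the energy entry collapses to $p/(\gamma-1) + \tfrac{\rho}{2}\left|\vv{v}\right|^2 = E$, so that $\vec{U}^\#\to\vec{U}$, as required of a consistent, symmetric two-point state.
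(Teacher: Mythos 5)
Your proposal is correct and follows essentially the same route as the paper: expand $\jump{\vec{W}}$ via the jump product rules, convert $\jump{\ln\rho}$ and $\jump{\ln\beta}$ into ordinary jumps through the logarithmic mean, collect the coefficients of the independent elementary jumps $\jump{\rho},\jump{v_1},\jump{v_2},\jump{v_3},\jump{\beta}$, and solve the resulting five-by-five system, finishing with the consistency check. The only cosmetic difference is that you write $\varsigma=\ln p-\gamma\ln\rho$ while the paper uses the equivalent $\varsigma=-(\gamma-1)\ln\rho-\ln\beta-\ln 2$; the derivation and the resulting state are identical.
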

\begin{proof}
First, we collect the necessary quantities for the discrete temporal entropy analysis of the Euler equations:
\begin{equation}
\begin{aligned}\label{Euler:Quantities}
\vec{U} &= (\rho\,,\,\rho v_1\,,\,\rho v_2\,,\,\rho v_3\,,\,E)^T,\\[0.1cm]
\vec{W} &= \left(\frac{\gamma-\varsigma}{\gamma-1}-\beta \left|\vv{v}\right|^2\,,\,2\beta v_1\,,\,2\beta v_2\,,\,2\beta v_3\,,\,-2\beta\right)^T,\\[0.1cm]
s &= -\frac{\rho \varsigma}{\gamma-1},\\[0.1cm]
\Phi &= \rho,\\[0.1cm]
\end{aligned}
\end{equation} 
where $\varsigma = -(\gamma-1)\ln(\rho)-\ln(\beta)-\ln(2)$ and $\beta=\rho/(2p)$. 
In order to compute the Euler state function $\vec{U}^\#$, we will rearrange the equation \eqref{eq:condInThm} by algebraic manipulations. Therefore, the following two important properties of the jump operator are necessary   
\begin{equation}\label{eq:jumpProperties}
\jump{ab} = \avg{a}\jump{b} + \avg{b}\jump{a}
\qquad \text{and} \qquad \jump{a^2} = 2\avg{a}\jump{a}, 
\end{equation}
where $a$ and $b$ are given quantities. Forgoing some algebra we apply the identities \eqref{eq:jumpProperties} several times and introduce the logarithmic mean \eqref{eq:logMean} to find
\begin{equation}
\resizebox{\textwidth}{!}{$
\begin{aligned}
\jump{\rho}:& \frac{U^\#_1}{\rho^{\ln}} = 1\\[0.1cm]
\jump{v_1}:& -2U^\#_1\avg{v_1}\avg{\beta} + 2U^\#_2\avg{\beta} = 0\\[0.1cm]
\jump{v_2}:& -2U^\#_1\avg{v_2}\avg{\beta} + 2U^\#_3\avg{\beta} = 0\\[0.1cm]
\jump{v_3}:& -2U^\#_1\avg{v_3}\avg{\beta} + 2U^\#_4\avg{\beta} = 0\\[0.1cm]
\jump{\beta}:& -2U^\#_5+\frac{U^\#_1}{\beta^{\ln}(\gamma-1)}-U^\#_1\left(\avg{v_1^2}+\avg{v_2^2}+\avg{v_3^2}\right)+2U_2^\#\avg{v_1}+2U_3^\#\avg{v_2}+2U_4^\#\avg{v_3}=0.
\end{aligned}
$}
\end{equation}
We solve to find the entropy conservative state function in time to be
\begin{equation}\label{Euler:ECState}
\vec{U}^\# =
\begin{bmatrix}
\rho^{\ln}\\[0.1cm]
\rho^{\ln}{\avg{v_1}}\\[0.1cm]
\rho^{\ln}{\avg{v_2}}\\[0.1cm]
\rho^{\ln}{\avg{v_3}}\\[0.1cm]
\frac{\rho^{\ln}}{2\beta^{\ln}(\gamma-1)}+\rho^{\ln}\left(\avg{v_1}^2+\avg{v_2}^2+\avg{v_3}^2-\frac{1}{2}\left(\avg{v_1^2}+\avg{v_2^2}+\avg{v_3^2}\right)\right)\\[0.1cm]
\end{bmatrix}.
\end{equation}
We note that $\vec{U}^\#$ is symmetric with respect to its arguments and is consistent, as taking the left and right states to be the same gives
\begin{equation}
\vec{U}^\# =
\begin{bmatrix}
\rho\\[0.1cm]
\rho v_1\\[0.1cm]
\rho v_2\\[0.1cm]
\rho v_3\\[0.1cm]
\frac{\rho}{2\beta(\gamma-1)}+\frac{\rho}{2}\left|\vv{v}\right|^2\\[0.1cm]
\end{bmatrix}
=
\begin{bmatrix}
\rho\\[0.1cm]
\rho v_1\\[0.1cm]
\rho v_2\\[0.1cm]
\rho v_3\\[0.1cm]
\frac{p}{(\gamma-1)}+\frac{\rho}{2}\left|\vv{v}\right|^2\\[0.1cm]
\end{bmatrix}
=
\begin{bmatrix}
\rho\\[0.1cm]
\rho v_1\\[0.1cm]
\rho v_2\\[0.1cm]
\rho v_3\\[0.1cm]
E\\[0.1cm]
\end{bmatrix}.
\end{equation}
Finally, a numerically stable procedure to compute the logarithmic mean \eqref{eq:logMean} is provided by Ismail and Roe \cite[Appendix B]{IsmailRoe2009}.
\end{proof}


The entropy conservative temporal state function $\vec{U}^\#$ couples all time slabs in the space-time DG scheme. Thus, it is not a computationally tractable option for approximating the solution of the Euler equations. As previously mentioned, the only feasible choice for a temporal numerical state is the pure upwind state, as it decouples the time slaps. Thus, we next show that the upwind numerical state satisfies \eqref{TemporalStability1} in the case of the Euler equations.

\begin{lemma}[Entropy stable temporal Euler state]
The upwind temporal state 
\begin{equation}\label{Euler:UpwindState}
\vec{U}^*=\begin{pmatrix}
\rhoM\\[0.1cm]
\rhoM\voneM\\[0.1cm]
\rhoM\vtwoM\\[0.1cm]
\rhoM\vthrM\\[0.1cm]
E_{-}\\[0.1cm]
\end{pmatrix},
\quad E_{-} = \frac{\rhoM}{2\betaM(\gamma-1)}+\frac{\rhoM}{2}\left(\voneM^2+\vtwoM^2+\vthrM^2\right),
\end{equation}
for the  compressible Euler equations satisfies the entropy stability condition in time
\begin{equation}\label{eq:EulerCondition}
\jump{\vec{W}}^T\vec{U}^*\leq \jump{\Phi}.
\end{equation}
\end{lemma}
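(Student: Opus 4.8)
The plan is to recognize that the upwind choice $\vec{U}^* = \vec{U}_-$ makes the inequality \eqref{eq:EulerCondition} an immediate consequence of the convexity of the entropy. Recall from \eqref{eq:phiPsi} and \eqref{EntropyFunctional1} that the entropy potential is $\Phi = \vec{W}^T\vec{U} - s$, which by the strong convexity of $s(\vec{u})$ is precisely the Legendre transform of the entropy. The single structural fact I would isolate first is that, when $\Phi$ is regarded as a function of the entropy variables $\vec{W}$ (see \eqref{eq:etnVar}), it is strongly convex and satisfies the duality relation $\partial\Phi/\partial\vec{W} = \vec{U}$. Indeed, writing $\Phi(\vec{W}) = \vec{W}^T\vec{U}(\vec{W}) - s(\vec{U}(\vec{W}))$ and differentiating, the two terms involving $\partial\vec{U}/\partial\vec{W}$ cancel because $\partial s/\partial\vec{U} = \vec{W}$, leaving $\partial\Phi/\partial\vec{W} = \vec{U}$; convexity then follows since the Hessian $\partial^2\Phi/\partial\vec{W}^2 = (\partial^2 s/\partial\vec{U}^2)^{-1}$ is positive definite.

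With this in hand, I would apply the first-order (tangent-plane) characterization of convexity to $\Phi(\vec{W})$, anchoring the supporting hyperplane at the primary ``$-$'' state and evaluating it at the ``$+$'' state:
\[
\Phi(\vec{W}_+) \geq \Phi(\vec{W}_-) + (\vec{W}_+ - \vec{W}_-)^T \left.\frac{\partial\Phi}{\partial\vec{W}}\right|_{-}.
\]
Since $\left.\partial\Phi/\partial\vec{W}\right|_{-} = \vec{U}_- = \vec{U}^*$ and $\jump{\cdot} = (\cdot)_+ - (\cdot)_-$, this rearranges to exactly $\jump{\vec{W}}^T\vec{U}^* \leq \jump{\Phi}$, which is the claim. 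As a sanity check I would also record that anchoring the hyperplane at the ``$+$'' state yields the complementary bound $\jump{\vec{W}}^T\vec{U}_+ \geq \jump{\Phi}$, so that the entropy conservative state $\vec{U}^\#$ of \eqref{Euler:ECState}, which satisfies $\jump{\vec{W}}^T\vec{U}^\# = \jump{\Phi}$ exactly by \eqref{eq:condInThm}, is sandwiched between the two one-sided states — the discrete reflection of a mean-value identity.

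The main obstacle is not the inequality itself but the hypothesis underlying it: the convexity of $\Phi$ in $\vec{W}$ is equivalent to the strong convexity of $s$ in $\vec{u}$, which for the Euler entropy collected in \eqref{Euler:Quantities} holds only under the physical admissibility conditions $\rho > 0$ and $p > 0$. I would therefore state these positivity assumptions explicitly and, if a self-contained verification is desired, confirm that $\partial^2 s/\partial\vec{u}^2$ is positive definite on the admissible set. As a fully explicit alternative that avoids invoking abstract convexity, one can instead substitute the closed forms of $\vec{W}$, $\vec{U}_-$, and $\Phi = \rho$ from \eqref{Euler:Quantities}, expand $\jump{\vec{W}}^T\vec{U}_-$ componentwise with the jump identities \eqref{eq:jumpProperties}, and compare against $\jump{\rho}$; using \eqref{eq:condInThm} this reduces to showing $\jump{\vec{W}}^T(\vec{U}^\# - \vec{U}_-) \geq 0$. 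I expect this direct route to be algebraically heavy and to reproduce, term by term, the same positive-definite quadratic structure that the convexity argument delivers in a single line, so I would present the convexity proof as primary and treat the explicit computation only as a remark.
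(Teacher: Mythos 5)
Your convexity argument is correct and is genuinely different from the paper's proof. The paper proceeds by brute force: it expands $\jump{\vec{W}}^T\vec{U}^*$ componentwise using the jump identities \eqref{eq:jumpProperties}, regroups the terms multiplying each of $\jump{\rho}$, $\jump{v_i}$, $\jump{\beta}$, and then bounds the result by $\jump{\rho}=\jump{\Phi}$ using the logarithmic--arithmetic mean inequality $a^{\ln}\leq\avg{a}$ and positivity of density and temperature. Your route — observing that $\Phi$ is the Legendre transform of $s$, so that $\partial\Phi/\partial\vec{W}=\vec{U}$ and $\Phi$ is convex in $\vec{W}$, and then writing the supporting-hyperplane inequality anchored at the ``$-$'' state — delivers the same conclusion in one line and, as you note, simultaneously explains why the analogous upwind results hold for the shallow water and MHD systems (the paper proves each of these by a separate explicit computation in the appendices). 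The one point you should make fully explicit is that the tangent-plane inequality between two arbitrary admissible states requires the admissible set to be convex \emph{in entropy-variable coordinates}; for the Euler entropy in \eqref{Euler:Quantities} this set is the half-space $\{W_5<0\}$ (equivalently $\beta>0$), so the segment joining $\vec{W}_-$ and $\vec{W}_+$ stays admissible and the argument is sound. What the paper's computation buys in exchange for its length is the explicit form of the dissipation: the chain of identities culminating in \eqref{eq:EulerFirstIIII} exhibits $\jump{\Phi}-\jump{\vec{W}}^T\vec{U}^*$ as an explicit sum of nonnegative quadratic terms in $\jump{\rho}$, $\jump{v_i}$, $\jump{\beta}$, which quantifies exactly how much entropy the upwind coupling dissipates at a temporal interface — information the abstract convexity argument does not provide. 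Your sandwich observation $\jump{\vec{W}}^T\vec{U}_-\leq\jump{\vec{W}}^T\vec{U}^{\#}=\jump{\Phi}\leq\jump{\vec{W}}^T\vec{U}_+$ is also correct and is a nice structural addendum not present in the paper.
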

\begin{proof}
First we compute the jump in the entropy variables
\begin{equation}
\resizebox{\textwidth}{!}{$
\jump{\vec{W}} 
=
	\begin{bmatrix}
	\frac{\jump{\rho}}{(\rho)^{\ln}}+\frac{\jump{\beta}}{\beta^{\ln}(\gamma-1)}-\Big(\avg{v_1^2}+\avg{v_2^2}+\avg{v_3^2}\Big)\jump{\beta}-2\avg{\beta}\Big( \avg{v_1}\jump{v_1} + \avg{v_2}\jump{v_2} + \avg{v_3}\jump{v_3} \Big) \\
	2 \avg{\beta}\jump{v_1} + 2 \avg{v_1}\jump{\beta} \\[0.1cm]
	2 \avg{\beta}\jump{v_2} + 2 \avg{v_2}\jump{\beta} \\[0.1cm]
	2 \avg{\beta}\jump{v_3} + 2 \avg{v_3}\jump{\beta} \\[0.1cm]
	-2 \jump{\beta} \\[0.1cm]
	\end{bmatrix}
	.$}
\end{equation}
Now we compute the left side of the condition \eqref{eq:EulerCondition} to be
\begin{equation}\label{eq:EulerFirst}
\resizebox{\textwidth}{!}{$
\begin{aligned}
\jump{\vec{W}}^T\vec{U}^* &= \rhoM\left(\frac{\jump{\rho}}{(\rho)^{\ln}}+\frac{\jump{\beta}}{\beta^{\ln}(\gamma-1)}-\Big(\avg{v_1^2}+\avg{v_2^2}+\avg{v_3^2}\Big)\jump{\beta}-2\avg{\beta}\Big( \avg{v_1}\jump{v_1} + \avg{v_2}\jump{v_2} + \avg{v_3}\jump{v_3} \Big)\right) + \rhoM\vM\left(2\avg{v}\jump{\beta}+2\avg{\beta}\jump{v}\right)-2E_{-}\jump{\beta}\\
&\quad+\rhoM\voneM\left(2 \avg{\beta}\jump{v_1} + 2 \avg{v_1}\jump{\beta}\right)+\rhoM\vtwoM\left(2 \avg{\beta}\jump{v_2} + 2 \avg{v_2}\jump{\beta}\right)+\rhoM\vthrM\left(2 \avg{\beta}\jump{v_3} + 2 \avg{v_3}\jump{\beta}\right)-2E_{-}\jump{\beta}\\[0.1cm]
&=\frac{\rhoM}{\rho^{\ln}}\jump{\rho} - 2\rhoM\avg{\beta}\jump{v_1}\left(\avg{v_1}-\voneM\right)- 2\rhoM\avg{\beta}\jump{v_2}\left(\avg{v_2}-\vtwoM\right)- 2\rhoM\avg{\beta}\jump{v_3}\left(\avg{v_3}-\vthrM\right) \\
&\quad + \jump{\beta}\left(\frac{\rhoM}{\beta^{\ln}(\gamma-1)}-\rhoM\avg{v_1^2}+2\rhoM\voneM\avg{v_1}-\rhoM\avg{v_2^2}+2\rhoM\vtwoM\avg{v_2}-\rhoM\avg{v_3^2}+2\rhoM\vthrM\avg{v_3}-2E_{-}\right).
\end{aligned}
$}
\end{equation}
First, we note for a given quantity $a$ we have
\begin{equation}\label{eq:avgProperty}
\avg{a} - a_{-} = \frac{1}{2}(a_{+}+a_{-}) - a_{-} = \frac{1}{2}\jump{a},
\end{equation}
and apply this finding on three terms in \eqref{eq:EulerFirst} as well as substitute the form of $E_{-}$ to obtain
\begin{equation}\label{eq:EulerFirstI}
\resizebox{\textwidth}{!}{$
\begin{aligned}
\jump{\vec{W}}^T\vec{U}^*&=\frac{\rhoM}{\rho^{\ln}}\jump{\rho} - \rhoM\avg{\beta}\left[\left(\jump{v_1}\right)^2 + \left(\jump{v_2}\right)^2 + \left(\jump{v_3}\right)^2\right] - \frac{\rhoM}{\beta_{-}\beta^{\ln}(\gamma-1)}\jump{\beta}\left(\beta^{\ln}-\beta_{-}\right) + \rhoM\jump{\beta}\left(-\avg{v_1^2}+2\voneM\avg{v_1}-\voneM^2\right.\\
&\quad \left.-\avg{v_2^2}+2\vtwoM\avg{v_2}-\vtwoM^2-\avg{v_3^2}+2\vthrM\avg{v_3}-\vthrM^2\right).
\end{aligned}
$}
\end{equation}
Now we do a manipulation on the velocity terms multiplied by $\jump{\beta}$, e.g., for $v_1$ we find
\begin{equation}\label{eq:velManip}
\begin{aligned}
- \avg{v_1^2}+2\voneM\avg{v_1}-\voneM^2 &= -\frac{1}{2}\voneP^2-\frac{1}{2}\voneM^2 + \voneP\voneM + \voneM^2 - \voneM^2\\
&= -\frac{1}{2}\left(\voneP^2-2\voneP\voneM+\voneM^2\right)\\
&= -\frac{1}{2}(\voneP-\voneM)^2\\
&= -\frac{1}{2}\left(\jump{v_1}\right)^2.
\end{aligned}
\end{equation}
This further simplifies \eqref{eq:EulerFirstI} to become
\begin{equation}\label{eq:EulerFirstII}
\resizebox{\textwidth}{!}{$
\begin{aligned}
\jump{\vec{W}}^T\vec{U}^*&=\frac{\rhoM}{\rho^{\ln}}\jump{\rho} - \rhoM\avg{\beta}\left[\left(\jump{v_1}\right)^2 + \left(\jump{v_2}\right)^2 + \left(\jump{v_3}\right)^2\right] - \frac{\rhoM}{\beta_{-}\beta^{\ln}(\gamma-1)}\jump{\beta}\left(\beta^{\ln}-\beta_{-}\right) -\frac{\rhoM}{2}\jump{\beta}\left[\left(\jump{v_1}\right)^2 + \left(\jump{v_2}\right)^2 + \left(\jump{v_3}\right)^2\right]\\[0.1cm]
&= \frac{\rhoM}{\rho^{\ln}}\jump{\rho} -\rhoM\left[\left(\jump{v_1}\right)^2 + \left(\jump{v_2}\right)^2 + \left(\jump{v_3}\right)^2\right]\left(\avg{\beta}+\frac{1}{2}\jump{\beta}\right) - \frac{\rhoM}{\beta_{-}\beta^{\ln}(\gamma-1)}\jump{\beta}\left(\beta^{\ln}-\beta_{-}\right).
\end{aligned}
$}
\end{equation}
From the definitions of the jump and the arithmetic mean \eqref{SurfaceJumpMean} we know
\begin{equation}\label{eq:betaAvgProp}
\avg{\beta}+\frac{1}{2}\jump{\beta} = \frac{1}{2}\left(\beta_{+} + \beta_{-} + \beta_{+} -\beta_{-}\right) = \beta_{+},
\end{equation}
and we add and subtract the value of $\jump{\rho}$ to \eqref{eq:EulerFirstII} to find
\begin{equation}\label{eq:EulerFirstIII}
\resizebox{\textwidth}{!}{$
\begin{aligned}
\jump{\vec{W}}^T\vec{U}^*&= \frac{\rhoM}{\rho^{\ln}}\jump{\rho} \pm \jump{\rho} -\rhoM\betaP\left[\left(\jump{v_1}\right)^2 + \left(\jump{v_2}\right)^2 + \left(\jump{v_3}\right)^2\right] - \frac{\rhoM}{\beta_{-}\beta^{\ln}(\gamma-1)}\jump{\beta}\left(\beta^{\ln}-\beta_{-}\right)\\[0.1cm]
&=\jump{\rho} - \frac{1}{\rho^{\ln}}\jump{\rho}\left(\rho^{\ln}-\rhoM\right)-\rhoM\betaP\left[\left(\jump{v_1}\right)^2 + \left(\jump{v_2}\right)^2 + \left(\jump{v_3}\right)^2\right] - \frac{\rhoM}{\beta_{-}\beta^{\ln}(\gamma-1)}\jump{\beta}\left(\beta^{\ln}-\beta_{-}\right).
\end{aligned}
$}
\end{equation}
From the definition of the logarithmic and arithmetic mean between two values of a given variable, $a$, we know that \cite{carlson1966}
\begin{equation}
a^{\ln} \leq \avg{a},
\end{equation}
and we can reuse the result \eqref{eq:avgProperty} to find
\begin{equation}\label{eq:logSimplify}
a^{\ln} - a_{-} \leq \avg{a} - a_{-} = \frac{1}{2}\jump{a}.
\end{equation}
We apply \eqref{eq:logSimplify} to the $\rho^{\ln}$ and $\beta^{\ln}$ terms in \eqref{eq:EulerFirstIII} to find
\begin{equation}\label{eq:EulerFirstIIII}
\begin{aligned}
\jump{\vec{W}}^T\vec{U}^* &\leq \jump{\rho} - \frac{1}{2\rho^{\ln}}\left(\jump{\rho}\right)^2-\rhoM\betaP\left[\left(\jump{v_1}\right)^2 + \left(\jump{v_2}\right)^2 + \left(\jump{v_3}\right)^2\right] - \frac{\rhoM}{2\beta_{-}\beta^{\ln}(\gamma-1)}\left(\jump{\beta}\right)^2\\[0.1cm]
&\leq\jump{\rho},
\end{aligned}
\end{equation}
under the physical assumptions of positive density and temperature. Thus, the upwind temporal state fulfills \eqref{eq:EulerCondition} and is entropy stable in time for the Euler equations.
\end{proof}

\begin{remark}
The result of Lemma 1 that the upwind state is entropy stable also holds for the shallow water (see \ref{sec:ShallowWater}) and the ideal MHD (see \ref{sec:UpwindMHD}) equations.
\end{remark}

\subsection{Kinetic energy preservation for the Euler equations}\label{sec:KineticEnergy}
For the Euler equations \eqref{eq:Euler}, it is also possible to recover a balance law for the kinetic energy such that the discrete integral of the kinetic energy is not changed by the advective terms, but only by the pressure work \cite{jameson2008}. Jameson \cite{jameson2008} analyzed finite volume methods with respect to the kinetic energy and constructed conditions on the numerical surface flux functions to generate kinetic energy preserving (KEP) schemes. Gassner et al. \cite{Gassner:2016ye} generalized the KEP scheme into the high-order DG context on Cartesian meshes. In this section, we extend these results to the space-time DGSEM on curvilinear hexahedral meshes and introduce similar conditions on the numerical state function to guarantee KEP in time.

\subsubsection{Continuous kinetic energy evolution}\label{Kinetic:ContinuousAnalysis}
We want the space-time DG scheme to mimic the continuous analysis. Therefore, much as we did for the entropy, we first examine and analyze the continuous kinetic energy balance and determine the steps that the discretization must capture. First, we define the following set of variables  
\begin{equation}\label{KineticEnergyVariables}
\vec{v}:=\left(-\frac{1}{2}\left|\vv{v}\right|^2,\vv{v},0\right)^T.
\end{equation}
Then, we obtain the kinetic energy by 
\begin{equation}
\vec{v}^{T}\vec{u}=-\frac{1}{2}\rho\left|\vv{v}\right|^{2}+\rho\vv{v}^{T}\vv{v}=\frac{1}{2}\rho\left|\vv{v}\right|^{2}=:\kappa.
\end{equation} 
We note that $\vec{v}^T=\pderivative{\kappa}{\vec{u}}$. Furthermore, it follows  
\begin{equation}
\vec{v}^{T}\pderivative{\vec{u}}{t}
=\pderivative{\kappa}{t}
\qquad \text{and} \qquad 
\vec{v}^{T}\left(\vv{\nabla}_x \cdot \blockvec{\vec{f}}\right)=\vv{\nabla}_x\cdot\vv{f}^{\kappa} + \vv{v}\cdot\vv{\nabla}_x p,
\end{equation}
where $\vv{f}^{\kappa}=\frac{1}{2}\rho\vv{v}\left|\vv{v}\right|^{2}$ can be interpreted as a kinetic energy flux and $\vv{v}\cdot\vv{\nabla}_x p$ is the pressure work. Thus, we obtain the equation for the kinetic energy balance 
\begin{equation}\label{KineticEnergyBalance}
\pderivative{\kappa}{t} + \vv{\nabla}_x \cdot \vv{f}^{\kappa}+\vv{v}^T\cdot\vv{\nabla}_x p=0.   
\end{equation} 
  
Next, we integrate the equation \eqref{KineticEnergyBalance} over a space-time domain $\Omega\times [0,T]$ to obtain
\begin{equation}\label{KineticEnergyBalanceIntegral1}
\int\limits_{\Omega}\int\limits_0^T \pderivative{\kappa}{t}\dt\dV + 
\int\limits_0^T\int\limits_{\Omega}\vv{\nabla}_x \cdot \vv{f}^{\kappa}\dV\dt 
+ \int\limits_0^T\int\limits_{\Omega} \vv{v}^T\cdot\vv{\nabla}_x p \dV\dt= 0.
\end{equation}
The terms on left hand side in \eqref{KineticEnergyBalanceIntegral1} are evaluated as follows: The fundamental theorem of calculus is used for the temporal integral in the first term, the divergence theorem is used for the spatial integral in the second term and the spatial integral in the last term is evaluated with the integration-by-parts formula. This results in 
\begin{equation}\label{KineticEnergyBalanceIntegral2}
\int\limits_{\Omega}\left(\kappa(x,y,z,T) - \kappa(x,y,z,0)\right)\dV  
- \int\limits_0^T\int\limits_{\Omega} (\vv{\nabla}_x\cdot\vv{v}) p\dV\dt  \\ 
+\int\limits_0^T\int\limits_{\partial\Omega} \left(\vv{f}^{\kappa}+p\vv{v}\right) \cdot\vv{n}\dS\dt= 0,
\end{equation}
where $\vv{n}$ is the normal at the physical boundary. Rearranging terms provides 
\begin{equation}\label{KineticEnergyBalanceIntegral3}
\int\limits_{\Omega}\kappa(x,y,z,T)\dV 
= 
\int\limits_{\Omega}\kappa(x,y,z,0)\dV
+ \int\limits_0^T\int\limits_{\Omega} (\vv{\nabla}_x\cdot\vv{v}) p\dV\dt
-\int\limits_0^T\int\limits_{\partial\Omega} \left(\vv{f}^{\kappa}+p\vv{v}\right) \cdot\vv{n}\dS\dt.  
\end{equation}
We see that in the incompressible case $\vv{\nabla}_x\cdot\vv{v}=0$. Thus, for incompressible flows, the spatial integral of the kinetic energy at time $T$ is bounded by its initial value provided proper boundary conditions are considered. 
In many situations, e.g., compressible flows, discontinuous solutions, or specific boundary conditions, it is only possible to obtain the inequality      
\begin{equation}\label{Goal:KineticEnergy}
\int\limits_{\Omega}\kappa(x,y,z,T)\dV 
\leq 
\int\limits_{\Omega}\kappa(x,y,z,0)\dV
+ \int\limits_0^T\int\limits_{\Omega} (\vv{\nabla}_x\cdot\vv{v}) p\dV\dt
-\int\limits_0^T\int\limits_{\partial\Omega} \left(\vv{f}^{\kappa}+p\vv{v}\right) \cdot\vv{n}\dS\dt.  
\end{equation}
Thus, the kinetic energy at a given time $T$ is bounded by its initial value, the volume integral over the non-conservative term $(\vv{\nabla}_x\cdot\vv{v}) p$ and a contribution from the boundary faces which can be controlled by suitable boundary conditions.

\subsubsection{Discrete kinetic energy analysis}
In the context of the space-time DGSEM, we are interested in finding an upper bound for the quantity
\begin{equation}\label{DicreteKineticIntegrals1}
\bar{K}\left(\mathbf{U}\left(T\right)\right):=
\sum_{k=1}^{K_{S}}\left.\left\langle \kappa\left(\mathbf{U}^{K_{T},k}\right),J^{k}\right\rangle _{N}\right|_{\tau=1}.
\end{equation}
The quantity \eqref{DicreteKineticIntegrals1} is a discrete version of the spatial integral of $\kappa(x,y,z,T)$. Likewise, the quantity  
\begin{equation}\label{DicreteKineticIntegrals2}
\bar{K}\left(\mathbf{u}\left(0\right)\right):=\sum_{k=1}^{K_{S}}\left.\left\langle \kappa\left(\mathbf{u}^{1,k}\right),J^{k}\right\rangle _{N}\right|_{\tau=-1},
\end{equation}
is a discrete version of the spatial integral of $\kappa(x,y,z,0)$. The quantity $\left.\mathbf{u}^{1,k}\right|_{\tau=-1}=\mathbf{u}^{k}\left(0\right)$ in \eqref{DicreteKineticIntegrals2} is the initial condition prescribed to the conservation law in the spatial cell $e_{k}$, $k=1,\dots,K_{S}$. In order to construct a space-time KEP DGSEM, we replace the derivative projection operator in the temporal part \eqref{TemporalPart} by    
\begin{equation}\label{TemporalDerivativeProjectionOperatorKinetic}
\mathbb{D}^{M}\mathbf{U}_{\sigma ijk}^{\text{KEP}}:=2\sum_{\theta=0}^{M}\mathcal{D}_{\sigma\theta}\mathbf{U}^{\text{KEP}}\left(\mathbf{U}_{\sigma ijk},\mathbf{U}_{\theta ijk}\right),
\end{equation}
where the state $\mathbf{U}^{\text{KEP}}$ is consistent, symmetric, and satisfies the conditions
\begin{equation}\label{DiscreteKineticFlux}
\mathbf{U}_{2}^{\text{KEP}}=\avg{v_{1}}_{\left(\sigma,\theta\right)ijk}\mathbf{U}_{1}^{\text{KEP}},\quad 
\mathbf{U}_{3}^{\text{KEP}}=\avg{v_{2}}_{\left(\sigma,\theta\right)ijk}\mathbf{U}_{1}^{\text{KEP}}, \quad 
\mathbf{U}_{4}^{\text{KEP}}=\avg{v_{3}}_{\left(\sigma,\theta\right)ijk}\mathbf{U}_{1}^{\text{KEP}},
\end{equation}
for $\sigma,\theta=0,\dots,M$ and $i,j,k=0,\dots,N$. 
The volume averages in \eqref{DiscreteKineticFlux} are given by
\begin{equation}
\avg{v_{l}}_{\left(\sigma,\theta\right)ijk}=\frac{1}{2}
\left(\left(v_{l}\right)_{\sigma ijk}+\left(v_{l}\right)_{\theta ijk}\right), 
\qquad l=1,2,3.
\end{equation} 
We note that the entropy preserving temporal Euler state $\mathbf{U}^{\#}$ given by \eqref{Euler:ECState} also satisfies the conditions \eqref{DiscreteKineticFlux}. The derivative projection operator in the spatial part \eqref{SpatialPart} is replaced by 
\begin{align}\label{SpatialDerivativeProjectionOperator}
\begin{split}
\Dprojection{N}\cdot\blockvec{\tilde{\mathbf{F}}}{}_{\sigma ijk}^{\text{KEP}}
:=2\sum_{m=0}^{N}\quad\,& 
\mathcal{D}_{im}\left(\blockvec{\mathbf{F}}{}^{\text{KEP},1}\left(\mathbf{U}_{\sigma ijk},\mathbf{U}_{\sigma mjk}\right)\cdot\avg{J\vv{a}^{1}}_{\left(i,m\right)jk}\right) \\
\quad +  & \,\mathcal{D}_{jm}\left(\blockvec{\mathbf{F}}{}^{\text{KEP},2}\left(\mathbf{U}_{\sigma ijk},\mathbf{U}_{\sigma imk}\right)\cdot\avg{J\vv{a}^{2}}_{i\left(j,m\right)k}\right) \\
\quad  +& \, \mathcal{D}_{km}\left(\blockvec{\mathbf{F}}{}^{\text{KEP},3}\left(\mathbf{U}_{\sigma ijk},\mathbf{U}_{\sigma ijm}\right)\cdot\avg{J\vv{a}^{3}}_{ij\left(k,m\right)}\right), 
\end{split}
\end{align}
where the volume averages of the metric terms are given by 
\eqref{SpatialVolumeAverages} .
The fluxes 
\begin{equation}\label{BlockKEP:Flux}
\blockvec{\mathbf{F}}{}^{\text{KEP},l}=\left(\mathbf{F}_{1}^{\text{KEP},l},\mathbf{F}_{2}^{\text{KEP},l},\mathbf{F}_{3}^{\text{KEP},l}\right)^T, \qquad l=1,2,3,
\end{equation}
are consistent with $\blockvec{\mathbf{F}}$ and symmetric such that e.g. 
\begin{equation}\label{FluxSymmetric}
\blockvec{\mathbf{F}}{}^{\text{KEP,l}}\left(\mathbf{U}_{\sigma ijk},\mathbf{U}_{\sigma mjk}\right)=\blockvec{\mathbf{F}}{}^{\text{KEP,l}}\left(\mathbf{U}_{\sigma mjk},\mathbf{U}_{\sigma ijk}\right),
\end{equation}      
for $\sigma=0,\dots,M$ and $i,j,k,m=0,\dots,N$. Furthermore the flux functions 
$\mathbf{F}_{s}{}^{\text{KEP},l}$, $l=1,2,3$ and $s=1,2,3$, satisfy Jameson's conditions \cite{jameson2008}   
\begin{equation}\label{Jameson}
{
\scriptsize
\begin{array}{ccccccccccc}
\mathbf{F}_{1}^{2,\text{KEP},l}= & \avg{v_{1}}\mathbf{F}_{1}^{1,\text{KEP},l} & +p_{1l}^{\star}, &  & \mathbf{F}_{2}^{2,\text{KEP},l}= & \avg{v_{1}}\mathbf{F}_{2}^{1,\text{KEP},l}, &  &  & \mathbf{F}_{2}^{3,\text{KEP},l}= & \avg{v_{1}}\mathbf{F}_{3}^{1,\text{KEP},l},\\[0.15cm]
\mathbf{F}_{1}^{3,\text{KEP},l}= & \avg{v_{2}}\mathbf{F}_{1}^{1,\text{KEP},l}, &  &  & \mathbf{F}_{2}^{3,\text{KEP},l}= & \avg{v_{2}}\mathbf{F}_{2}^{1}{}^{\text{KEP},l} & +p_{2l}^{\star}, &  & \mathbf{F}_{3}^{3,\text{KEP},l}= & \avg{v_{2}}\mathbf{F}_{3}^{1,\text{KEP},l},\\[0.15cm]
\mathbf{F}_{1}^{4,\text{KEP},l}= & \avg{v_{3}}\mathbf{F}_{1}^{1,\text{KEP},l}, &  &  & \mathbf{F}_{2}^{4,\text{KEP},l}= & \avg{v_{3}}\mathbf{F}_{2}^{1,\text{KEP},l}, &  &  & \mathbf{F}_{3}^{4,\text{KEP},l}= & \avg{v_{3}}\mathbf{F}_{3}^{1,\text{KEP},l} & +p_{3l}^{\star}, 
\end{array}
}
\end{equation}
where 
\begin{equation}\label{Jameson1} 
p_{sl}^{\star}=2\avg{p}-\frac{\avg{pJa_{l}^{s}}}{\avg{Ja_{l}^{s}}},\qquad s=1,2,3, \text{ and } l=1,2,3.
\end{equation}
We note that the quantities \eqref{Jameson1} become $p_{sl}=\avg{p}$ for all $s=1,2,3$ and $l=1,2,3$ in the case of a Cartesian mesh with non-curved elements. Moreover, it is enough to compute the spatial derivative projection 
operator \eqref{SpatialDerivativeProjectionOperator} from one Cartesian block flux  $\blockvec{\mathbf{F}}{}^{\text{KEP}}$ which components satisfy Jameson's conditions  
\begin{equation}\label{Jameson2}
{
\scriptsize
\begin{array}{ccccccccccc}
\mathbf{F}_{1}^{2,\text{KEP}}= & \avg{v_{1}}\mathbf{F}_{1}^{1,\text{KEP}} & +\avg{p}, &  & \mathbf{F}_{2}^{2,\text{KEP}}= & \avg{v_{1}}\mathbf{F}_{2}^{1,\text{KEP}}, &  &  & \mathbf{F}_{2}^{3,\text{KEP}}= & \avg{v_{1}}\mathbf{F}_{3}^{1,\text{KEP}},\\[0.15cm]
\mathbf{F}_{1}^{3,\text{KEP}}= & \avg{v_{2}}\mathbf{F}_{1}^{1,\text{KEP}}, &  &  & \mathbf{F}_{2}^{3,\text{KEP}}= & \avg{v_{2}}\mathbf{F}_{2}^{1,}{}^{\text{KEP}} & +\avg{p}, &  & \mathbf{F}_{3}^{3,\text{KEP}}= & \avg{v_{2}}\mathbf{F}_{3}^{1,\text{KEP}},\\[0.15cm]
\mathbf{F}_{1}^{4,\text{KEP}}= & \avg{v_{3}}\mathbf{F}_{1}^{1,\text{KEP}}, &  &  & \mathbf{F}_{2}^{4,\text{KEP}}= & \avg{v_{3}}\mathbf{F}_{2}^{1,\text{KEP}}, &  &  & \mathbf{F}_{3}^{4,\text{KEP}}= & \avg{v_{3}}\mathbf{F}_{3}^{1,\text{KEP}} & +\avg{p}. 
\end{array}
}
\end{equation}
In Ranocha's PHD thesis \cite[Chapter 7, Section 4]{ranocha2018} an example of an entropy conserving two point flux with the property \eqref{Jameson2} was developed. This flux is described in the Appendix \ref{sec:App E}. Moreover, the conditions \eqref{Jameson2} were used in \cite{Gassner:2016ye} to prove that a semi-discrete high-order DGSEM with non-curved elements is a KEP method. For a space-time DGSEM with curved hexahedral elements, we have the following result.      
\begin{thm}[Kinetic energy preservation]\label{Therorem:KEP}
A space-time DGSEM \eqref{SpaceTimeDG1} for the Euler equations with:
\begin{enumerate}
\item Dirichlet boundary conditions in time and periodic boundary conditions in space,   
\item  temporal numerical state functions $\mathbf{U}^{*}$ with the property  \eqref{EntropyPreservation1} at the exterior temporal boundary points and the property \eqref{DiscreteKineticFlux} at the interior temporal boundary points,   
\item surface fluxes $\tilde{\mathbf{F}}_{\hat{n}}$ computed from Cartesian fluxes $\mathbf{F}_{l}^{\text{KEP}}$, $l=1,2,3$, with the property \eqref{Jameson2}, 
\end{enumerate}
is a kinetic energy preserving method such that
\begin{align}\label{eq:DiscreteKEP}
\begin{split}
\bar{K}\left(\mathbf{U}\left(T\right)\right)
&=
\bar{K}\left(\mathbf{U}\left(0\right)\right)
+\sum_{n=1}^{K_{T}}\sum_{k=1}^{K_{S}}\frac{\Delta t^{n}}{2}\left\langle \vv{\nabla}_{\xi}\cdot\interpolation{N}{\left(\vv{\tilde{v}}^{n,k}\right)},p^{n,k}\right\rangle _{N\times M} \\ 
& \quad \qquad \qquad \ \ +\sum_{n=1}^{K_{T}}\underset{\text{faces}}{\sum_{\text{Interior}}}\frac{\Delta t^{n}}{2}\int\limits _{\partial E^{3},N}\left\langle \avg{p^{n,k}}\jump{\tilde{v}_{\hat{n}}^{n,k}},1\right\rangle _{M}\dS \\
&\quad \qquad \qquad \ \ +
\sum_{k=1}^{K_{S}}\left.\left\langle \jump{\mathbf{V}^{1,k}}^{T}\mathbf{u}^{1,k},J^{k}\right\rangle _{N}\right|_{\tau=-1}.
\end{split}
\end{align} 
\end{thm}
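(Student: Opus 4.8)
The plan is to follow the template of Theorems 1 and \ref{Theorem:EntropyPreservation}, testing the variational form \eqref{SpaceTimeDG1} against the \emph{kinetic energy variables} $\mathbf{V}$ (the interpolant of $\vec{v}$ from \eqref{KineticEnergyVariables}) in place of the entropy variables $\mathbf{W}$. Since $\vec{v}^T=\partial\kappa/\partial\vec{u}$, the choice $\boldsymbol{\varphi}=\mathbf{V}$ is the discrete analogue of contracting the kinetic energy balance \eqref{KineticEnergyBalance} against $\vec{v}$. Summing over all space-time elements gives
\begin{equation*}
\sum_{n=1}^{K_{T}}\sum_{k=1}^{K_{S}}\left(A_{T}\!\left(\mathbf{U}^{n,k},\mathbf{V}^{n,k}\right)+A_{S}\!\left(\blockvec{\mathbf{F}}{}^{n,k},\mathbf{V}^{n,k}\right)\right)=0,
\end{equation*}
and I would treat the temporal and spatial sums separately.

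For the temporal sum the essential step is the kinetic-energy analogue of the discrete entropy conservation identity \eqref{DiscreteEntropyConservation}. Inserting the conditions \eqref{DiscreteKineticFlux} into $\jump{\mathbf{V}}^{T}\mathbf{U}^{\text{KEP}}$ and expanding the first component with $\jump{v_{l}^{2}}=2\avg{v_{l}}\jump{v_{l}}$ from \eqref{eq:jumpProperties}, the advective pieces cancel term by term, so that
\begin{equation*}
\jump{\mathbf{V}}^{T}\mathbf{U}^{\text{KEP}}=0.
\end{equation*}
This is precisely the statement that the kinetic-energy potential $\mathbf{V}^{T}\mathbf{U}-\kappa$ vanishes at the nodes, taking the role of $\jump{\varPhi}$ in the entropy analysis. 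With this cancellation I would reuse the telescoping computation behind \eqref{TemporalStabilityEC} and \eqref{EC:Equation5}: the interior temporal interface terms drop out, the exterior condition \eqref{EntropyPreservation1} collapses the final-time surface term onto $\bar{K}\!\left(\mathbf{U}(T)\right)$, and the first-element surface term produces the initial kinetic energy $\bar{K}\!\left(\mathbf{U}(0)\right)$ together with the projection remainder $\sum_{k=1}^{K_{S}}\left.\left\langle\jump{\mathbf{V}^{1,k}}^{T}\mathbf{u}^{1,k},J^{k}\right\rangle_{\!N}\right|_{\tau=-1}$ (the $\jump{\varPhi}$-type contribution now being absent).

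For the spatial sum I would contract the spatial derivative projection operator built from the KEP fluxes against $\mathbf{V}$, combine it with the spatial surface-flux terms, and apply the SBP property \eqref{SBP} exactly as in the entropy computation that gave \eqref{EntropyVolumeContribution}. Jameson's conditions \eqref{Jameson2} are engineered so that under this contraction the advective momentum-flux pieces $\avg{v_{l}}\mathbf{F}_{s}^{1,\text{KEP}}$ reorganize into a discrete kinetic-energy flux whose interior-face contributions cancel, while the pressure corrections $\avg{p}$ survive and assemble into the pressure-work volume term $\tfrac{\Delta t^{n}}{2}\left\langle\vv{\nabla}_{\xi}\cdot\interpolation{N}{(\vv{\tilde{v}}^{n,k})},p^{n,k}\right\rangle_{\!N\times M}$ together with the interior surface term $\tfrac{\Delta t^{n}}{2}\int_{\partial E^{3},N}\left\langle\avg{p^{n,k}}\jump{\tilde{v}_{\hat{n}}^{n,k}},1\right\rangle_{\!M}\dS$. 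This mirrors the integration-by-parts step \eqref{KineticEnergyBalanceIntegral2} on the continuous level. Since the problem is periodic in space, the physical boundary faces cancel and only these interior pressure contributions remain. Adding the temporal and spatial sums and rearranging then yields \eqref{eq:DiscreteKEP}.

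The main obstacle I anticipate is the spatial step on \emph{curvilinear} hexahedra. The temporal cancellation is a short algebraic identity, but the spatial analysis must carry the volume-averaged contravariant metrics $\avg{J\vv{a}^{l}}$ through the two-point KEP fluxes and the SBP differentiation, and verify that the discrete metric identities \eqref{DiscreteMetricIdentities} let the advective part reorganize cleanly while the $\avg{p}$ terms reassemble into the contravariant divergence $\vv{\nabla}_{\xi}\cdot\interpolation{N}{(\vv{\tilde{v}})}$ of the interpolated contravariant velocity. Reconciling the single Cartesian KEP flux of \eqref{Jameson2} (with pressure correction $\avg{p}$) against the general curvilinear form with the corrections $p_{sl}^{\star}$ of \eqref{Jameson1}, and keeping the metric bookkeeping consistent throughout, is the delicate part of the argument.
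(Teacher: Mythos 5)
Your proposal follows essentially the same route as the paper's proof in Appendix \ref{sec:TimeProofsKEP1}: testing against $\mathbf{V}$, establishing the nodal cancellation $\jump{\mathbf{V}}^{T}\mathbf{U}^{\text{KEP}}=0$ via \eqref{DiscreteKineticFlux} and \eqref{eq:jumpProperties} so the temporal volume term telescopes to the boundary, and proving the spatial identity \eqref{KineticVolumeContribution} from Jameson's conditions, the SBP property, and the discrete metric identities, with $\jump{\mathbf{V}}^{T}\tilde{\mathbf{F}}_{\hat{n}}^{*}=\avg{p}\jump{\tilde{v}_{\hat{n}}}$ handling the interior faces. The curvilinear bookkeeping you flag as delicate is exactly what the paper carries out in Appendix \ref{sec:TimeProofsKEP2} using \eqref{Jameson1} and \eqref{SBP2}--\eqref{SBP3}, so the plan is sound and matches the published argument.
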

\begin{remark}
We note that the equation \eqref{eq:DiscreteKEP} is a discrete equivalent of the continuous equation \eqref{KineticEnergyBalanceIntegral3}. In general the last sum on the right side in \eqref{eq:DiscreteKEP} does not vanish, but the contribution of this quantity is small. In addition, we note that due to the discontinuous solution space ansatz the numerical approximation of $\vv{\nabla}_x\cdot\vv{v}$ generates volume and surface contributions.
\end{remark}
The proof for the Theorem \ref{Therorem:KEP} is similar to the proofs of the entropy stability and preservation in the sections \ref{sec:Entropy} and \ref{sec:EntropyPreservation}. Nevertheless, for the sake of completeness, we present a proof in Appendix \ref{sec:TimeProofsKEP1}.   

We note that it is not a computationally tractable option to compute the state functions $\vec{U}^{*}$ as given in point 2 of Theorem \ref{Therorem:KEP}, since a state function with the property \eqref{DiscreteKineticFlux} couples the time slaps. It is more convenient to choose the state functions $\vec{U}^{*}$ by the upwind state \eqref{Euler:UpwindState}. The upwind state does not have the KEP property \eqref{DiscreteKineticFlux}. However, by the properties \eqref{eq:jumpProperties} of the jump operator we know  
\begin{equation}\label{KineticUpwindState1}
\jump{\left|\vv{v}\right|^{2}}=2\left(\avg{v_{1}}\jump{v_{1}}+\avg{v_{2}}\jump{v_{2}}+\avg{v_{3}}\jump{v_{3}}\right). 
\end{equation}  
Thus, it follows directly by the definition of the temporal upwind state   
\begin{align}\label{KEPConditionUPWINDFLUX}
\begin{split}
\jump{\vec{V}}^T\vec{U}^{*} & =
  -\frac{1}{2}\jump{\left|\vv{v}\right|^{2}}\rho_{-} 
 +\jump{v_{1}}\rho_{-}\left(v_{1}\right)_{-} 
 +\jump{v_{2}}\rho_{-}\left(v_{2}\right)_{-} 
 +\jump{v_{3}}\rho_{-}\left(v_{3}\right)_{-}  \\
& =  \rho_{-}\left(\jump{v_{1}}\left(\left(v_{1}\right)_{-}-\avg{v_{1}}\right)+\jump{v_{2}}\left(\left(v_{2}\right)_{-}-\avg{v_{2}}\right)+\jump{v_{3}}\right)\left(\left(v_{3}\right)_{-}-\avg{v_{3}}\right) \\ 
 & =  -\frac{1}{2}\rho_{-}\left(\jump{v_{1}}^{2}+\jump{v_{2}}^{2}+\jump{v_{3}}^{2}\right) \\
 & \leq 0. 
\end{split}
\end{align}
If, we apply this inequality instead of the identity \eqref{DiscreteKineticFlux} in the proof of the Theorem \ref{Therorem:KEP} (cf. Appendix \ref{sec:TimeProofsKEP1}), we obtain the following inequality for the space-time DGSEM     
\begin{align}\label{eq:DiscreteKEPInequality}
\begin{split}
\bar{K}\left(\mathbf{U}\left(T\right)\right)
&\leq
\bar{K}\left(\mathbf{U}\left(0\right)\right)
+\sum_{n=1}^{K_{T}}\sum_{k=1}^{K_{S}}\frac{\Delta t^{n}}{2}\left\langle \vv{\nabla}_{\xi}\cdot\interpolation{N}{\left(\vv{\tilde{v}}^{n,k}\right)},p^{n,k}\right\rangle _{N\times M} \\ 
& \quad \qquad \qquad \ \ +\sum_{n=1}^{K_{T}}\underset{\text{faces}}{\sum_{\text{Interior}}}\frac{\Delta t^{n}}{2}\int\limits _{\partial E^{3},N}\left\langle \avg{p^{n,k}}\jump{\tilde{v}_{\hat{n}}^{n,k}},1\right\rangle _{M}\dS,
\end{split}
\end{align} 
since the definition of the upwind state \eqref{Euler:UpwindState} and the inequality \eqref{eq:DiscreteKEPInequality} provide   
\begin{equation}\label{eq:DiscreteKEPInequality1}
\left.\left\langle \jump{\mathbf{V}^{1,k}}^{T}\mathbf{u}^{1,k},J^{k}\right\rangle _{N}\right|_{\tau=-1}=\left.\left\langle \jump{\mathbf{V}^{1,k}}^{T}\mathbf{U}^{1,k,*}\right\rangle _{N}\right|_{\tau=-1}\leq 0,\qquad k=1,\dots,K_{S}.
\end{equation} 
We note that the inequality \eqref{eq:DiscreteKEPInequality} is the discrete equivalent of the continuous inequality \eqref{Goal:KineticEnergy}.


\section{Numerical results}\label{sec:numResults}
In this section, we present numerical tests for the one dimensional compressible Euler equations 
\begin{equation}\label{eq:Euler1D}
\pderivative{\vec{u}}{t} + \pderivative{\vec{f}}{x}=0, 
\end{equation}
\begin{equation}
\vec{u}:=\left(\rho,\rho v, E\right)^{T},\qquad 
\vec{f}=\left(\rho v, \rho v^{2}+p, \left(E+p\right)v\right)^T,
\end{equation}
to evaluate the theoretical findings of the previous sections. The density $\rho$, the fluid velocities $v$ and the total energy $E$ are conserved states in the Euler system. Furthermore, to close the system, we assume an ideal gas such that the pressure is defined as
\begin{equation}
p = (\gamma-1)\left(E - \frac{1}{2}\rho v^2\right),
\end{equation}  
where $\gamma$ is the adiabatic constant. In all the numerical examples is $\gamma=1.4$ which is the adiabatic constant for air. The primary concern is the numerical verification of the high-order accuracy, entropy stability, entropy preservation and  kinetic energy preservation of the space-time DGSEM. In particular, we show that the temporal upwind state \eqref{Euler:UpwindState} is entropy stable and the temporal state \eqref{Euler:ECState} 
preserves entropy and kinetic energy. 

\subsection{Convergence test}\label{sec:convergence}

We start with a demonstration of the high-order accuracy by a manufactured solution 
\begin{equation}\label{MFS:Euler}
\mathbf{U}\left(x,t\right)
=
\begin{pmatrix}
\rho\left(x,t\right)\\[0.15cm] \rho v\left(x,t\right) \\[0.15cm] E\left(x,t\right)\\[0.15cm]
\end{pmatrix}
=
\begin{pmatrix}
2+\sin\left(2\pi(x-t)\right)\\[0.15cm]
2+\sin\left(2\pi(x-t)\right) \\[0.15cm]
\left(2+\sin\left(2\pi(x-t)\right)\right)^2\\[0.15cm]
\end{pmatrix}.
\end{equation}
The state \eqref{MFS:Euler} is a smooth analytical solution of the compressible Euler equations, if we consider the Euler system with the source term 
\begin{equation}
\vec{Q}\left(x,t\right)
=
\begin{pmatrix}
0\\[0.1cm] \frac{\pi(\gamma-1)}{2}\cos\left(2\pi(x-t)\right) \\[0.1cm] \frac{\pi(\gamma-1)}{2}\cos\left(2\pi(x-t)\right)\\[0.1cm]
\end{pmatrix}.
\end{equation}
The spatial domain is $\left[0,1\right]$ and periodic boundary conditions in space are used. The final time of the simulation is chosen to be $T=1$ and Dirichlet Boundary conditions are used in time. The space-time elements are uniformly distributed with the length  $\Delta x= \frac{1}{K_{S}}$ for the spatial elements and the length  $\Delta t= \frac{1}{K_{T}}$ for the temporal elements, where $K_{S}$ denotes the number of spatial elements and $K_{T}$ denotes the number of temporal elements. We apply the space-time DGSEM with the temporal derivative projection operator \eqref{TemporalDerivativeProjectionOperator} computed by \eqref{Euler:ECState} and the upwind state \eqref{Euler:UpwindState} is used at the temporal interfaces. The spatial derivative projection operator \eqref{SpatialDerivativeProjectionOperator} is computed by the entropy conservative (EC) flux developed by Ranocha \cite{ranocha2018} given in Appendix \ref{sec:App E} and the EC flux with the dissipation matrix of the form \eqref{CartesianFluxesES} is used as spatial surface numerical flux in this example. 

The initial condition is determined by interpolation of $\mathbf{U}\left(x,0\right)$ at $N+1$ LGL nodes. Table \ref{tab:P2P2} shows the $L^{2}$ errors of the conserved quantities and order of convergence for temporal polynomials of degree $M=2$ and spatial polynomials of degree $N=2$. 
 The observed experimental convergence rates agree with the expected optimal order three for the space time scheme, e.g. \cite{gassner2015space}.
\begin{center}
\begin{tabular}{c|c|cccccc}\toprule[1.5pt]
$K_{T}$ &  $K_{S}$
&  ${L}^{2}\left(\rho\right)$ &  $EOC(\rho)$ &  ${L}^{2}\left(\rho u \right)$  &   $EOC(\rho u)$  &  ${L}^{2}\left(E\right)$   & $EOC(E)$  \\\midrule
2 & 2  &  3.73E-02     & -     & 5.84E-02   & - & 1.43E-01 & -    \\
4 & 4   &6.27E-03 &2.6	&8.55E-03 &2.8	&1.91E-02 &2.9  \\   
8 & 8 &7.79E-04 &3.0	&6.78E-04 &3.7	&2.39E-03			&3.0     \\
16 & 16 	&1.20E-04 &2.7	&6.65E-05 & 3.3	&3.35E-04			&2.8\\
32 & 32 & 1.55E-05 & 3.0 & 6.95E-06 & 3.3 & 3.84E-05  &3.1 \\ 
\bottomrule[1.5pt]
\end {tabular}\par
\captionof{table}{Experimental order of convergence (EOC) for manufactured solution test \eqref{MFS:Euler}. 
The space-time DGSEM is used with temporal polynomial degree $M=2$ and spatial polynomial degree $N=2$.}\label{tab:P2P2}
\end{center}
The number for the spatial and temporal elements was chosen equal for the test in Table \ref{tab:P2P2}. Thus, we repeat this test example, but this time the number for the temporal elements is chosen to be twice as large as the number of the spatial elements. The results are given in Table \ref{tab:P2P2Diffgrid}. 
We the same order of convergence for the conserved quantities 
 as in Table \ref{tab:P2P2}. 
\begin{center}
\begin{tabular}{c|c|cccccc}\toprule[1.5pt]
$K_{T}$ &  $K_{S}$ &  $L^{2}\left(\rho\right)$ &  $EOC(\rho)$ &  $L^{2}\left(\rho u \right)$  &   $EOC(\rho u)$  &  $L^{2}\left(E\right)$   & $EOC(E)$  \\\midrule
 4 &  2  &  2.90E-02   & -     & 4.52E-02   & - & 1.05E-01  & -    \\
  8 &  4  &6.41E-03 &2.2	&8.56E-03 &2.4	&1.83E-02 &2.5  \\    
16 &  8 &8.10E-04 &3.0	& 7.20E-04&3.6	&2.40E-03			&2.9     \\
32 & 16 &1.21E-04 &2.7	& 6.86E-05 &3.4	&3.35E-04		&2.8\\
64 & 32  & 1.56E-05 & 3.0 & 7.03E-06 &3.3 & 3.84E-05  &3.1 \\
\bottomrule[1.5pt]
\end {tabular}\par
\captionof{table}{Experimental order of convergence (EOC) for manufactured solution test \eqref{MFS:Euler}. 
The space-time DGSEM is used with temporal polynomial degree $M=2$ and spatial polynomial degree $N=2$. The grid cell number $K_{T}$ for the temporal cells is chosen twice big as the spatial grid cell number $K_{S}$.}\label{tab:P2P2Diffgrid}
\end{center}
In Table \ref{tab:P3P3} the behavior of the space-time DGSEM for polynomials with an odd degree is shown. The $L^{2}$ errors of the conserved quantities and order of convergence for temporal polynomials of degree $M=3$ and spatial polynomials of degree $N=3$ are presented. The experimental convergence order is suboptimal.   
\begin{center}
\centering
\begin{tabular}{c|c|cccccc}\toprule[1.5pt]
$K_{T}$ &  $K_{S}$ &  ${L}^{2}\left(\rho\right)$ &  $EOC(\rho)$ &  ${L}^{2}\left(\rho u \right)$  &   $EOC(\rho u)$  &  ${L}^{2}\left(E\right)$   & $EOC(E)$  \\\midrule
  2 &  2  &  6.85E-03  & -     & 1.17E-02   & - & 2.31E-02  & -    \\
  4 &  4   &3.83E-04 &4.2	&3.67E-04 &5.0	&1.37E-03 &4.1 \\   
  8 &  8  &5.70E-05 &2.7	&3.99E-05 &3.2	&1.49E-04			&3.2     \\
  16 &  16  &7.08E-06 &3.0	&2.75E-06 &3.9	&1.40E-05			&3.4     \\
  32 &  32 &5.44E-07 &3.7	&2.54E-07 &3.4	&1.22E-06			&3.5    \\
\bottomrule[1.5pt]
\end {tabular}\par
\captionof{table}{Experimental order of convergence (EOC) for manufactured solution test \eqref{MFS:Euler}. 
The space-time DGSEM is used with temporal polynomial degree $M=3$ and spatial polynomial degree $N=3$.}\label{tab:P3P3}
\end{center}
Next, we choose polynomials of different degree in time and space and compute the convergence order of the space-time DGSEM. The $L^{2}$ errors of the conserved quantities and order of convergence for temporal polynomials of degree $M=3$ and spatial polynomials of degree $N=2$ are presented in Table \ref{tab:P3P2}. The error of the spatial approximation dominates compared to the temporal approximation errors such that an experimental convergence order of three is obtained. 
\begin{center}
\begin{tabular}{c|c|cccccc}\toprule[1.5pt]
$K_{T}$ &  $K_{S}$  &  $L^{2}\left(\rho\right)$ &  $EOC(\rho)$ &  $L^{2}\left(\rho u \right)$  &   $EOC(\rho u)$  &  $L^{2}\left(E\right)$   & $EOC(E)$  \\\midrule
  2 &  2 &  2.98E-02    & -     & 4.48E-02   & - & 1.11E-01  & -    \\
  4 &  4   & 6.58E-03 &2.2	&8.78E-03 &2.4	&1.84E-02 &2.6  \\   
  8 &  8  &8.17E-04 &3.0	&7.32E-04 &3.6	&2.40E-03			&2.9     \\
  16 &  16 	&1.22E-04 &2.7	&6.91E-05 &3.4	&3.35E-04		&2.8\\
  32 &  32 & 1.56E-05 & 3.0 & 7.05E-06 &3.3 & 3.84E-05  &3.1 \\
\bottomrule[1.5pt]
\end {tabular}\par
\captionof{table}{Experimental order of convergence (EOC) for manufactured solution test \eqref{MFS:Euler}. 
The space-time DGSEM is used with temporal polynomial degree $M=3$ and spatial polynomial degree $N=2$.}\label{tab:P3P2}
\end{center}
We observe a similar effect when the degree of the spatial polynomials is larger than the degree of the temporal polynomials. In Table \ref{tab:P2P3}, we present the $L^{2}$ errors of the conserved quantities and order of convergence for temporal polynomials of degree $M=2$ and spatial polynomials of degree $N=3$. 
We reach the optimal order four for the three conserved quantities as in the Table \ref{tab:P3P3}.     
\begin{center}
\begin{tabular}{c|c|cccccc}\toprule[1.5pt]
$K_{T}$ &  $K_{S}$ & $L^{2}\left(\rho\right)$ &  $EOC(\rho)$ &  $L^{2}\left(\rho u \right)$  &   $EOC(\rho u)$  &  $L^{2}\left(E\right)$   & $EOC(E)$  \\\midrule
2 &  2    &2.73E-02  & -     & 3.58E-02    & - & 6.28E-02 & -    \\
4 & 4  &4.39E-03 &2.6	&5.69E-03 &2.7	&7.58E-03 &3.1  \\   
8 &  8 &3.29E-04 &3.7	&4.09E-04 &3.8	&4.72E-04			&4.0     \\
16 &  16	&2.34E-05 &3.8	&2.91E-05 &3.8	&3.69E-05		&3.7\\
32 &  32 & 1.51E-06 & 4.0 & 1.86E-06 &4.0 & 2.50E-06  &3.9 \\
\bottomrule[1.5pt]
\end {tabular}\par
\captionof{table}{Experimental order of convergence (EOC) for manufactured solution test \eqref{MFS:Euler}. 
The space-time DGSEM is used with temporal polynomial degree $M=2$ and spatial polynomial degree $N=3$. The grid cell number $K_{T}$ for the temporal cells is chosen twice large as the spatial grid cell number $K_{S}$.}\label{tab:P2P3}
\end{center}
\subsection{Entropy stability check}\label{sec:ESCheck}
We consider the one-dimensional Euler equations \eqref{eq:Euler1D} with the initial discontinuous data 
\begin{equation}\label{Test:EulerECStability}
\rho_{0}=   \begin{cases}
     1 & \text{for } x \le 0.3 \\
     1.125 & \text{for } x>0.3 \\
   \end{cases}, \quad 
 v_{0} =0, \quad 
 p_{0} =   \begin{cases}
     1 & \text{for } x \le 0.3 \\
     1.1 & \text{for } x>0.3 \\
   \end{cases}\\
\end{equation}
on the spatial domain $\left[0,1\right]$. Periodic boundary conditions are used in space and Dirichlet Boundary conditions are used in time. We note that \eqref{Test:EulerECStability} provides a shock solution. To test the entropy stability property, we  measure the error   
\begin{equation}\label{ErrorTortalEntropy}
\Delta_{S}(T):=\bar{S}\left(\mathbf{U}\left(T\right)\right)-\bar{S}\left(\mathbf{u}\left(0\right)\right),
\end{equation} 
where $\bar{S}\left(\mathbf{U}\left(T\right)\right)$ is computed by \eqref{DisreteEntropyIntegral1} and $\bar{S}\left(\mathbf{u}\left(0\right)\right)$ is computed by \eqref{DisreteEntropyIntegral2}. The quantity represents the difference between the discrete total entropy at time $T$ and initial time and should decrease in time. The spatial domain is decomposed into $K_{S}=4$ spatial elements. The final time of the simulation is chosen to be $T=1$. Thus, we partition the temporal domain $\left[0,1\right]$ in $K_{T}=4,16,128$ elements. 
The temporal polynomial degree is $M=3$ and the spatial polynomial degree is $N=2$. Furthermore, we apply the space-time DGSEM with the same temporal derivative projection operator \eqref{TemporalDerivativeProjectionOperator} as in Section \ref{MFS:Euler} and the spatial derivative projection operator \eqref{SpatialDerivativeProjectionOperator} as in Section \ref{MFS:Euler}. 
The evolution of the error \eqref{ErrorTortalEntropy} until a final time of $T=1$ is presented in Fig. \ref{fig:EntropyStability} for both EC flux functions. We observe that the quantity \eqref{ErrorTortalEntropy} decreases in time as predicted by the theoretical result of Theorem 1.    
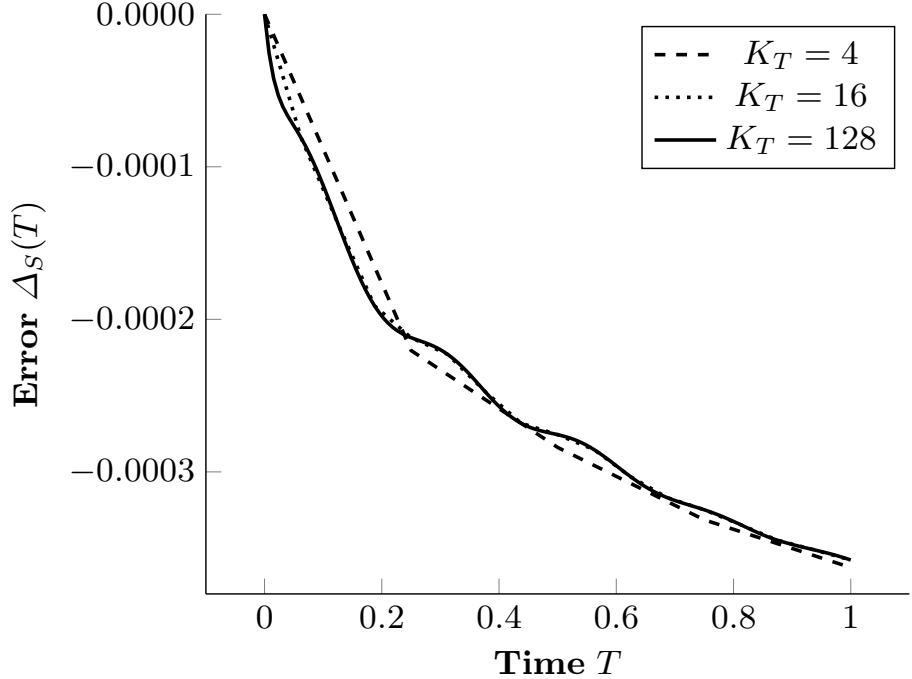
\begin{figure}[h]
\centering
\begin{tikzpicture}[scale=1.35]
\begin{axis}
[ymin=-0.00038,
 ymax=0,
    yticklabel style={
            /pgf/number format/fixed,
            /pgf/number format/precision=4,
            /pgf/number format/fixed zerofill
        }, xlabel={\textbf{Time} $T$}, ylabel={\textbf{Error} $\Delta_{S}(T)$ }, 
        scaled y ticks=false,
    axis x line*=bottom,
    axis y line*=left]
\addplot [dashed,line width=1.000] table[x index=0,y index=1] {DataEntropyStabilityKT=4.txt};
\addplot [dotted, line width=1.000] table[x index=0,y index=1] {DataEntropyStabilityKT=16.txt};
\addplot [line width=1.000] table[x index=0,y index=1] {DataEntropyStabilityKT=128.txt};
\legend{$K_{T}=4$, $K_{T}=16$, $K_{T}=128$}
\end{axis}
\end{tikzpicture}
\caption{Temporal evolution of the difference between the discrete total entropy at initial time and the discrete total energy at time $T$ represented by of the error $\Delta_{S}(T)$ given by \eqref{ErrorTortalEntropy} for the initial data \eqref{Test:EulerECStability} with $K_{T}=4,16,128$ temporal elements and $K_{S}=4$ spatial elements. The temporal polynomial degree is $M=3$ and the spatial polynomial degree $N=2$.}
	\label{fig:EntropyStability}
\end{figure}

\subsection{Entropy conservation check}\label{sec:ECCheck}
In this section, we investigate numerically the theoretical findings from Theorem~2. Therefore, we consider the same initial value problem \eqref{Test:EulerECStability}. As in the previous section, periodic boundary conditions are used in space and Dirichlet Boundary conditions are used in time. The spatial domain is decomposed into $K_{S}$ spatial elements and the temporal domain $\left[0,1\right]$ in $K_{T}$ elements. The space-time DGSEM is applied with the temporal derivative projection operator \eqref{TemporalDerivativeProjectionOperator} computed by \eqref{Euler:ECState} and the spatial derivative projection operator \eqref{SpatialDerivativeProjectionOperator} is computed by the entropy conservative flux from Appendix \ref{sec:App E}. 
Furthermore, the upwind state \eqref{Euler:UpwindState} is used to compute the temporal numerical state functions at the temporal interfaces of the first and last time points. At the interior temporal interfaces the numerical state functions are computed by the state function \eqref{Euler:ECState}. We are interested in measuring the quantity  
\begin{equation}\label{FancyError}
\Xi_{S}\left(T\right):=\Delta_{S}(T)+\sum_{k=1}^{K_{S}}\left.\left\langle \jump{\varPhi\left(\mathbf{U}^{1,k}\right)}-\jump{\mathbf{W}^{1,k}}^{T}\mathbf{u}^{1,k},J^{k}\right\rangle _{\!N}\right|_{\tau=-1},
\end{equation}
where $\Delta_{S}(T)$ is given by \eqref{ErrorTortalEntropy}. In order to reach entropy stability in the sense of the equation \eqref{EC:EntropyPreservation}, the quantity $\Xi_{S}\left(T\right)$ should be zero at final $T=1$. We apply the space-time DGSEM with various different values for the number of temporal cells, number of spatial cells, temporal polynomial degree $M$, spatial polynomial degree $N$, 
and compute the quantity \eqref{FancyError}. The results are given in Table \ref{tab:PreservationTest}, we observe that the quantity \eqref{FancyError} is on the order of machine precision in all the tested configurations varying the number of space-time elements as well as the two polynomial approximation orders. This fits to the theoretical result proven in Theorem 2.       
\begin{center}
\centering
\begin{tabular}{cccc|c}\toprule[1.5pt]
\bf $K_{T}$ &\bf $K_{S}$ & \bf $\mathbb{P}^{M}$ & \bf $\mathbb{P}^{N}$ & \bf $\Xi_{S}\left(T\right)$    \\\midrule
5 & 4  &  3    & 2     & -2.54e-15       \\[0.1cm]
4 & 5  &  2    & 3     & -5.56e-16       \\[0.1cm]
2 & 2  &  3    & 4     & -1.98e-14       \\[0.1cm]
2 & 3  &  6    & 5     &  8.86e-16      \\[0.1cm]
2 & 2  &  5    & 3     &  9.65e-15       \\[0.1cm]
1 & 8  &  6    & 4     &  1.34e-15       \\[0.1cm]
\bottomrule[1.5pt]
\end {tabular}\par
\captionof{table}{The error $\Xi_{S}\left(T\right)$ given by \eqref{FancyError} at final time $T=1$ on uniform grids and for polynomials with various degree in time and space. We observe that the error is always on the order of machine precision regardless of the chosen configuration.}
\label{tab:PreservationTest}
\end{center}

\subsection{Kinetic energy preservation check}\label{sec:KEPCheck}
In this section, we investigate numerically the theoretical findings from Theorem~4. Therefore, we consider the initial value problem 
\begin{equation}\label{LinearWaveTest:Euler}
\rho_{0}=2+\sin\left(2\pi(x-t)\right),\quad,v_{0}=1,\quad p=1.
\end{equation}
As in the previous section, periodic boundary conditions are used in space and Dirichlet Boundary conditions are used in time. The spatial domain is decomposed into $K_{S}$ spatial elements and the temporal domain $\left[0,1\right]$ in $K_{T}$ elements. The space-time DGSEM is applied with the temporal derivative projection operator \eqref{TemporalDerivativeProjectionOperator} computed by \eqref{Euler:ECState} and the spatial derivative projection operator \eqref{SpatialDerivativeProjectionOperator} is computed by the entropy conservative flux from Appendix \ref{sec:App E}. We note that this flux satisfies also Jameson's conditions \eqref{Jameson2} for kinetic energy preservation. Furthermore, the upwind state \eqref{Euler:UpwindState} is used to compute the temporal numerical state functions at the temporal interfaces of the first and last time points. At the interior temporal interfaces the numerical state functions are computed by the state function \eqref{Euler:ECState}. We are interested in measuring the quantity  
\begin{align}\label{FancyError1}
\begin{split}
\Theta_{K}\left(T\right)=&
\bar{K}\left(\mathbf{U}\left(T\right)\right)-\bar{K}\left(\mathbf{U}\left(0\right)\right)-\sum_{n=1}^{K_{T}}\sum_{k=1}^{K_{S}}\frac{\Delta t^{n}}{2}\left\langle \vv{\nabla}_{\xi}\cdot\interpolation{N}{\left(\vv{\tilde{v}}^{n,k}\right)},p^{n,k}\right\rangle _{N\times M} \\
& \qquad \qquad \qquad \qquad \qquad \   -\sum_{n=1}^{K_{T}}\underset{\text{faces}}{\sum_{\text{Interior}}}\frac{\Delta t^{n}}{2}\int\limits _{\partial E^{3},N}\left\langle \avg{p^{n,k}}\jump{\tilde{v}_{\hat{n}}^{n,k}},1\right\rangle _{M}\dS \\
& \qquad \qquad \qquad \qquad \qquad \  -\sum_{k=1}^{K_{S}}\left.\left\langle \jump{\mathbf{V}^{1,k}}^{T}\mathbf{u}^{1,k},J^{k}\right\rangle _{N}\right|_{\tau=-1}, 
\end{split}
\end{align}
where $\bar{K}\left(\mathbf{U}\left(T\right)\right)$ is given by \eqref{DicreteKineticIntegrals1} and $\bar{K}\left(\mathbf{U}\left(0\right)\right)$ is given by \eqref{DicreteKineticIntegrals2}. The quantity $\Theta_{K}\left(T\right)$ should be zero at final $T=1$. We apply the space-time DGSEM with various different values for the number of temporal cells, number of spatial cells, temporal polynomial degree $M$, spatial polynomial degree $N$, 
and compute the quantity \eqref{FancyError}. The results are given in Table \ref{tab:PreservationTestKEP}, we observe that the quantity \eqref{FancyError} is on the order of machine precision in all the tested configurations varying the number of space-time elements as well as the two polynomial approximation orders. This fits to the theoretical result proven in Theorem 4.       
\begin{center}
\centering
\begin{tabular}{cccc|c}\toprule[1.5pt]
\bf $K_{T}$ &\bf $K_{S}$ & \bf $\mathbb{P}^{M}$ & \bf $\mathbb{P}^{N}$ & \bf $\Theta_{K}\left(T\right)$    \\\midrule
5 & 4  &  3    & 2     & 1.04e-15       \\[0.1cm]
4 & 5  &  2    & 3     & 2.42e-15      \\[0.1cm]
2 & 2  &  3    & 4     & 9.82e-16       \\[0.1cm]
2 & 3  &  6    & 5     &  2.44e-15       \\[0.1cm]
2 & 2  &  5    & 3     &  -1.26e-15      \\[0.1cm]
1 & 8  &  6    & 4     &  -5.20e-15       \\[0.1cm]
\bottomrule[1.5pt]
\end {tabular}\par
\captionof{table}{The error $\Theta_{K}\left(T\right)$ given by \eqref{FancyError1} at final time $T=1$ on uniform grids and for polynomials with various degree in time and space. We observe that the error is always on the order of machine precision regardless of the chosen configuration.}
\label{tab:PreservationTestKEP}
\end{center}

\section{Conclusion}\label{sec:conc}

In this work we developed a novel space-time discontinuous Galerkin spectral element approximation for non-linear conservation laws that was entropy stable. On the discrete level we constructed a numerical state for the vector of conservative variables and numerical flux functions such that the chain rule still holds and we could directly mimic the continuous entropy analysis of the system. Here, special attention was given to the temporal term as discrete entropy analysis is typically done on the semi-discrete level, e.g. \cite{carpenter_esdg}. All derivatives in space and time are approximated with a high-order derivative matrix that are summations-by-parts (SBP) operators. The SBP property was required to mimic integration-by-parts to move derivatives back and forth in the discrete variational forms that naturally arose in the DG approximation. The variational forms are approximated with high-order Legendre-Gauss-Lobatto quadrature, which was critical for the entropy stability proof to hold for a general system of conservation laws because there is \textit{no assumption} of exact integration made on the variational forms.

We examined the Euler equations in detail and derived the necessary numerical state functions in time needed by the space-time scheme. Further, we generalized the concept of kinetic energy preservation (KEP), first proposed by Jameson \cite{jameson2008}, to include the influence of the temporal approximation. This meant creating constraints on the form of the numerical state in time and found they were similar to the constraints put on the spatial numerical flux functions. We used the Euler equations in the numerical results to verify the proven properties of the space-time DG scheme. Additionally, the temporal analysis for the shallow water and ideal MHD equations are provided in appendices.


\acknowledgement{Gregor Gassner and Gero Schn\"{u}cke have been supported by the European Research Council (ERC) under the European Union's Eights Framework Program Horizon 2020 with the research project \textit{Extreme}, ERC grant agreement no. 714487. This work was partially performed on the Cologne High Efficiency Operating Platform for Sciences (CHEOPS) at the Regionales Rechenzentrum K\"{o}ln (RRZK) at the University of Cologne.}

\bibliographystyle{spmpsci}
\bibliography{mybibfile}

\begin{thebibliography}{10}
\providecommand{\url}[1]{{#1}}
\providecommand{\urlprefix}{URL }
\expandafter\ifx\csname urlstyle\endcsname\relax
  \providecommand{\doi}[1]{DOI~\discretionary{}{}{}#1}\else
  \providecommand{\doi}{DOI~\discretionary{}{}{}\begingroup
  \urlstyle{rm}\Url}\fi

\bibitem{Barth1999}
Barth, T.J.: Numerical methods for gasdynamic systems on unstructured meshes.
\newblock In: D.~Kr\"{o}ner, M.~Ohlberger, C.~Rohde (eds.) {An Introduction to
  Recent Developments in Theory and Numerics for Conservation Laws},
  \emph{Lecture Notes in Computational Science and Engineering}, vol.~5, pp.
  195--285. Springer Berlin Heidelberg (1999)

\bibitem{bohm2018}
Bohm, M., Winters, A.R., Gassner, G.J., Derigs, D., Hindenlang, F., Saur, J.:
  An entropy stable nodal discontinuous {G}alerkin method for the resistive
  {MHD} equations. {Part I}: {T}heory and numerical verification.
\newblock Journal of Computational Physics
  \textbf{doi.org/10.1016/j.jcp.2018.06.027} (2018)

\bibitem{boom2015}
Boom, P.D., Zingg, D.W.: High-order implicit time-marching methods based on
  generalized summation-by-parts operators.
\newblock {SIAM} Journal on Scientific Computing \textbf{37}(6), A2682--A2709
  (2015)

\bibitem{CHQZ:2006}
Canuto, C., Hussaini, M.Y., Quarteroni, A., Zang, T.A.: Spectral Methods:
  Fundamentals in Single Domains.
\newblock Springer (2006)

\bibitem{carlson1966}
Carlson, B.C.: Some inequalities for hypergeometric functions.
\newblock Proceedings of the American Mathematical Society \textbf{17}(1),
  32--39 (1966)

\bibitem{carpenter_esdg}
Carpenter, M.H., Fisher, T.C., Nielsen, E.J., Frankel, S.H.: Entropy stable
  spectral collocation schemes for the {N}avier--{S}tokes equations:
  Discontinuous interfaces.
\newblock SIAM Journal on Scientific Computing \textbf{36}(5), B835--B867
  (2014)

\bibitem{chan2018}
Chan, J.: On discretely entropy conservative and entropy stable discontinuous
  {G}alerkin methods.
\newblock Journal of Computational Physics \textbf{362}, 346--374 (2018)

\bibitem{Chandrashekar2012}
Chandrashekar, P.: Kinetic energy preserving and entropy stable finite volume
  schemes for compressible {E}uler and {N}avier-{S}tokes equations.
\newblock Communications in Computational Physics \textbf{14}, 1252--1286
  (2013)

\bibitem{Chandrashekar2015}
{Chandrashekar}, P., Klingenberg, C.: Entropy stable finite volume scheme for
  ideal compressible {MHD} on 2-{D} cartesian meshes.
\newblock {SIAM} Journal of Numerical Analysis \textbf{54}(2), 1313--1340
  (2016)

\bibitem{Chen2017}
Chen, T., Shu, C.W.: Entropy stable high order discontinuous {G}alerkin methods
  with suitable quadrature rules for hyperbolic conservation laws.
\newblock Journal of Computational Physics \textbf{345}, 427--461 (2017)

\bibitem{crean2018}
Crean, J., Hicken, J.E., Fern{\'a}ndez, D.c.D.R., Zingg, D.W., Carpenter, M.H.:
  Entropy-stable summation-by-parts discretization of the {E}uler equations on
  general curved elements.
\newblock Journal of Computational Physics \textbf{356}, 410--438 (2018)

\bibitem{Fernandez2014}
{Del Rey Fern\'andez}, D.C., Hicken, J.E., Zingg, D.W.: Review of
  summation-by-parts operators with simultaneous approximation terms for the
  numerical solution of partial differential equations.
\newblock Computers \& Fluids \textbf{95}(22), 171--196 (2014)

\bibitem{Derigs2017}
Derigs, D., Winters, A.R., Gassner, G.J., Walch, S., Bohm, M.: Ideal {GLM-MHD}:
  About the entropy consistent nine-wave magnetic field divergence diminishing
  ideal magnetohydrodynamics equations.
\newblock Journal of Computational Physcis \textbf{364}, 420--467 (2018)

\bibitem{diosady2015}
Diosady, L.T., Murman, S.M.: Higher-order methods for compressible turbulent
  flows using entropy variables.
\newblock In: 53rd {AIAA} Aerospace Science Meeting, p. 0294 (2015)

\bibitem{dutt1988}
Dutt, P.: Stable boundary conditions and difference schemes for
  {N}avier-{S}tokes equations.
\newblock {SIAM} Journal of Numerical Analysis \textbf{25}(2), 245--267 (1988)

\bibitem{fisher2013}
Fisher, T.C., Carpenter, M.H.: High-order entropy stable finite difference
  schemes for nonlinear conservation laws: {F}inite domains.
\newblock Journal of Computational Physics \textbf{252}, 518--557 (2013)

\bibitem{fisher2013_2}
Fisher, T.C., Carpenter, M.H., Nordstr\"{o}m, J., Yamaleev, N.K., Swanson, C.:
  Discretely conservative finite-difference formulations for nonlinear
  conservation laws in split form: {T}heory and boundary conditions.
\newblock Journal of Computational Physics \textbf{234}, 353--375 (2013)

\bibitem{Fjordholm2011}
Fjordholm, U.S., Mishra, S., Tadmor, E.: Well-blanaced and energy stable
  schemes for the shallow water equations with discontiuous topography.
\newblock Journal of Computational Physics \textbf{230}(14), 5587--5609 (2011).
\newblock \doi{10.1016/j.jcp.2011.03.042}

\bibitem{fjordholm2012}
Fjordholm, U.S., Mishra, S., Tadmor, E.: {Arbitrarily High-order Accurate
  Entropy Stable Essentially Nonoscillatory Schemes for Systems of Conservation
  Laws}.
\newblock SIAM Journal on Numerical Analysis \textbf{50}(2), 544--573 (2012).
\newblock \doi{10.1137/110836961}

\bibitem{flad2017}
Flad, D., Gassner, G.J.: On the use of kinetic energy preserving {DG}-schemes
  for large eddy simulations.
\newblock {Journal of Computational Physics} \textbf{350}, 782--795 (2017)

\bibitem{gassner2015space}
Gassner, G., Staudenmaier, M., Hindenlang, F., Atak, M., Munz, C.D.: A
  space--time adaptive discontinuous {G}alerkin scheme.
\newblock Computers \& Fluids \textbf{117}, 247--261 (2015)

\bibitem{gassner_skew_burgers}
Gassner, G.J.: A skew-symmetric discontinuous {Galerkin} spectral element
  discretization and its relation to {SBP-SAT} finite difference methods.
\newblock SIAM Journal on Scientific Computing \textbf{35}(3), A1233--A1253
  (2013)

\bibitem{Gassner2017}
Gassner, G.J., Winters, A.R., Hindenlang, F.J., Kopriva, D.A.: The {BR1} scheme
  is stable for the compressible {N}avier-{S}tokes equations.
\newblock {Journal of Scientific Computing}
  \textbf{doi.org/10.1007/s10915-018-0702-1} (2018)

\bibitem{Gassner:2016ye}
Gassner, G.J., Winters, A.R., Kopriva, D.A.: Split form nodal discontinuous
  {G}alerkin schemes with summation-by-parts property for the compressible
  {E}uler equations.
\newblock Journal of Computational Physics \textbf{327}, 39--66 (2016)

\bibitem{gassner2015}
Gassner, G.J., Winters, A.R., Kopriva, D.A.: A well balanced and entropy
  conservative discontinuous {G}alerkin spectral element method for the shallow
  water equations.
\newblock Applied Mathematics and Computation \textbf{272}(2), 291--308 (2016)

\bibitem{harten1983}
Harten, A.: On the symmetric form of systems of conservation laws with entropy.
\newblock {Journal of Computational Physics} \textbf{49}, 151--164 (1983)

\bibitem{IsmailRoe2009}
Ismail, F., Roe, P.L.: Affordable, entropy-consistent {E}uler flux functions
  {II}: Entropy production at shocks.
\newblock Journal of Computational Physics \textbf{228}(15), 5410--5436 (2009)

\bibitem{jameson2008}
Jameson, A.: Formulation of kinetic energy preserving conservative schemes for
  gas dynamics and direct numerical simulation of one-dimensional viscous
  compressible flow in a shock tube using entropy and kinetic energy preserving
  schemes.
\newblock Journal of Scientific Computing \textbf{34}(3), 188--208 (2008)

\bibitem{kopriva2006metric}
Kopriva, D.A.: Metric identities and the discontinuous spectral element method
  on curvilinear meshes.
\newblock The Journal of Scientific Computing \textbf{26}(3), 301--327 (2006)

\bibitem{Kopriva:2009nx}
Kopriva, D.A.: Implementing Spectral Methods for Partial Differential
  Equations.
\newblock Scientific Computation. Springer (2009)

\bibitem{kreiss1}
Kreiss, H.O., Olliger, J.: Comparison of accurate methods for the integration
  of hyperbolic equations.
\newblock Tellus \textbf{24}, 199--215 (1972)

\bibitem{lefloch2000}
LeFloch, P.G., Rohde, C.: High-order schemes, entropy inequalities, and
  nonclassical shocks.
\newblock {SIAM} Journal on Numerical Analysis \textbf{37}(6), 2023--2060
  (2000)

\bibitem{leveque2002}
Le{V}eque, R.J.: Finite Volume Methods for Hyperbolic Problems, vol.~31.
\newblock Cambridge University Press (2002)

\bibitem{Liu2017}
Liu, Y., Shu, C.W., Zhang, M.: Entropy stable high order discontinuous
  {G}alerkin methods for ideal compressible {MHD} on structured meshes.
\newblock Journal of Computational Physics \textbf{354}, 163--178 (2017)

\bibitem{lundquist2014}
Lundquist, T., Nordstr\"{o}m, J.: The {SBP}-{SAT} technique for initial value
  problems.
\newblock Journal of Computational Physics \textbf{270}, 86--104 (2014)

\bibitem{mock1980}
Mock, M.S.: Systems of conservation laws of mixed type.
\newblock Journal of Differential Equations \textbf{37}(1), 70--88 (1980)

\bibitem{moura2017}
Moura, R.C., Mengaldo, G., Peiro, J., Sherwin, S.J.: An {LES} setting for
  {DG}-based implicit {LES} with insights on dissipation and robustness.
\newblock In: Spectral and High Order Methods for Partial Differential
  Equations {ICOSAHOM} 2016, pp. 161--173. Springer (2017)

\bibitem{nordstrom2013}
Nordstr\"{o}m, J., Lundquist, T.: Summation-by-parts in time.
\newblock Journal of Computational Physics \textbf{251}, 487--499 (2013)

\bibitem{pirozzoli2011}
Pirozzoli, S.: Numerical methods for high-speed flows.
\newblock Annual Review of Fluid Mechanics \textbf{43}, 163--194 (2011)

\bibitem{Powell1999}
Powell, K.G., Roe, P.L., Linde, T.J., Gombosi, T.I., Zeeuw, D.L.D.: A
  solution-adaptive upwind scheme for ideal magnetohydrodynamics.
\newblock Journal of Computational Physics \textbf{154}, 284--309 (1999)

\bibitem{ranocha2018}
Ranocha, H.: Generalised summation-by-parts operators and entropy stability of
  numerical methods for hyperbolic balance laws.
\newblock Ph.D. thesis, TU~Braunschweig (2018)

\bibitem{tadmor1984}
Tadmor, E.: Skew-selfadjoint form for systems of conservation laws.
\newblock Journal of Mathematical Analysis and Applications \textbf{103}(2),
  428--442 (1984)

\bibitem{tadmor1987}
Tadmor, E.: The numerical viscosity of entropy stable schemes for systems of
  conservation laws.
\newblock Mathematics of Computation \textbf{49}(179), 91--103 (1987).
\newblock \doi{10.2307/2008251}

\bibitem{tadmor2003}
Tadmor, E.: Entropy stability theory for difference approximations of nonlinear
  conservation laws and related time-dependent problems.
\newblock Acta Numerica \textbf{12}, 451--512 (2003)

\bibitem{wintermeyer2017}
Wintermeyer, N., Winters, A.R., Gassner, G.J., Kopriva, D.A.: An entropy stable
  discontinuous {G}alerkin method for the two dimensional shallow water
  equations with discontinuous bathymetry.
\newblock {Journal of Computational Physics} \textbf{340}, 200--242 (2017)

\end{thebibliography}

\appendix

\section{Shallow Water State Values in Time}\label{sec:App SW}

The shallow water equations are
\begin{equation}
\pderivative{}{t}\begin{bmatrix}
h\\[0.1cm]
h\vv{v}\\[0.1cm]
\end{bmatrix}
+
\vv{\nabla}_x\cdot\begin{bmatrix}
h\vv{v}\\[0.1cm]
h\vv{v}\vv{v}^T + \frac{g}{2}h^2\matx{I}\\[0.1cm]
\end{bmatrix}
=\begin{bmatrix}
0\\[0.1cm]
\vv{0}\\[0.1cm]
\end{bmatrix},
\end{equation}
where $h$ is the water height, $\vv{v} = (v_1,v_2)$ are the fluid velocities, $g$ is the gravitational constant, and $\matx{I}$ is the $2\times2$ identity matrix. 
Here we ignore the bottom topography, $b$, as it is constant in time. Thus, any jumps that arise in the current analysis with simply vanish. However, complete details on the development of the spacial entropy stability analysis for the shallow water equations \textit{with} a bottom topography can be found in, e.g., \cite{Fjordholm2011,gassner2015,wintermeyer2017}.

Here we focus on the temporal entropy analysis. The condition to design a discretely entropy conservative temporal state is \eqref{DiscreteEntropyConservation}. We collect the necessary shallow water quantities for the discrete temporal entropy analysis
\begin{equation}
\begin{aligned}
\vec{U} &= (h\,,\,hv_1\,,\,hv_2)^T,\\[0.1cm]
\vec{W} &= \left(gh-\frac{1}{2}\left(v_1^2+v_2^2\right)\,,\,v_1\,,\,v_2\right)^T,\\[0.1cm]
s &= \frac{hv^2}{2} + \frac{gh^2}{2} + ghb,\\[0.1cm]
\Phi &= \frac{gh^2}{2},\\[0.1cm]
\end{aligned}
\end{equation} 
with the gravitational constant $g$. Now, from the properties of the jump operator \eqref{eq:jumpProperties} we compute the components of $\vec{U}^{\#}$ from 
\begin{equation}
\jump{\vec{W}}^T\vec{U}^{\#} = g\jump{h}U_1^\# - \avg{v_1}\jump{v_1}U_1^\#- \avg{v_2}\jump{v_2}U_1^\# + \jump{v_1}U_2^\# + \jump{v_2}U_3^{\#} = g\avg{h}\jump{h}.
\end{equation}
Solving we determine the entropy conservative temporal state to be
\begin{equation}
\vec{U}^\# = \begin{bmatrix}
\avg{h}\\[0.1cm]
\avg{h}\avg{v_1}\\[0.1cm]
\avg{h}\avg{v_2}\\[0.1cm]
\end{bmatrix}.
\end{equation}
Is it easy to verify that the temporal state $\vec{U}^{\#}$ is consistent and symmetric with respect to its arguments.

\subsection{Upwind temporal state for shallow water equations}\label{sec:ShallowWater}
Just as in the Euler case, we want to use the upwind temporal state in practice to make the space-time DG numerical scheme computationally tractable. As such, we want to show that the upwind state in time is entropy stable. For the shallow this means we must satisfy the condition
\begin{equation}\label{eq:condintionL}
\jump{\vec{W}}^T\vec{U}^* \leq \jump{\Phi} = \jump{\frac{gh^2}{2}} = g\avg{h}\jump{h}.
\end{equation}
The upwind temporal state is given by
\begin{equation}\label{eq:upwindSW}
\vec{U}^* = 
\begin{bmatrix}
h_{-}\\[0.1cm]
h_{-}\voneM\\[0.1cm]
h_{-}\vtwoM\\[0.1cm]
\end{bmatrix}.
\end{equation}
From the upwind ansatz \eqref{eq:upwindSW} it is straightforward to show that \eqref{eq:condintionL} is satisfied
\begin{equation}
\begin{aligned}
\jump{\vec{W}}^T\vec{U}^* &= gh_{-}\jump{h}-h_{-}\avg{v_1}\jump{v_1}-h_{-}\avg{v_2}\jump{v_2} + h_{-}\voneM\jump{v_1}+ h_{-}\vtwoM\jump{v_2}\\
&= gh_{-}\jump{h} - h_{-}\jump{v_1}\left(\avg{v_1}-\voneM\right) - h_{-}\jump{v_2}\left(\avg{v_2}-\vtwoM\right)\\
&= gh_{-}\jump{h} - \frac{h_{-}}{2}\left(\jump{v_1}\right)^2 - \frac{h_{-}}{2}\left(\jump{v_2}\right)^2 \pm \frac{g}{2}h_{+}\jump{h}\\
&= g\avg{h}\jump{h} + \frac{g}{2}h_{-}\jump{h} - \frac{g}{2}h_{+}\jump{h} - \frac{h_{-}}{2}\left(\jump{v_1}\right)^2 - \frac{h_{-}}{2}\left(\jump{v_2}\right)^2\\
&= g\avg{h}\jump{h} - \frac{g}{2}\left(\jump{h}\right)^2 - \frac{h_{-}}{2}\left(\jump{v_1}\right)^2 - \frac{h_{-}}{2}\left(\jump{v_2}\right)^2 \\
&\leq g\avg{h}\jump{h} 
\end{aligned}
\end{equation}
where we use the identity \eqref{eq:avgProperty} twice, add and subtract a scaled value of $\jump{h}$, and use the physical assumption of positive water height, $h>0$. Thus, the upwind temporal state for the shallow water equations is entropy stable.

\section{Ideal magnetohydrodynamics time state evaluation}\label{sec:App B}

Here we consider a slightly modified version of the ideal magnetohydrodynamic (MHD) equations 
\begin{equation}\label{eq:modifiedMHD}
\pderivative{}{t}\begin{bmatrix}
\rho\\[0.2cm]
\rho\vv{v}\\[0.2cm]
E\\[0.2cm]
\vv{B}\\[0.2cm]
\end{bmatrix}
+
\vv{\nabla}_x\cdot\begin{bmatrix}
\rho\vv{v}\\[0.2cm]
\rho\vv{v}\vv{v}^T + \left(p + \frac{1}{2}|\vv{B}|^2\right)\matx{I} - \vv{B}\vv{B}^T\\[0.2cm]
\vv{v}\left(E+p+\frac{1}{2}|\vv{B}|^2\right)-\vv{B}\left(\vv{v}\cdot\vv{B}\right)\\[0.2cm]
\vv{v}\vv{B}^T - \vv{B}\vv{v}^T\\[0.2cm]
\end{bmatrix}
=
\left(\vv{\nabla}_x\cdot\vv{B}\right)\begin{bmatrix}
0\\[0.2cm]
\vv{B}\\[0.2cm]
\vv{v}\cdot\vv{B}\\[0.2cm]
\vv{v}\\[0.2cm]
\end{bmatrix}
=
(\vv{\nabla}_x\cdot\vv{B})\boldsymbol\Upsilon,
\end{equation}
where $\rho$ is the density, $\vv{v} = (v_1\,,\,v_2\,,\,v_3)^T$ are the fluid velocities, $\vv{B}=(B_1\,,\,B_2\,,\,B_3)^T$ are the magnetic fields, $E$ is the total energy, and $\matx{I}$ is the $3\times3$ identity matrix. With the assumption of an ideal gas the pressure is
\begin{equation}
p = (\gamma-1)\left(E - \frac{\rho}{2}|\vv{v}|^2-\frac{1}{2}|\vv{B}|^2\right).
\end{equation}
Also, the divergence-free constraint for magnetized fluids is incorporated into the model \eqref{eq:modifiedMHD} as a non-conservative term \cite{Barth1999,Powell1999}. This non-conservative term is an important aspect of the spatial entropy analysis particularly when $\vv{\nabla}_x\cdot\vv{B}\ne 0$ see, e.g. \cite{bohm2018,Chandrashekar2015,Derigs2017,Liu2017}. We address how this non-conservative term proportional to $\vv{\nabla}_x\cdot\vv{B}$ affects the spatial part of the DG approximation in Appendix \ref{sec:App B3}.

For now we concern ourselves with the temporal entropy analysis and collect the necessary quantities for the ideal MHD equations
\begin{equation}
\begin{aligned}
\vec{U} &= (\rho\,,\,\rho v_1\,,\,\rho v_2\,,\,\rho v_3\,,\,E\,,\,B_1\,,\,B_2\,,\,B_3)^T,\\[0.1cm]
\vec{W} &= \left(\frac{\gamma-\varsigma}{\gamma-1}-\beta |\vv{v}|^2\,,\,2\beta v_1\,,\,2\beta v_2\,,\,2\beta v_3\,,\,-2\beta\,,\,2\beta B_1\,,\,2\beta B_2\,,\,2\beta B_3\right)^T,\\[0.1cm]
s &= -\frac{\rho \varsigma}{\gamma-1},\\[0.1cm]
\Phi &= \rho + \beta\left|\vec{B}\right|^2\\[0.1cm]
\end{aligned}
\end{equation} 
where the physical entropy $\varsigma$ and $\beta$ are defined as in the Euler case in Section \ref{sec:EulerStuff}.

To determine an entropy conservative temporal state we must find a two-point function $\vec{u}^\#$ such that
\begin{equation}\label{eq:ECMHD}
\jump{\vec{W}}^T\vec{U}^\# = \jump{\Phi}= \jump{\rho + \beta\left|\vec{B}\right|^2}.
\end{equation}
To do so, we first compute the jump in the entropy variables
\begin{equation}\label{eq:entjumpMHD}
\jump{\vec{W}} = \begin{bmatrix}
\frac{\jump{\rho}}{\rho^{\ln}}+\frac{\jump{\beta}}{\beta^{\ln}(\gamma-1)} - 2\avg{\beta}\left(\avg{v_1}\jump{v_1}+\avg{v_2}\jump{v_2}+\avg{v_3}\jump{v_3}\right)-\jump{\beta}\left(\avg{v_1^2}+\avg{v_2^2}+\avg{v_3^2}\right)\\[0.1cm]
2\avg{v_1}\jump{\beta}+2\avg{\beta}\jump{v_1}\\[0.1cm]
2\avg{v_2}\jump{\beta}+2\avg{\beta}\jump{v_2}\\[0.1cm]
2\avg{v_3}\jump{\beta}+2\avg{\beta}\jump{v_3}\\[0.1cm]
-2\jump{\beta}\\[0.1cm]
2\avg{B_1}\jump{\beta}+2\avg{\beta}\jump{B_1}\\[0.1cm]
2\avg{B_2}\jump{\beta}+2\avg{\beta}\jump{B_2}\\[0.1cm]
2\avg{B_3}\jump{\beta}+2\avg{\beta}\jump{B_3}\\[0.1cm]
\end{bmatrix}.
\end{equation}
Then we compute the left hand side of \eqref{eq:ECMHD} to be
\begin{equation}
\begin{aligned}
\jump{\vec{W}}^T\vec{U}^\# =& U_1^\#\left(\frac{\jump{\rho}}{\rho^{\ln}}+\frac{\jump{\beta}}{\beta^{\ln}(\gamma-1)} - 2\avg{\beta}\left(\avg{v_1}\jump{v_1}+\avg{v_2}\jump{v_2}+\avg{v_3}\jump{v_3}\right)-\jump{\beta}\left(\avg{v_1^2}+\avg{v_2^2}+\avg{v_3^2}\right)\right)\\
&+U_2^\#\left(2\avg{v_1}\jump{\beta}+2\avg{\beta}\jump{v_1}\right) + U_3^\#\left(2\avg{v_2}\jump{\beta}+2\avg{\beta}\jump{v_2}\right) + U_4^\#\left(2\avg{v_3}\jump{\beta}+2\avg{\beta}\jump{v_3}\right)\\
&-2U_5^\#\jump{\beta} + U_6^\#\left(2\avg{B_1}\jump{\beta}+2\avg{\beta}\jump{B_1}\right) + U_7^\#\left(2\avg{B_2}\jump{\beta}+2\avg{\beta}\jump{B_2}\right)\\
&+ U_8^\#\left(2\avg{B_3}\jump{\beta}+2\avg{\beta}\jump{B_3}\right).
\end{aligned}
\end{equation}
Next, we expand the right hand side of \eqref{eq:ECMHD} 
\begin{equation}
\jump{\rho + \beta \left|\vec{B}\right|^2} = \jump{\rho} + \left(\avg{B_1^2}+\avg{B_2^2}+\avg{B_3^2}\right)\jump{\beta} + 2\avg{\beta}\left(\avg{B_1}\jump{B_1}+\avg{B_2}\jump{B_2}+\avg{B_3}\jump{B_3}\right).
\end{equation}
Next, we collect the individual jump terms to create conditions on the entropy conservative temporal state function $\vec{U}^\#$:
\begin{equation}
\begin{aligned}
\jump{\rho}:\quad& \frac{U_1^\#}{\rho^{\ln}} = 1\\[0.1cm]
\jump{v_1}:\quad& -2\avg{\beta}\avg{v_1}U_1^\# + 2\avg{\beta}U_2^\# = 0\\[0.1cm]
\jump{v_2}:\quad& -2\avg{\beta}\avg{v_2}U_1^\# + 2\avg{\beta}U_3^\# = 0\\[0.1cm]
\jump{v_3}:\quad& -2\avg{\beta}\avg{v_3}U_1^\# + 2\avg{\beta}U_4^\# = 0\\[0.1cm]
\jump{B_1}:\quad& 2\avg{\beta}U_6^\# = 2\avg{\beta}\avg{B_1}\\[0.1cm]
\jump{B_2}:\quad& 2\avg{\beta}U_7^\# = 2\avg{\beta}\avg{B_2}\\[0.1cm]
\jump{B_3}:\quad& 2\avg{\beta}U_8^\# = 2\avg{\beta}\avg{B_3}\\[0.1cm]
\jump{\beta}:\quad&U_1^\#\left(\frac{1}{\beta^{\ln}(\gamma-1)}-\avg{v_1^2}-\avg{v_2^2}-\avg{v_3^2}\right)+2U_2^\#\avg{v_1}+2U_3^\#\avg{v_2}+2U_4^\#\avg{v_3}\\
& -2U_5^\# + 2U_6^\#\avg{B_1}+ 2U_7^\#\avg{B_2}+ 2U_8^\#\avg{B_3} = \avg{B_1^2}+\avg{B_2^2}+\avg{B_3^2}
\end{aligned}
\end{equation}
It is then straightforward to determine
\begin{equation}
\vec{U}^\# = \begin{bmatrix}
\rho^{\ln}\\[0.1cm]
\rho^{\ln}\avg{v_1}\\[0.1cm]
\rho^{\ln}\avg{v_2}\\[0.1cm]
\rho^{\ln}\avg{v_3}\\[0.1cm]
U_5^\#\\[0.1cm]
\avg{B_1}\\[0.1cm]
\avg{B_2}\\[0.1cm]
\avg{B_3}\\[0.1cm]
\end{bmatrix},
\end{equation}
with the fifth term
\begin{equation}
\begin{aligned}
U_5^\# = \frac{\rho^{\ln}}{2\beta^{\ln}(\gamma-1)}&+\rho^{\ln}\left(\avg{v_1}^2+\avg{v_2}^2+\avg{v_3}^2-\frac{1}{2}\left(\avg{v_1^2}+\avg{v_2^2}+\avg{v_3^2}\right)\right)\\
&+\avg{B_1}^2+\avg{B_2}^2+\avg{B_3}^2-\frac{1}{2}\left(\avg{B_1^2}+\avg{B_2^2}+\avg{B_3^2}\right).
\end{aligned}
\end{equation}
We see that the function $\vec{U}^{\#}$ is symmetric with respect to its arguments and is consistent to the state vector if the left and right states are the same as
\begin{equation}
\vec{U}^{\#} = \begin{bmatrix}
\rho\\[0.1cm]
\rho v_1\\[0.1cm]
\rho v_2\\[0.1cm]
\rho v_3\\[0.1cm]
\frac{\rho}{2\beta(\gamma-1)} + \frac{\rho}{2}\|\vv{v}\|^2+\frac{1}{2}\|\vv{B}\|^2\\[0.1cm]
B_1\\[0.1cm]
B_2\\[0.1cm]
B_3\\[0.1cm]
\end{bmatrix}
= \begin{bmatrix}
\rho\\[0.1cm]
\rho v_1\\[0.1cm]
\rho v_2\\[0.1cm]
\rho v_3\\[0.1cm]
\frac{p}{\gamma-1} + \frac{\rho}{2}\|\vv{v}\|^2+\frac{1}{2}\|\vv{B}\|^2\\[0.1cm]
B_1\\[0.1cm]
B_2\\[0.1cm]
B_3\\[0.1cm]
\end{bmatrix}
= \begin{bmatrix}
\rho\\[0.1cm]
\rho v_1\\[0.1cm]
\rho v_2\\[0.1cm]
\rho v_3\\[0.1cm]
E\\[0.1cm]
B_1\\[0.1cm]
B_2\\[0.1cm]
B_3\\[0.1cm]
\end{bmatrix}
=
\vec{U}.
\end{equation}
Again, the entropy conservative temporal state fully couples all time levels. So, to make the entropy stable space-time DG scheme computationally attractive we next demonstrate that the upwind temporal state for the ideal MHD is entropy stable in time.

\subsection{Upwind temporal ideal MHD state}\label{sec:UpwindMHD}

To demonstrate entropy stability of the upwind temporal state the condition \eqref{eq:EulerCondition} becomes 
\begin{equation}\label{eq:MHDEnt}
\jump{\vec{W}}^T\vec{U}^* \leq \jump{\Phi}= \jump{\rho + \left|\vec{B}\right|^2},
\end{equation}
with the upwind flux taken to be
\begin{equation}
\vec{U}^* = \begin{bmatrix}
\rhoM\\[0.1cm]
\rhoM\voneM\\[0.1cm]
\rhoM\vtwoM\\[0.1cm]
\rhoM\vthrM\\[0.1cm]
E_{-}\\[0.1cm]
\BoneM\\[0.1cm]
\BtwoM\\[0.1cm]
\BthrM\\[0.1cm]
\end{bmatrix},\quad E_{-} = \frac{\rhoM}{2\betaM(\gamma-1)} + \frac{\rhoM}{2}\left(\voneM^2+\vtwoM^2+\vthrM^2\right) +  \frac{1}{2}\left(\BoneM^2+\BtwoM^2+\BthrM^2\right).
\end{equation}
We already have the jump in entropy variables \eqref{eq:entjumpMHD} so we can immediately compute the left hand side
\begin{equation}
\begin{aligned}
\jump{\vec{W}}^T\vec{U}^* =& \rhoM\left(\frac{\jump{\rho}}{\rho^{\ln}}+\frac{\jump{\beta}}{\beta^{\ln}(\gamma-1)} - 2\avg{\beta}\left(\avg{v_1}\jump{v_1}+\avg{v_2}\jump{v_2}+\avg{v_3}\jump{v_3}\right)-\jump{\beta}\left(\avg{v_1^2}+\avg{v_2^2}+\avg{v_3^2}\right)\right)\\
&+\rhoM\voneM\left(2\avg{v_1}\jump{\beta}+2\avg{\beta}\jump{v_1}\right) 
+ \rhoM\vtwoM\left(2\avg{v_2}\jump{\beta}+2\avg{\beta}\jump{v_2}\right) 
+ \rhoM\vthrM\left(2\avg{v_3}\jump{\beta}+2\avg{\beta}\jump{v_3}\right)\\
&-2E_{-}\jump{\beta} + \BoneM\left(2\avg{B_1}\jump{\beta}+2\avg{\beta}\jump{B_1}\right) + \BtwoM\left(2\avg{B_2}\jump{\beta}+2\avg{\beta}\jump{B_2}\right)\\
&+ \BthrM\left(2\avg{B_3}\jump{\beta}+2\avg{\beta}\jump{B_3}\right).
\end{aligned}
\end{equation}
Just as before we group together each of the jumps to make it easier to see how terms simplify. Further, we reuse \eqref{eq:avgProperty} in the momentum equations as well as add and subtract one from the density jump condition to facilitate later manipulations:
\begin{equation}
\begin{aligned}
\jump{\rho}:\quad& \frac{\rhoM}{\rho^{\ln}}\pm 1 = 1-\frac{1}{\rho^{\ln}}(\rho^{\ln} - \rhoM)\\[0.1cm]
\jump{v_1}:\quad& -2\avg{\beta}\avg{v_1}\rhoM + 2\avg{\beta}\rhoM\voneM =  -2\rhoM\avg{\beta}(\avg{v_1}-\voneM)=-\rhoM\avg{\beta}\jump{v_1}\\[0.1cm]
\jump{v_2}:\quad& -2\avg{\beta}\avg{v_2}\rhoM + 2\avg{\beta}\rhoM\vtwoM = -2\rhoM\avg{\beta}(\avg{v_2}-\vtwoM)=-\rhoM\avg{\beta}\jump{v_2}\\[0.1cm]
\jump{v_3}:\quad& -2\avg{\beta}\avg{v_3}\rhoM + 2\avg{\beta}\rhoM\vthrM = -2\rhoM\avg{\beta}(\avg{v_3}-\vthrM) = -\rhoM\avg{\beta}\jump{v_3}\\[0.1cm]
\jump{B_1}:\quad& 2\avg{\beta}\BoneM\\[0.1cm]
\jump{B_2}:\quad& 2\avg{\beta}\BtwoM\\[0.1cm]
\jump{B_3}:\quad& 2\avg{\beta}\BthrM\\[0.1cm]
\jump{\beta}:\quad& \rhoM\left(\frac{1}{\beta^{\ln}(\gamma-1)}-\avg{v_1^2}-\avg{v_2^2}-\avg{v_3^2}\right)+2\rhoM\voneM\avg{v_1}+2\rhoM\vtwoM\avg{v_2}+2\rhoM\vthrM\avg{v_3}\\
& -\left(\frac{\rhoM}{\betaM(\gamma-1)} + \rhoM\left(\voneM^2+\vtwoM^2+\vthrM^2\right) +  \left(\BoneM^2+\BtwoM^2+\BthrM^2\right)\right) \\
&+ 2\BoneM\avg{B_1}+ 2\BtwoM\avg{B_2}+ 2\BthrM\avg{B_3}\\
=&\frac{\rhoM}{\beta^{\ln}\betaM(\gamma-1)}(\betaM-\beta^{\ln})+\rhoM\left(-\avg{v_1^2}-\avg{v_2^2}-\avg{v_3^2}+2\voneM\avg{v_1}+2\vtwoM\avg{v_2}+2\vthrM\avg{v_3}\right.\\
&\left.-\voneM^2-\vtwoM^2-\vthrM^2\right) - \BoneM^2-\BtwoM^2-\BthrM^2+ 2\BoneM\avg{B_1}+ 2\BtwoM\avg{B_2}+ 2\BthrM\avg{B_3}
\end{aligned}
\end{equation}
We reuse the manipulation on each velocity field \eqref{eq:velManip} from the Euler case as well as some simple cancellation on the magnetic field terms to further simplify the $\jump{\beta}$ term when substituting $E_{-}$ to find
\begin{equation}
\jump{\beta}:\quad -\frac{\rhoM}{\beta^{\ln}\betaM(\gamma-1)}(\beta^{\ln}-\betaM)-\frac{\rhoM}{2}\left(\left(\jump{v_1}\right)^2+\left(\jump{v_2}\right)^2+\left(\jump{v_3}\right)^2\right)
+ \BoneM\BoneP+ \BtwoM\BtwoP+ \BthrM\BthrP.
\end{equation}
Now we revisit the complete left hand side of \eqref{eq:MHDEnt}
\begin{equation}
\begin{aligned}
\jump{\vec{W}}^T\vec{U}^* =& \jump{\rho}-\frac{\jump{\rho}}{\rho^{\ln}}(\rho^{\ln}-\rhoM)-\frac{\rhoM\jump{\beta}}{\beta^{\ln}\betaM(\gamma-1)}(\beta^{\ln}-\betaM) - \rhoM\left(\left(\jump{v_1}\right)^2+\left(\jump{v_2}\right)^2+\left(\jump{v_3}\right)^2\right)\left(\avg{\beta}+\frac{1}{2}\jump{\beta}\right)\\
&+\jump{\beta}\left(\BoneM\BoneP+ \BtwoM\BtwoP+ \BthrM\BthrP\right) + 2\avg{\beta}\left(\BoneM\jump{B_1}+\BtwoM\jump{B_2}+\BthrM\jump{B_3}\right).
\end{aligned}
\end{equation}
The first line simplifies as we reuse the results \eqref{eq:betaAvgProp} and \eqref{eq:logSimplify} from the temporal entropy stability proof from the Euler case to find
\begin{equation}
\begin{aligned}
\jump{\vec{W}}^T&\vec{U}^* = \jump{\rho}-\frac{\left(\jump{\rho}\right)^2}{2\rho^{\ln}}-\frac{\rhoM\left(\jump{\beta}\right)^2}{\beta^{\ln}\betaM(\gamma-1)} - \rhoM\betaP\left(\left(\jump{v_1}\right)^2+\left(\jump{v_2}\right)^2+\left(\jump{v_3}\right)^2\right)\\
&+\jump{\beta}\left(\BoneM\BoneP+ \BtwoM\BtwoP+ \BthrM\BthrP\right) + 2\avg{\beta}\left(\BoneM\jump{B_1}+\BtwoM\jump{B_2}+\BthrM\jump{B_3}\right).
\end{aligned}
\end{equation}

All that remains is the handle the magnetic field terms which requires adding zero in a clever way
\begin{equation}
\resizebox{\textwidth}{!}{$
\begin{aligned}
\jump{\vec{W}}^T\vec{U}^* =&\jump{\rho}-\frac{\left(\jump{\rho}\right)^2}{2\rho^{\ln}}-\frac{\rhoM\left(\jump{\beta}\right)^2}{\beta^{\ln}\betaM(\gamma-1)} - \rhoM\betaP\left(\left(\jump{v_1}\right)^2+\left(\jump{v_2}\right)^2+\left(\jump{v_3}\right)^2\right)\\
&+\jump{\beta}\left(\BoneM\BoneP+ \BtwoM\BtwoP+ \BthrM\BthrP\right) + 2\avg{\beta}\left(\BoneM\jump{B_1}+\BtwoM\jump{B_2}+\BthrM\jump{B_3}\right)\\
&\pm\avg{\beta}\left(\BoneP\jump{B_1}+\BtwoP\jump{B_2}+\BthrP\jump{B_3}\right)\\
=&\jump{\rho}-\frac{\left(\jump{\rho}\right)^2}{2\rho^{\ln}}-\frac{\rhoM\left(\jump{\beta}\right)^2}{\beta^{\ln}\betaM(\gamma-1)} - \rhoM\betaP\left(\left(\jump{v_1}\right)^2+\left(\jump{v_2}\right)^2+\left(\jump{v_3}\right)^2\right)\\
& + 2\avg{\beta}\left(\avg{B_1}\jump{B_1}+\avg{B_2}\jump{B_2}+\avg{B_3}\jump{B_3}\right)-\avg{\beta}\left(\left(\jump{B_1}\right)^2+\left(\jump{B_2}\right)^2+\left(\jump{B_3}\right)^2\right)\\
&+\jump{\beta}\left(\BoneM\BoneP+ \BtwoM\BtwoP+ \BthrM\BthrP\right)\\
=& \jump{\rho}-\frac{\left(\jump{\rho}\right)^2}{2\rho^{\ln}}-\frac{\rhoM\left(\jump{\beta}\right)^2}{\beta^{\ln}\betaM(\gamma-1)} - \rhoM\betaP\left(\left(\jump{v_1}\right)^2+\left(\jump{v_2}\right)^2+\left(\jump{v_3}\right)^2\right)\\
& + \avg{\beta}\left(\jump{B_1^2}+\jump{B_2^2}+\jump{B_3^2}\right)-\avg{\beta}\left(\left(\jump{B_1}\right)^2+\left(\jump{B_2}\right)^2+\left(\jump{B_3}\right)^2\right)\\
&+\jump{\beta}\left(\BoneM\BoneP+ \BtwoM\BtwoP+ \BthrM\BthrP\right)\\
=& \jump{\rho + \beta\ \left|\vec{B} \right|^2}-\frac{\left(\jump{\rho}\right)^2}{2\rho^{\ln}}-\frac{\rhoM\left(\jump{\beta}\right)^2}{\beta^{\ln}\betaM(\gamma-1)} - \rhoM\betaP\left(\left(\jump{v_1}\right)^2+\left(\jump{v_2}\right)^2+\left(\jump{v_3}\right)^2\right)\\
& - \jump{\beta}\left(\avg{B_1^2}+\avg{B_2^2}+\avg{B_3^2}\right)-\avg{\beta}\left(\left(\jump{B_1}\right)^2+\left(\jump{B_2}\right)^2+\left(\jump{B_3}\right)^2\right)\\
&+\jump{\beta}\left(\BoneM\BoneP+ \BtwoM\BtwoP+ \BthrM\BthrP\right)\\
=& \jump{\rho + \beta\ \left|\vec{B} \right|^2}-\frac{\left(\jump{\rho}\right)^2}{2\rho^{\ln}}-\frac{\rhoM\left(\jump{\beta}\right)^2}{\beta^{\ln}\betaM(\gamma-1)} - \rhoM\betaP\left(\left(\jump{v_1}\right)^2+\left(\jump{v_2}\right)^2+\left(\jump{v_3}\right)^2\right)\\
& - \left(\avg{\beta}+\frac{1}{2}\jump{\beta}\right)\left(\left(\jump{B_1}\right)^2+\left(\jump{B_2}\right)^2+\left(\jump{B_3}\right)^2\right)\\
=& \jump{\rho + \beta\ \left|\vec{B} \right|^2}-\frac{\left(\jump{\rho}\right)^2}{2\rho^{\ln}}-\frac{\rhoM\left(\jump{\beta}\right)^2}{\beta^{\ln}\betaM(\gamma-1)} - \rhoM\betaP\left(\left(\jump{v_1}\right)^2+\left(\jump{v_2}\right)^2+\left(\jump{v_3}\right)^2\right) - \betaP\left(\left(\jump{B_1}\right)^2+\left(\jump{B_2}\right)^2+\left(\jump{B_3}\right)^2\right)\\
\leq&\jump{\rho + \beta\ \left|\vec{B} \right|^2}.
\end{aligned}
$}
\end{equation}
Again, under the physical assumptions of positive density and temperature the upwind temporal state is entropy stable for the ideal MHD equations.

\subsection{Spatial part for Magnetohydrodynamics}\label{sec:App B3}

For the ideal magnetohydrodynamic (MHD) equations the entropy analysis is slightly more complicated due to the divergence-free constraint on the magnetic field
\begin{equation}
\vv{\nabla}_x\cdot\vv{B}=0.
\end{equation}
The divergence-free constraint must be incorporated into the ideal MHD equations as a non-conservative term in order for the discrete entropy analysis to mimic the continuous entropy analysis, e.g. \cite{Barth1999,bohm2018,Chandrashekar2015,Derigs2017,Liu2017}, as it can be that $\vv{\nabla}_x\cdot\vv{B}\ne 0$ for a numerical discretization.

The inclusion of this non-conservative term alters the derivation of the entropy conservative (or entropy stable) numerical flux in the three spatial directions. Moreover, the entropy flux potential $\psi$ as shown in \eqref{EntropyFunctional2} contains an influence from this non-conservative term and has the form
\begin{equation}\label{DiscreteEntropyConditionMHD}
\jump{\vec{W}}^{T}{\vec{F}}_i^{\text{EC}} = \jump{\Psi_i} 
\quad \text{with} \quad
\Psi_i =\vec{W}^{T} \vec{F}_i - {F}_i^{s} +\theta B_i,
\end{equation} 
for $i=1,2,3$ and
\begin{equation}\label{eq:thetaDef}
\theta = \vec{w}^T\boldsymbol\Upsilon
= 2\beta(\vv{v}\cdot\vv{B}).
\end{equation}
See \cite{Chandrashekar2015,Derigs2017} for complete details on the derivation of entropy stable spatial flux functions for the ideal MHD equations.

The DGSEM approximation on general curvilinear meshes discussed briefly in Section \ref{sec:DG} and fully presented in Gassner et al. \cite{Gassner2017} remains largely the same for the ideal MHD equations. The only issue is how to compute, at high-order, the volume contributions and surface coupling of the non-conservative term proportional to the divergence-free condition present in \eqref{eq:modifiedMHD}. We will present and briefly discuss the modified spatial operator and how it fits into the present analysis in this work. We note that complete details on the DG approximation for the MHD equations can be found in Bohm et al. \cite{bohm2018}.

The spatial operator from \eqref{SpatialPart} for the ideal MHD equations takes the form
\begin{equation}\label{SpaceOperatorMHD}
\begin{aligned}
A_{S}\!\left(\mathbf{U},\boldsymbol{\varphi}\right)
:=& \quad\frac{\Delta t}{2}\left\langle \Dprojection{N}\cdot\blockvec{\tilde{\mathbf{F}}}{}^{\text{EC}},\boldsymbol{\varphi}\right\rangle_{\!N\times M}+\frac{\Delta t}{2}\int\limits_{\partial E^{3},N}\left\langle \boldsymbol{\varphi}^{T}\left(\tilde{\mathbf{F}}^{*}-\left(\blockveccon{\mathbf{F}}\cdot\hat{n}\right)\right),1\right\rangle_{\!M}\dS\\[0.1cm]
&+\frac{\Delta t}{2}\left\langle \boldsymbol\Upsilon\Dprojection{N}\cdot\vv{\tilde{B}},\boldsymbol{\varphi}\right\rangle_{\!N\times M}+\frac{\Delta t}{2}\int\limits_{\partial E^{3},N}\left\langle \boldsymbol{\varphi}^{T}\left(\left(\boldsymbol\Upsilon\left(\vv{B}\cdot\hat{n}\right)\right)^{\!\!\Diamond}-\boldsymbol\Upsilon\left(\vv{{B}}\cdot\hat{n}\right)\right),1\right\rangle_{\!M}\dS,
\end{aligned}
\end{equation}
where we have additional contributions in the volume and at the surface of the non-conservative terms. The non-conservative volume contribution is
\begin{align}\label{NonConsDerivativeProjectionOperator}
\begin{split}
\boldsymbol\Upsilon\Dprojection{N}\cdot\vv{\tilde{B}}_{\sigma ijk}
:=2\sum_{m=0}^{N}\quad\,& 
\mathcal{D}_{im}\left(\boldsymbol\Upsilon_{ijk}\left(\vv{B}_{mjk}\cdot\avg{J\vv{a}^{1}}_{\left(i,m\right)jk}\right)\right) \\
\quad +  & \,\mathcal{D}_{jm}\left(\boldsymbol\Upsilon_{ijk}\left(\vv{B}_{imk}\cdot\avg{J\vv{a}^{2}}_{i(j,m)k}\right)\right) \\
\quad  +& \, \mathcal{D}_{km}\left(\boldsymbol\Upsilon_{ijk}\left(\vv{B}_{ijm}\cdot\avg{J\vv{a}^{3}}_{ij(k,m)}\right)\right), 
\end{split}
\end{align}
where $\sigma = 0,\ldots,M$. If we select the test function $\boldsymbol\phi$ to be the interpolant of the entropy variables, $\vec{W}$, then the volume contributions become the entropy flux at the boundary \cite{bohm2018}, i.e., 
\begin{equation}\label{EntropyVolumeContributionMHD}
\left\langle \Dprojection{N}\cdot\blockvec{\tilde{\mathbf{F}}}{}^{\text{EC}},\mathbf{W}\right\rangle_{\!N\times M} + \left\langle \boldsymbol\Upsilon\Dprojection{N}\cdot\left(\vv{\tilde{B}}\right),\vec{W}\right\rangle_{\!N\times M}=\int\limits_{\partial E^{3},N}\left\langle \vv{\tilde{F}}_{\hat{n}}^{s},1\right\rangle_{\!M}\dS.
\end{equation}
For the non-conservative surface contributions we define the interface coupling to be
\begin{equation}\label{NonConsSurface}
\left(\boldsymbol\Upsilon\left(\vv{B}\cdot\hat{n}\right)\right)^{\!\!\Diamond} = \boldsymbol\Upsilon^{-}\avg{\vv{B}}\cdot\hat{n},
\end{equation}
where the ``$-$'' denotes that the value is coming from the primary element. We typically suppress this notation unless it is necessary to make the manipulations clear. 
Then the last term in \eqref{SpaceOperatorMHD} becomes
\begin{equation}
\begin{aligned}
\int\limits_{\partial E^{3},N}\left\langle \vec{W}^{T}\left(\left(\boldsymbol\Upsilon\left(\vv{B}\cdot\hat{n}\right)\right)^{\!\!\Diamond}-\boldsymbol\Upsilon\left(\vv{{B}}\cdot\hat{n}\right)\right),1\right\rangle_{\!M}\dS &= \int\limits_{\partial E^{3},N}\left\langle \left(\vec{W}^{-}\right)^{T}\boldsymbol\Upsilon^{-}\left(\avg{\vv{B}}-\vv{{B}}\right)\cdot\hat{n},1\right\rangle_{\!M}\dS\\[0.1cm]
&=\frac{1}{2}\int\limits_{\partial E^3,N}\left\langle \theta^{-}\jump{\vv{B}}\cdot\hat{n},1\right\rangle_{\!M}\dS\\[0.1cm]
&=\frac{1}{2}\int\limits_{\partial E^3,N}\left\langle \theta\jump{\vv{B}}\cdot\hat{n},1\right\rangle_{\!M}\dS,
\end{aligned}
\end{equation}
where we use the result \eqref{eq:avgProperty} and the definition of $\theta$ from \eqref{eq:thetaDef}. Thus, the property \eqref{EntropyVolumeContribution} becomes
\begin{equation}\label{EntropyVolumeContributionMHD}
A_{S}\!\left(\blockvec{\mathbf{F}},\vec{W}\right):=\quad\frac{\Delta t}{2}\int\limits_{\partial E^{3},N}\left\langle \mathbf{W}^{T}\tilde{\mathbf{F}}_{\hat{n}}^{*}-\mathbf{W}^{T}\tilde{\mathbf{F}}_{\hat{n}}+\tilde{F}_{\hat{n}}^{s}+\frac{\theta}{2}\jump{\vv{B}}\cdot\hat{n},1\right\rangle_{\!M}\dS,
\end{equation}
in the case of the ideal MHD equations.

If we again introduce the contravariant flux notation \eqref{ContravariantSurfaceFlux} then summing over all the interior faces gives us
\begin{equation}\label{SurfaceEntropyConditionMHD}
\sum_{n=1}^{K_{T}}\underset{\text{faces}}{\sum_{\text{Interior}}}\frac{\Delta t^{n}}{2}\int\limits_{\partial E^{3},N}\left\langle \jump{\mathbf{W}^{n,k}}^{T}\tilde{\mathbf{F}}_{\hat{n}}^{n,k,*}-\jump{\left(\mathbf{W}^{n,k}\right)^{T}\tilde{\mathbf{F}}_{\hat{n}}^{n,k}}+\jump{\tilde{F}_{\hat{n}}^{s}}+\avg{\theta}\jump{\vv{B}}\cdot\hat{n},1\right\rangle_{\!M}\dS=0,
\end{equation}
due to the entropy stable numerical fluxes for the MHD equations in each Cartesian direction \eqref{DiscreteEntropyConditionMHD}. We note the additional surface term for the magnetic field components $\vv{B}$ comes from the orientation of the jump operator and that the physical normal vector $\hat{n}$ is defined uniquely at the interface \cite{bohm2018}. By summing over all space-time elements and applying the inequality \eqref{SurfaceEntropyCondition}, we obtain 
\begin{equation}\label{SpatialStabilityMHD}
\begin{aligned}
& 
\sum_{n=1}^{K_{T}}\sum_{k=1}^{K_{S}}A_{S}\!\left(\blockvec{\mathbf{F}}{}^{n,k},\mathbf{W}^{n,k}\right) \\ 
&=  \sum_{n=1}^{K_{T}}\underset{\text{faces}}{\sum_{\text{Boundary}}}\frac{\Delta t^{n}}{2}\int\limits_{\partial E^{3},N}\left\langle \left(\mathbf{W}^{n,k}\right)^{T}\tilde{\mathbf{F}}_{\hat{n}}^{n,k,*}-\left(\mathbf{W}^{n,k}\right)^{T}\tilde{\mathbf{F}}_{\hat{n}}+\tilde{F}_{\hat{n}}^{n,k,s}+\frac{\theta^{-}}{2}\left(\vv{B}^{\text{bndy}}-\vv{B}^{-}\right)\cdot\hat{n},1\right\rangle_{\!M}\dS  \\
& -\sum_{n=1}^{K_{T}}\underset{\text{faces}}{\sum_{\text{Interior}}}\frac{\Delta t^{n}}{2}\int\limits_{\partial E^{3},N}\left\langle\jump{\mathbf{W}^{n,k}}^{T}\tilde{\mathbf{F}}_{\hat{n}}^{n,k,*}-\jump{\left(\mathbf{W}^{n,k}\right)^{T}\tilde{\mathbf{F}}_{\hat{n}}^{n,k}}+\jump{\tilde{F}_{\hat{n}}^{n,k,s}}+\avg{\theta}\jump{\vv{B}}\cdot\hat{n},1\right\rangle_{\!M}\dS
\\
= & \quad\sum_{n=1}^{K_{T}}\underset{\text{faces}}{\sum_{\text{Boundary}}}\frac{\Delta t^{n}}{2}\int\limits_{\partial E^{3},N}\left\langle \left(\mathbf{W}^{n,k}\right)^{T}\tilde{\mathbf{F}}_{\hat{n}}^{n,k,*}-\left(\mathbf{W}^{n,k}\right)^{T}\tilde{\mathbf{F}}_{\hat{n}}+\tilde{F}_{\hat{n}}^{n,k,s}+\frac{\theta^{-}}{2}\left(\vv{B}^{\text{bndy}}-\vv{B}^{-}\right)\cdot\hat{n},1\right\rangle_{\!M}\dS, 
\end{aligned}
\end{equation}
where $\vv{B}^{\text{bndy}}$ is the magnetic field evaluated from an appropriate boundary state. So, we obtain a discrete entropy equality of a similar form as \eqref{SpatialStability2} up to the prescription of proper boundary conditions for the ideal MHD equations.

We note that the entropy preserving MHD fluxes are not enough to ensure a stable discretization for discontinuous solutions. A dissipation operator, like the one found in \cite{bohm2018}, must be added to the entropy preserving flux functions. Then, the equations \eqref{SurfaceEntropyConditionMHD} and \eqref{SpatialStabilityMHD} become inequalities. However, even with these inequalities a discrete entropy inequality of a similar form as \eqref{eq:finalResultEC} can be proven.    

\section{Proofs for the temporal entropy analysis}\label{sec:TimeProofs}
In this section, we apply the following identities which result from the properties of the SBP operator $\matx{Q}$  
\begin{align}
\sum_{i,j=0}^{K}\matx{Q}_{ij}\jump{a}_{\left(i,j\right)}=&
 \qquad   \sum_{i,j=0}^{n}\matx{B}_{ij}a_{j}=-\left.a\right|_{-1}^{\,1}, \label{SBP1}\\
\sum_{i,j=0}^{K}\matx{Q}_{ij}\jump{a}_{\left(i,j\right)}\avg{b}_{\left(i,j\right)}=& \qquad
\sum_{i,j=0}^{K}\matx{Q}_{ij}a_{i}b_{j}-\left.ab\right|_{-1}^{\,1}=-\sum_{i,j=0}^{K}\matx{Q}_{ij}a_{j}b_{i}, \label{SBP2} \\
\sum_{i,j=0}^{K}\matx{Q}_{ij}\jump{a}_{\left(i,j\right)}\avg{b}_{\left(i,j\right)}\avg{c}_{\left(i,j\right)} 
=&-\frac{1}{2}\sum_{i,j=0}^{K}\matx{Q}_{ij}a_{j}b_{i}c_{j}+\frac{1}{2}\sum_{i,j=0}^{n}\matx{Q}_{ij}a_{i}b_{i}c_{j}-\frac{1}{2}\sum_{i,j=0}^{n}\matx{Q}_{ij}a_{j}b_{i}c_{i}, \label{SBP3}
\end{align}
where $K=M;N$ and $\left\{ a\right\} _{i=0}^{K}$, $\left\{ b\right\} _{i=0}^{K}$ and $\left\{ c\right\} _{i=0}^{K}$ are  generic nodal values. These identities can be proven in a similar way as the discrete split forms in Lemma 1 in \cite{Gassner:2016ye}. Thus, we skip a proof in this paper.

\subsection{Proof of the inequality \eqref{TemporalStability2}}\label{sec:App C}
The coefficients of the SBP operator $\matx{Q}$ are given by $\matx{Q}_{\sigma\theta}=\omega_{\sigma}\matx{D}_{\sigma\theta}$ and the SBP property \eqref{SBP} supplies $2\matx{Q}_{\sigma\theta}=\matx{Q}_{\sigma\theta}-\matx{Q}_{\theta\sigma}+\matx{B}_{\sigma\theta}$. Thus, it follows  
\begin{align}\label{VolumeContributionTemporal1}
\begin{split}
\left\langle J\vec{\mathbb{D}}^{M}\mathbf{U}^{\text{EC}},\mathbf{W}\right\rangle_{\!N\times M}
=&\quad
\sum_{i,j,k=0}^{N}\omega_{ijk}J_{ijk}\sum_{\sigma,\theta=0}^{M}2\matx{Q}_{\sigma\theta}\mathbf{W}_{\sigma ijk}^{T}\mathbf{U}^{\#}\left(\mathbf{U}_{\sigma ijk},\mathbf{U}_{\theta ijk}\right) \\
=&\quad
\sum_{i,j,k=0}^{N}\omega_{ijk}J_{ijk}\sum_{\sigma,\theta=0}^{M}\matx{Q}_{\sigma\theta}\mathbf{W}_{\sigma ijk}^{T}\mathbf{U}^{\#}\left(\mathbf{U}_{\sigma ijk},\mathbf{U}_{\theta ijk}\right) \\
& -\sum_{i,j,k=0}^{N}\omega_{i,j,k}J_{ijk}\sum_{\sigma,\theta=0}^{M}\matx{Q}_{\theta\sigma}\mathbf{W}_{\sigma ijk}^{T}\mathbf{U}^{\#}\left(\mathbf{U}_{\sigma ijk},\mathbf{U}_{\theta ijk}\right) \\
& +\sum_{i,j,k=0}^{N}\omega_{ijk}J_{ijk}\sum_{\sigma,\theta=0}^{M}\matx{B}_{\sigma\theta}\mathbf{W}_{\sigma ijk}^{T}\mathbf{U}^{\#}\left(\mathbf{U}_{\sigma ijk},\mathbf{U}_{\theta ijk}\right).
\end{split}
\end{align}
The two-point state $\mathbf{U}^{\#}$ is symmetric, hence we obtain by the substitution $\sigma\longleftrightarrow \theta$ in the penultimate sum in equation \eqref{VolumeContributionTemporal1} 
\begin{align}\label{VolumeContributionTemporal2}
\begin{split}
\left\langle J\vec{\mathbb{D}}^{M}\mathbf{U}^{\text{EC}},\mathbf{W}\right\rangle_{\!N\times M}
=&\quad
\sum_{ijk=0}^{N}\omega_{ijk}J_{ijk}\sum_{\sigma,\theta=0}^{M}2\omega_{\sigma}\matx{D}_{\sigma\theta}\mathbf{W}_{\sigma ijk}^{T}\mathbf{U}^{\#}\left(\mathbf{U}_{\sigma ijk},\mathbf{U}_{\theta ijk}\right) \\
=&\quad
\sum_{ijk=0}^{N}\omega_{ijk}J_{ijk}\sum_{\sigma,\theta=0}^{M}\matx{Q}_{\sigma\theta}\jump{\mathbf{W}}_{\left(\sigma,\theta\right)ijk}^{T}\mathbf{U}^{\#}\left(\mathbf{U}_{\sigma ijk},\mathbf{U}_{\theta ijk}\right) \\
&+\sum_{ijk=0}^{N}\omega_{ijk}J_{ijk}\sum_{\sigma,\theta=0}^{M}\matx{B}_{\sigma\theta}\mathbf{W}_{\sigma ijk}^{T}\mathbf{U}^{\#}\left(\mathbf{U}_{\sigma ijk},\mathbf{U}_{\theta ijk}\right).
\end{split}
\end{align}
Next, since $\mathbf{U}^{\#}$ is consistent, it follows  
\begin{equation}\label{VolumeContributionTemporal3}
\sum_{ijk=0}^{N}\omega_{ijk}J_{ijk}\sum_{\sigma,\theta=0}^{M}\matx{B}_{\sigma\theta}\mathbf{W}_{\sigma ijk}^{T}\mathbf{U}^{\#}\left(\mathbf{U}_{\sigma ijk},\mathbf{U}_{\theta ijk}\right)=\left.\left\langle \mathbf{W}^{T}\mathbf{U},J\right\rangle_{\!N}\right|_{-1}^{\,1}.
\end{equation}
Furthermore, since $\mathbf{U}^{\#}$ satisfies the property \eqref{DiscreteEntropyConservation}, we obtain by the identity \eqref{SBP1}  
\begin{align}\label{VolumeContributionTemporal4}
\begin{split}
& \quad \sum_{ijk=0}^{N}\omega_{ijk}J_{ijk}\sum_{\sigma,\theta=0}^{M}\matx{Q}_{\sigma\theta}\jump{\mathbf{W}}_{\left(\sigma,\theta\right)ijk}^{T}\mathbf{U}^{\#}\left(\mathbf{U}_{\sigma ijk},\mathbf{U}_{\theta ijk}\right)  \\
=& \quad  \sum_{i,j,k=0}^{N}\omega_{ijk}J_{ijk}\sum_{\sigma,\theta=0}^{M}\matx{Q}_{\sigma\theta}\jump{\varPhi\left(\mathbf{U}\right)}_{\left(\sigma,\theta\right)ijk} \\
=&-\sum_{i,j,k=0}^{N}\omega_{ijk}J_{ijk}\left.\varPhi\left(\mathbf{U}\right)\right|_{-1}^{\, 1}
=-\left.\left\langle \varPhi\left(\mathbf{U}\right),J\right\rangle_{\!N}\right|_{-1}^{\,1}.
\end{split}
\end{align}
Next, by plugging the equations \eqref{VolumeContributionTemporal3} and \eqref{VolumeContributionTemporal4} in \eqref{VolumeContributionTemporal2}, we obtain the identity  
\begin{equation}\label{VolumeContributionTemporal11}
\left\langle J\vec{\mathbb{D}}^{M}\mathbf{U}^{\text{EC}},\mathbf{W}\right\rangle_{\!N\times M} 
= \left.\left\langle \mathbf{W}^{T}\mathbf{U},J\right\rangle_{\!N}\right|_{-1}^{\,1}-\left.\left\langle \varPhi\left(\mathbf{U}\right),J\right\rangle_{\!N}\right|_{-1}^{\,1}=\left.\left\langle s\left(\mathbf{U}\right),J\right\rangle_{\!N}\right|_{-1}^{\,1}.
\end{equation}
Finally, a summation of the temporal part \eqref{TemporalPart} over all space-time elements and the identity \eqref{VolumeContributionTemporal11} provide 
\begin{align}\label{SummationTemporal}
\begin{split}
\sum_{n=1}^{K_{T}}\sum_{k=1}^{K_{S}}A_{T}\!\left(\mathbf{U}^{n,k},\mathbf{W}^{n,k}\right)
=&  \quad \bar{S}\left(\mathbf{U}\left(T\right)\right)  
+  \sum_{k=1}^{K_{S}}\left.\left\langle \left(\mathbf{W}^{K_{T},k}\right)^{T}\left(\mathbf{U}^{K_{T},k,*}-\mathbf{U}^{K_{T},k}\right),J^{k}\right\rangle_{\!N}\right|_{\tau=1} \\
&  \quad \qquad \qquad -\sum_{n=2}^{K_{T}}\sum_{k=1}^{K_{S}}\left.\left\langle \jump{\mathbf{W}^{n,k}}^{T}\mathbf{U}^{n,k,*}-\jump{\varPhi\left(\mathbf{U}^{n,k}\right)},J^{k}\right\rangle_{\!N}\right|_{\tau=-1} \\
& -\bar{S}\left(\mathbf{U}\left(0\right)\right)-\sum_{k=1}^{K_{S}}\left.\left\langle \left(\mathbf{W}^{1,k}\right)^{T}\left(\mathbf{U}^{1,k,*}-\mathbf{U}^{1,k}\right),J^{k}\right\rangle_{\!N}\right|_{\tau=-1}, 
\end{split}
\end{align}
where the quantity $\bar{S}\left(\mathbf{U}\left(T\right)\right)$ is defined in \eqref{DisreteEntropyIntegral2} and the quantity $\bar{S}\left(\mathbf{U}\left(0\right)\right)$ is defined in \eqref{DisreteEntropyIntegral3}.

This completes the proof for the identity \eqref{TemporalStability2}.

\section{Proofs for the temporal kinetic energy analysis}\label{sec:TimeProofsKEP}\label{sec:App KEP}

\subsection{Proof of Theorem \ref{Therorem:KEP} (Kinetic energy preservation)}\label{sec:TimeProofsKEP1}
We choose $\boldsymbol{\varphi}=\mathbf{V}$ as test function in the equation \eqref{SpaceTimeDG1} and sum over all space-time elements to obtain
\begin{equation}\label{TemporalKineticStability1}
\sum_{n=1}^{K_{T}}\sum_{k=1}^{K_{S}}\left(A_{T}\!\left(\mathbf{U}^{n,k},\mathbf{V}^{n,k}\right)+A_{S}\!\left(\blockvec{\mathbf{F}}{}^{n,k},\mathbf{V}^{n,k}\right)\right)=0. 
\end{equation}

First, we investigate the temporal part of the space-time DGSEM. We proceed as in the proof of the identity \eqref{TemporalStability2} and obtain by the SBP property \eqref{SBP} 
\begin{align}\label{VolumeContTemporalKinetic1}
\begin{split}
\left\langle J\vec{\mathbb{D}}^{M}\mathbf{U}^{\text{KEP}},\mathbf{V}\right\rangle _{N\times M}
=&\quad
\sum_{ijk=0}^{N}\omega_{ijk}J_{ijk}\sum_{\sigma,\theta=0}^{M}2  \matx{Q}_{\sigma\theta}\mathbf{V}_{\sigma ijk}^{T}\mathbf{U}^{\text{KEP}}\left(\mathbf{U}_{\sigma ijk},\mathbf{U}_{\theta ijk}\right) \\
=&\quad
\sum_{ijk=0}^{N}\omega_{ijk}J_{ijk}\sum_{\sigma,\theta=0}^{M}\matx{Q}_{\sigma\theta}\jump{\mathbf{V}}_{\left(\sigma,\theta\right)ijk}^{T}\mathbf{U}^{\text{KEP}}\left(\mathbf{U}_{\sigma ijk},\mathbf{U}_{\theta ijk}\right) \\
&+\sum_{ijk=0}^{N}\omega_{ijk}J_{ijk}\sum_{\sigma,\theta=0}^{M}\matx{B}_{\sigma\theta}\mathbf{V}_{\sigma ijk}^{T}\mathbf{U}^{\text{KEP}}\left(\mathbf{U}_{\sigma ijk},\mathbf{U}_{\theta ijk}\right).
\end{split}
\end{align}
The definition of the quantity $\mathbf{V}$ provides  
\begin{align}\label{VolumeContTemporalKinetic2}
\begin{split}
& \quad \sum_{ijk=0}^{N}\omega_{ijk}J_{ijk}\sum_{\sigma,\theta=0}^{M}\matx{Q}_{\sigma\theta}\jump{\vec{V}}_{\left(\sigma,\theta\right)ijk}^{T}\mathbf{U}^{\text{KEP}}\left(\mathbf{U}_{\sigma ijk},\mathbf{U}_{\theta ijk}\right)  \\
=& -\sum_{ijk=0}^{N}\omega_{ijk}J_{ijk}\sum_{\sigma,\theta=0}^{M}\matx{Q}_{\sigma\theta}\frac{1}{2}\jump{\left|\vv{v}\right|^{2}}_{\left(\sigma,\theta\right)ijk}\mathbf{U}_{1}^{\text{KEP}}\left(\mathbf{U}_{\sigma ijk},\mathbf{U}_{\theta ijk}\right) \\
& +\sum_{ijk=0}^{N}\omega_{ijk}J_{ijk}\sum_{\sigma,\theta=0}^{M}\matx{Q}_{\sigma\theta}\jump{v_{1}}_{\left(\sigma,\theta\right)ijk}\mathbf{U}_{2}^{\text{KEP}}\left(\mathbf{U}_{\sigma ijk},\mathbf{U}_{\theta ijk}\right) \\
& +\sum_{ijk=0}^{N}\omega_{ijk}J_{ijk}\sum_{\sigma,\theta=0}^{M}\matx{Q}_{\sigma\theta}\jump{v_{2}}_{\left(\sigma,\theta\right)ijk}\mathbf{U}_{3}^{\text{KEP}}\left(\mathbf{U}_{\sigma ijk},\mathbf{U}_{\theta ijk}\right) \\
& +\sum_{ijk=0}^{N}\omega_{ijk}J_{ijk}\sum_{\sigma,\theta=0}^{M}\matx{Q}_{\sigma\theta}\jump{v_{3}}_{\left(\sigma,\theta\right)ijk}\mathbf{U}_{4}^{\text{KEP}}\left(\mathbf{U}_{\sigma ijk},\mathbf{U}_{\theta ijk}\right).
\end{split}
\end{align}
Next, it follows by the properties \eqref{DiscreteKineticFlux} of the state $\mathbf{U}^{\text{KEP}}$ and the identity \eqref{eq:jumpProperties} 
\begin{equation}\label{VolumeContTemporalKinetic3}
\sum_{ijk=0}^{N}\omega_{ijk}J_{ijk}\sum_{\sigma,\theta=0}^{M}\matx{Q}_{\sigma\theta}\jump{\mathbf{V}}_{\left(\sigma,\theta\right)ijk}^{T}\mathbf{U}^{\text{KEP}}\left(\mathbf{U}_{\sigma ijk},\mathbf{U}_{\theta ijk}\right)=0. 
\end{equation}
Furthermore, since $\mathbf{U}^{\text{KEP}}$ is consistent, we obtain
\begin{equation}\label{VolumeContTemporalKinetic4}
\sum_{ijk=0}^{N}\omega_{ijk}J_{ijk}\sum_{\sigma,\theta=0}^{M}\matx{B}_{\sigma\theta}\mathbf{V}_{\sigma ijk}^{T}\mathbf{U}^{\#}\left(\mathbf{U}_{\sigma ijk},\mathbf{U}_{\theta ijk}\right)=\left.\left\langle J\mathbf{U},\mathbf{V}\right\rangle _{N}\right|_{-1}^{\ \,1}.
\end{equation}
Next, we obtain by \eqref{VolumeContTemporalKinetic1}, \eqref{VolumeContTemporalKinetic3} and  \eqref{VolumeContTemporalKinetic4} 
\begin{equation}\label{VolumeContTemporalKinetic5}
A_{T}\!\left(\mathbf{U},\mathbf{V}\right)=\left.\left\langle J\mathbf{U}^{*},\mathbf{V}\right\rangle _{N}\right|_{-1}^{\,1}.  
\end{equation}
Since, the space-time DGSEM is applied with Dirichlet boundary conditions in time, a summation of the temporal part \eqref{TemporalPart} over all space-time elements and the \eqref{VolumeContTemporalKinetic5} provide  
\begin{align}\label{SummationTemporalKinetic1}
\begin{split}
\sum_{k=1}^{K_{T}}\sum_{k=1}^{K_{S}}A_{T}\left(\mathbf{U}^{n,k},\mathbf{V}^{n,k}\right) 
=&\quad \bar{K}\left(\mathbf{U}\left(T\right)\right)
-\sum_{k=2}^{K_{T}}\sum_{k=1}^{K_{S}}\left.\left\langle \jump{\mathbf{V}^{n,k}}^{T}\mathbf{U}^{n,k,*},J^{k}\right\rangle _{N}\right|_{\tau=-1} \\
& \qquad \qquad \qquad   -\sum_{k=1}^{K_{S}}\left.\left\langle \left(\mathbf{V}_{+}^{1,k}\right)^{T}\mathbf{U}^{1,k,*},J^{k}\right\rangle _{N}\right|_{\tau=-1}, 
\end{split}
\end{align}
where $\bar{K}\left(\mathbf{U}\left(T\right)\right)$ is given by \eqref{DicreteKineticIntegrals1}. The temporal numerical state functions $\mathbf{U}^{*}$ have the property \eqref{DiscreteKineticFlux} at the interior temporal boundary points. Thus, we obtain     
\begin{equation}\label{SummationTemporalKinetic2}
\sum_{k=2}^{K_{T}}\sum_{k=1}^{K_{S}}\left.\left\langle \jump{\mathbf{V}^{n,k}}^{T}\mathbf{U}^{n,k,*},J^{k}\right\rangle _{N}\right|_{\tau=-1}=0.  
\end{equation}
Furthermore, at the exterior temporal boundary points the temporal numerical state functions $\mathbf{U}^{*}$ satisfy the property  \eqref{EntropyPreservation1}. This provides the identity    
\begin{equation}\label{SummationTemporalKinetic3}
\left.\mathbf{U}^{1,k,*}\right|_{\tau=-1}=\left.\mathbf{U}_{-}^{1,k}\right|_{\tau=-1}=\left.\mathbf{u}^{1,k}\right|_{\tau=-1}, \qquad k=1,\dots,K_{S},
\end{equation}
where $\left.\mathbf{u}^{1,k}\right|_{\tau=-1}=\mathbf{u}^{k}\left(0\right)$ is the initial condition prescribed to the conservation law in the spatial cell $e_{k}$, $k=1,\dots,K_{S}$. Thus, we obtain 
\begin{equation}\label{SummationTemporalKinetic4}
-\sum_{k=1}^{K_{S}}\left.\left\langle
 \left(\mathbf{V}_{+}^{1,k}\right)^{T}\mathbf{U}^{1,k,*},J^{k}\right\rangle _{N}\right|_{\tau=-1}	
=	-\bar{K}\left(\mathbf{u}\left(0\right)\right)
 -\sum_{k=1}^{K_{S}}\left.\left\langle \jump{\mathbf{V}^{1,k}}^{T}\mathbf{u}^{1,k},J^{k}\right\rangle _{N}\right|_{\tau=-1}, 
\end{equation}
where $\bar{K}\left(\mathbf{u}\left(0\right)\right)$ is given by \eqref{DicreteKineticIntegrals2}. Therefore, the equation \eqref{SummationTemporalKinetic1} becomes
\begin{align}\label{SummationTemporalKinetic5}
\begin{split}
\sum_{k=1}^{K_{T}}\sum_{k=1}^{K_{S}}A_{T}\left(\mathbf{U}^{n,k},\mathbf{V}^{n,k}\right)  
=&   \quad  \bar{K}\left(\mathbf{U}\left(T\right)\right) 
-\bar{K}\left(\mathbf{u}\left(0\right)\right) \\
&   -\sum_{k=1}^{K_{S}}\left.\left\langle \jump{\mathbf{V}^{1,k}}^{T}\mathbf{u}^{1,k},J^{k}\right\rangle _{N}\right|_{\tau=-1}.
\end{split}
\end{align}  

Next, we consider the spatial part of the space-time DGSEM. In Appendix \ref{sec:TimeProofsKEP2}, we prove the identity 
\begin{equation}\label{KineticVolumeContribution}
\left\langle \Dprojection{N}\cdot\blockvec{\tilde{\mathbf{F}}}{}^{\text{KEP}},\mathbf{V}\right\rangle _{N\times M}=-\left\langle \vv{\nabla}_{\xi}\cdot\interpolation{N}{\left(\vv{\tilde{v}}\right)},p\right\rangle _{N\times M}+\int\limits _{\partial E^{3},N}\left\langle \tilde{F}_{\hat{n}}^{\kappa}+p\tilde{v}_{\hat{n}},1\right\rangle _{M}\dS, 
\end{equation}
where $\tilde{F}^{\kappa}_{\hat{n}}:=\vv{\tilde{F}}^{\kappa}\cdot\hat{n}$ and $\vv{\tilde{F}}^{\kappa}$ is the polynomial approximation of the kinetic energy flux $\vv{\tilde{f}}^{\kappa}$. 
It follows by \eqref{BlockvectorNormal}
\begin{align}\label{KineticBlockvectorNormal}
\begin{split}
\mathbf{V}^T\tilde{\mathbf{F}}_{\hat{n}} 
=& \quad\; \hat{s}n_{1}\left(-\frac{1}{2}\left|\vv{v}\right|^{2}
+\rho v_{1}^{3}+ \rho p v_{1}  +\rho v_{1} v_{2}^{2}+\rho v_{1} v_{3}^{2} \right) 
\\
&+\hat{s}n_{2}\left(-\frac{1}{2}\left|\vv{v}\right|^{2}
+\rho v_{1}^{2}v_{2} +\rho  v_{2}^{3}+ \rho P v_{2}+\rho v_{2} v_{3}^{2} \right) 
\\
&+\hat{s}n_{3}\left(-\frac{1}{2}\left|\vv{v}\right|^{2}
+\rho v_{1}^{2}v_{3} +\rho v_{2}^{2}v_{3}+\rho v_{3}^{3}+ \rho p v_{3}  \right) \\
=& \frac{1}{2}\left|\vv{v}\right|^{2}\left(\hat{s}\vv{n}\cdot\vv{v}\right)+p\left(\hat{s}\vv{n}\cdot\vv{v}\right)=\tilde{F}_{\hat{n}}^{\kappa}+p\tilde{v}_{\hat{n}}. 
\end{split}
\end{align}
Thus, we obtain by \eqref{KineticVolumeContribution} and \eqref{KineticBlockvectorNormal}  
\begin{align}\label{SpatialStabilityKinetic1}
\begin{split}
A_{S}\!\left(\blockvec{\mathbf{F}},\mathbf{V}\right) 
=& -\frac{\Delta t}{2}\left\langle \vv{\nabla}_{\xi}\cdot\interpolation{N}{\left(\vv{\tilde{v}}\right)},p\right\rangle _{N\times M}  \\
&+\frac{\Delta t}{2}\int\limits _{\partial E^{3},N}\left\langle \tilde{F}_{\hat{n}}^{\kappa}+p\tilde{v}_{\hat{n}},1\right\rangle _{M}\dS+\frac{\Delta t}{2}\int\limits _{\partial E^{3},N}\left\langle \mathbf{V}^{T}\tilde{\mathbf{F}}_{\hat{n}}^{*}-\tilde{F}_{\hat{n}}^{\kappa}-p\tilde{v}_{\hat{n}},1\right\rangle _{M} \\
=&-\frac{\Delta t}{2}\left\langle \vv{\nabla}_{\xi}\cdot\interpolation{N}{\left(\vv{\tilde{v}}\right)},p\right\rangle _{N\times M} 
+\frac{\Delta t}{2}\int\limits _{\partial E^{3},N}\left\langle \mathbf{V}^{T}\tilde{\mathbf{F}}_{\hat{n}}^{*},1\right\rangle _{M}.
\end{split}
\end{align}
Additionally, the conditions \eqref{Jameson2} and the properties  \eqref{eq:jumpProperties} of the jump operator provide
\begin{align}\label{KineticSurfaceFluxNormal}
\begin{split}
\jump{\mathbf{V}}^{T}\tilde{\mathbf{F}}_{\hat{n}}^{*} 
=& \quad\;  \hat{s}n_{1}\left(-\frac{1}{2}\jump{\left|\vv{v}\right|^{2}}\mathbf{F}_{1}^{1,\text{KEP}}+\jump{v_{1}}\mathbf{F}_{1}^{2,\text{KEP}}+\jump{v_{2}}\mathbf{F}_{1}^{3,\text{KEP}}+\jump{v_{3}}\mathbf{F}_{1}^{4,\text{KEP}}\right) \\
& +\hat{s}n_{2}\left(-\frac{1}{2}\jump{\left|\vv{v}\right|^{2}}\mathbf{F}_{2}^{1,\text{KEP}}+\jump{v_{1}}\mathbf{F}_{2}^{2,\text{KEP}}+\jump{v_{2}}\mathbf{F}_{2}^{3,\text{KEP}}+\jump{v_{3}}\mathbf{F}_{2}^{4,\text{KEP}}\right) \\
&+\hat{s}n_{3}\left(-\frac{1}{2}\jump{\left|\vv{v}\right|^{2}}\mathbf{F}_{3}^{1,\text{KEP}}+\jump{v_{1}}\mathbf{F}_{3}^{2,\text{KEP}}+\jump{v_{2}}\mathbf{F}_{3}^{3,\text{KEP}}+\jump{v_{1}}\mathbf{F}_{3}^{4,\text{KEP}}\right) \\ 
=& \quad\; \hat{s}n_{1}\mathbf{F}_{1}^{1,\text{KEP}}\left(-\frac{1}{2}\jump{\left|\vv{v}\right|^{2}}+\jump{v_{1}}\avg{v_{1}}+\jump{v_{1}}\avg{p}+\jump{v_{2}}\avg{v_{2}}+\jump{v_{3}}\avg{v_{3}}\right) \\
&+\hat{s}n_{2}\mathbf{F}_{1}^{1,\text{KEP}}\left(-\frac{1}{2}\jump{\left|\vv{v}\right|^{2}}+\jump{v_{1}}\avg{v_{1}}+\jump{v_{2}}\avg{v_{2}}+\jump{v_{2}}\avg{p}+\jump{v_{3}}\avg{v_{3}}\right) \\
&+\hat{s}n_{3}\mathbf{F}_{3}^{1,\text{KEP}}\left(-\frac{1}{2}\jump{\left|\vv{v}\right|^{2}}+\jump{v_{1}}\avg{v_{1}}+\jump{v_{2}}\avg{v_{2}}+\jump{v_{3}}\avg{v_{3}}+\jump{v_{3}}\avg{p}\right)\\
=& \avg{p}\left(\hat{s}\vv{n}\cdot\jump{\vv{v}}\right)=\avg{p}\jump{\tilde{v}_{\hat{n}}}.
\end{split}
\end{align}
Next, we sum the equation \eqref{SpatialStabilityKinetic1} over all space-time elements and apply the identity \eqref{KineticSurfaceFluxNormal}. This results in   
\begin{align}\label{SpatialStabilityKinetic2}
\begin{split}
 \quad\sum_{n=1}^{K_{T}}\sum_{k=1}^{K_{S}}A_{S}\!\left(\blockvec{\mathbf{F}}{}^{n,k},\mathbf{V}^{n,k}\right) 
= & -\sum_{n=1}^{K_{T}}\sum_{k=1}^{K_{S}}\frac{\Delta t^{n}}{2}\left\langle \vv{\nabla}_{\xi}\cdot\interpolation{N}{\left(\vv{\tilde{v}}^{n,k}\right)},p^{n,k}\right\rangle _{N\times M} \\
&+\sum_{n=1}^{K_{T}}\underset{\text{faces}}{\sum_{\text{Boundary}}}\frac{\Delta t^{n}}{2}\int\limits _{\partial E^{3},N}\left\langle \left(\mathbf{V}^{n,k}\right)^{T}\tilde{\mathbf{F}}_{\hat{n}}^{n,k,*},1\right\rangle _{M}\dS \\
&-\sum_{n=1}^{K_{T}}\underset{\text{faces}}{\sum_{\text{Interior}}}\frac{\Delta t^{n}}{2}\int\limits _{\partial E^{3},N}\left\langle \jump{\mathbf{V}^{n,k}}^{T}\tilde{\mathbf{F}}_{\hat{n}}^{n,k,*},1\right\rangle _{M}\dS \\
=&-\sum_{n=1}^{K_{T}}\sum_{k=1}^{K_{S}}\frac{\Delta t^{n}}{2}\left\langle \vv{\nabla}_{\xi}\cdot\interpolation{N}{\left(\vv{\tilde{v}}^{n,k}\right)},p^{n,k}\right\rangle _{N\times M} \\ 
&-\sum_{n=1}^{K_{T}}\underset{\text{faces}}{\sum_{\text{Interior}}}\frac{\Delta t^{n}}{2}\int\limits _{\partial E^{3},N}\left\langle \avg{p^{n,k}}\jump{\tilde{v}_{\hat{n}}^{n,k}},1\right\rangle _{M}\dS+\textbf{BC},
\end{split}
\end{align}
where 
\begin{equation}\label{SpatialStabilityKinetic3}
\textbf{BC}:=\sum_{n=1}^{K_{T}}\underset{\text{faces}}{\sum_{\text{Boundary}}}\frac{\Delta t^{n}}{2}\int\limits _{\partial E^{3},N}\left\langle \left(\mathbf{V}^{n,k}\right)^{T}\tilde{\mathbf{F}}_{\hat{n}}^{n,k,*},1\right\rangle _{M}\dS.
\end{equation}
The term $\textbf{BC}$ in \eqref{SpatialStabilityKinetic3} vanished, since the space-time DGSEM is applied with periodic boundary conditions in space. Finally, we substitute the results \eqref{SummationTemporalKinetic5} and  \eqref{SpatialStabilityKinetic2} in \eqref{TemporalKineticStability1}, use that $\textbf{BC}=0$ and rearrange terms. This results in the final equation  
\begin{align}\label{eq:finalResultKEP}
\begin{split}
\bar{K}\left(\mathbf{U}\left(T\right)\right)
&=
\bar{K}\left(\mathbf{U}\left(0\right)\right)
+\sum_{n=1}^{K_{T}}\sum_{k=1}^{K_{S}}\frac{\Delta t^{n}}{2}\left\langle \vv{\nabla}_{\xi}\cdot\interpolation{N}{\left(\vv{\tilde{v}}^{n,k}\right)},p^{n,k}\right\rangle _{N\times M} \\ 
& \quad \qquad \qquad \ \ +\sum_{n=1}^{K_{T}}\underset{\text{faces}}{\sum_{\text{Interior}}}\frac{\Delta t^{n}}{2}\int\limits _{\partial E^{3},N}\left\langle \avg{p^{n,k}}\jump{\tilde{v}_{\hat{n}}^{n,k}},1\right\rangle _{M}\dS \\
& \quad \qquad \qquad \ \ 
 +\sum_{k=1}^{K_{S}}\left.\left\langle \jump{\mathbf{V}^{1,k}}^{T}\mathbf{u}^{1,k},J^{k}\right\rangle _{N}\right|_{\tau=-1}. 
\end{split}
\end{align}

\subsection{Proof of the identity \eqref{KineticVolumeContribution}}\label{sec:TimeProofsKEP2}
The definition of the spatial derivative projection operator \eqref{SpatialDerivativeProjectionOperator} supplies   
\begin{align}\label{KEPVolumeContribution1}
\begin{split}
& \left\langle \Dprojection{N}\cdot\blockvec{\tilde{\mathbf{F}}}{}^{\text{KEP}},\mathbf{V}\right\rangle _{N\times M}  \\
=& \sum_{\sigma=0}^{M}\omega_{\sigma}
\sum_{i,j,k,m=0}^{N} \bigg\{\omega_{jk}2\matx{Q}_{im}\mathbf{V}_{\sigma ijk}^{T}\left(\blockvec{\mathbf{F}}{}^{\text{KEP}}\left(\mathbf{U}_{\sigma ijk},\mathbf{U}_{\sigma mjk}\right)\cdot\avg{J\vv{a}^{1}}_{\left(i,m\right)jk}\right) \\ 
& \qquad \qquad \qquad \quad   +\,\omega_{ik}2\matx{Q}_{jm}\mathbf{V}_{\sigma ijk}^{T}\left(\blockvec{\mathbf{F}}{}^{\text{KEP}}\left(\mathbf{U}_{\sigma ijk},\mathbf{U}_{\sigma imk}\right)\cdot\avg{J\vv{a}^{2}}_{i\left(j,m\right)k}\right)  \\ 
& \qquad \qquad \qquad \quad +\,\omega_{ij}2\matx{Q}_{km}\mathbf{V}_{\sigma ijk}^{T}\left(\blockvec{\mathbf{F}}{}^{\text{KEP}}\left(\mathbf{U}_{\sigma ijk},\mathbf{U}_{\sigma ijm}\right)\cdot\avg{J\vv{a}^{3}}_{ij\left(k,m\right)}\right)\bigg\}. 
\end{split}
\end{align}
The numerical flux functions satisfy the symmetry condition \eqref{FluxSymmetric}. Thus, by the same arguments as in the appendix \eqref{sec:App C} and the SBP property, the first sum on the the right hand side in \eqref{KEPVolumeContribution1} can be written as    
\begin{align}\label{KEPVolumeContribution2}
\begin{split}
& \quad \sum_{\sigma=0}^{M}\omega_{\sigma}\sum_{i,j,k,m=0}^{N}
\omega_{jk}2\matx{Q}_{im}\mathbf{V}_{\sigma ijk}^{T}\left(\blockvec{\mathbf{F}}{}^{\text{KEP}}\left(\mathbf{U}_{\sigma ijk},\mathbf{U}_{\sigma mjk}\right)\cdot\avg{J\vv{a}^{1}}_{\left(i,m\right)jk}\right) \\ 
=& \quad \sum_{\sigma=0}^{M}\omega_{\sigma}\sum_{i,j,k,m=0}^{N}\omega_{jk}\matx{Q}_{im}\jump{\mathbf{V}}_{\sigma ijk}^{T}\left(\blockvec{\mathbf{F}}{}^{\text{KEP}}\left(\mathbf{U}_{\sigma ijk},\mathbf{U}_{\sigma mjk}\right)\cdot\avg{J\vv{a}^{1}}_{\left(i,m\right)jk}\right) \\
&+\sum_{\sigma=0}^{M}\omega_{\sigma}\sum_{i,j,k,m=0}^{N}\omega_{jk}\matx{B}_{im}\mathbf{V}_{\sigma ijk}^{T}\left(\blockvec{\mathbf{F}}{}^{\text{KEP}}\left(\mathbf{U}_{\sigma ijk},\mathbf{U}_{\sigma mjk}\right)\cdot\avg{J\vv{a}^{1}}_{\left(i,m\right)jk}\right). 
\end{split}
\end{align}
For the last sum on the right hand side in \eqref{KEPVolumeContribution2}, it follows 
\begin{align}\label{KEPVolumeContribution4}
\begin{split}
& \quad \sum_{\sigma=0}^{M}\omega_{\sigma}\sum_{i,j,k,m=0}^{N}\omega_{jk}\matx{B}_{im}\mathbf{V}_{\sigma ijk}^{T}\left(\blockvec{\mathbf{F}}{}^{\text{KEP}}\left(\mathbf{U}_{\sigma ijk},\mathbf{U}_{\sigma mjk}\right)\cdot\avg{J\vv{a}^{1}}_{\left(i,m\right)jk}\right) \\
=& \quad \sum_{\sigma=0}^{M}\omega_{\sigma}\sum_{j,k=0}^{N}\omega_{jk}\left\{ \left(\vv{F}_{\sigma Njk}^{\kappa}+p_{\sigma Njk}\vv{v}_{\sigma Njk}\right)^{T}J\vv{a}_{Njk}^{1}-\left(\vv{F}_{\sigma0jk}^{\kappa}+p_{\sigma0jk}\vv{v}_{\sigma0jk}\right)^{T}J\vv{a}_{0jk}^{1}\right\} ,
\end{split}
\end{align}
since the numerical flux functions are consistent with $\blockvec{\mathbf{F}}$ and the equation 
\begin{equation}\label{KEPVolumeContribution3}
\mathbf{V}^{T}\left(\blockvec{\mathbf{F}}\cdot J\vv{a}^{1}\right)=\left(\vv{F}^{\kappa}+p\vv{v}\right)^{T} J\vv{a}^{1},
\end{equation}
is satisfied at the nodes. Moreover, we obtain by Jameson's conditions \eqref{Jameson} and the property \eqref{eq:jumpProperties} of the jump operator the identity 
\begin{align}\label{KEPVolumeContribution5}
\begin{split}
& \quad \jump{\mathbf{V}}_{\sigma ijk}^{T}\left(\blockvec{\mathbf{F}}{}^{\text{KEP}}\left(\mathbf{U}_{\sigma ijk},\mathbf{U}_{\sigma mjk}\right)\cdot\avg{J\vv{a}^{1}}_{\left(i,m\right)jk}\right) \\
=& \quad \sum_{s=1}^{3}\bigg\{-\frac{1}{2}\jump{\left|\vv{v}\right|^2}_{\sigma\left(i,m\right)jk}\mathbf{F}_{s}^{1,\text{KEP}}\left(\mathbf{U}_{\sigma ijk},\mathbf{U}_{\sigma mjk}\right)\avg{J{a}_{s}^{1}}_{\left(i,m\right)jk} \\ 
& \quad \qquad \quad  + \avg{v_{1}}_{\sigma\left(i,m\right)jk}\jump{v_{1}}_{\sigma\left(i,m\right)jk}\mathbf{F}_{s}^{1,\text{KEP}}\left(\mathbf{U}_{\sigma ijk},\mathbf{U}_{\sigma mjk}\right)\avg{J{a}_{s}^{1}}_{\left(i,m\right)jk} \\
& \quad \quad \qquad  +\avg{v_{2}}_{\sigma\left(i,m\right)jk}\jump{v_{2}}_{\sigma\left(i,m\right)jk}\mathbf{F}_{s}^{1,\text{KEP}}\left(\mathbf{U}_{\sigma ijk},\mathbf{U}_{\sigma mjk}\right)\avg{J{a}_{s}^{1}}_{\left(i,m\right)jk} \\ 
& \quad \quad \qquad  +\avg{v_{3}}_{\sigma\left(i,m\right)jk}\jump{v_{3}}_{\sigma\left(i,m\right)jk}\mathbf{F}_{s}^{1,\text{KEP}}\left(\mathbf{U}_{\sigma ijk},\mathbf{U}_{\sigma mjk}\right)\avg{J{a}_{s}^{1}}_{\left(i,m\right)jk} \\
& \quad \quad \qquad  
+ \jump{v_{s}}_{\sigma\left(i,m\right)jk}\left(p_{s1}^{\star}\right)_{\sigma\left(i,m\right)jk}\avg{J{a}_{s}^{1}}_{\left(i,m\right)jk} \bigg\} \\
=& \quad \jump{\vv{v}}_{\sigma\left(i,m\right)jk}^{T}\left(2\avg{p}_{\sigma\left(i,m\right)jk}\avg{J\vv{a}^{1}}_{\left(i,m\right)jk}-\avg{pJ\vv{a}^{1}}_{\sigma\left(i,m\right)jk}\right)
\end{split}
\end{align}
for all nodal values, since the equation \eqref{Jameson1} provides   
\begin{equation}\label{KEPVolumeContribution6}
\left(p_{s1}^{\star}\right)_{\sigma\left(i,m\right)jk}\avg{J{a}_{s}^{1}}_{\left(i,m\right)jk}=2\avg{p}_{\sigma\left(i,m\right)jk}\avg{J{a}_{s}^{1}}_{\left(i,m\right)jk}-\avg{pJ{a}_{s}^{1}}_{\sigma\left(i,m\right)jk},\quad s=1,2,3.
\end{equation}
By combining the identity \eqref{KEPVolumeContribution5} with \eqref{SBP2} and \eqref{SBP3}, we obtain          
\begin{align}\label{KEPVolumeContribution7}
\begin{split}
& \quad 
\sum_{\sigma=0}^{M}\omega_{\sigma}\sum_{i,j,k,m=0}^{N}\omega_{jk}\matx{Q}_{im}\jump{\mathbf{V}}_{\sigma ijk}^{T}\left(\blockvec{\mathbf{F}}{}^{\text{KEP}}\left(\mathbf{U}_{\sigma ijk},\mathbf{U}_{\sigma mjk}\right)\cdot\avg{J\vv{a}^{1}}_{\left(i,m\right)jk}\right)  \\
=& \quad 
\sum_{\sigma=0}^{M}\omega_{\sigma}\sum_{i,j,k,m=0}^{N}\omega_{jk}\matx{Q}_{im}
\jump{\vv{v}}_{\sigma\left(i,m\right)jk}^{T}\left(2\avg{p}_{\sigma\left(i,m\right)jk}\avg{J\vv{a}^{1}}_{\left(i,m\right)jk}-\avg{pJ\vv{a}^{1}}_{\sigma\left(i,m\right)jk}\right) \\ 
=& \quad \left\langle p\vv{v},\pderivative{\interpolation{N}{\left(J\vv{a}^{1}}\right)}{\xi^{1}}\right\rangle _{N\times M} 
-\left\langle \pderivative{\interpolation{N}{\left(\vv{v}^T J\vv{a}^{1}}\right)}{\xi^{1}},p\right\rangle _{N\times M}. 
\end{split}
\end{align}
Then, we substitute \eqref{KEPVolumeContribution4} and \eqref{KEPVolumeContribution7} in \eqref{KEPVolumeContribution2}. This results in the equation   
\begin{align}\label{KEPVolumeContribution8}
\begin{split}
& \quad \quad \sum_{\sigma=0}^{M}\omega_{\sigma}\sum_{i,j,k,m=0}^{N}
\omega_{jk}2\matx{Q}_{jm}\mathbf{V}_{\sigma ijk}^{T}\left(\blockvec{\mathbf{F}}{}^{\text{KEP}}\left(\mathbf{U}_{\sigma ijk},\mathbf{U}_{\sigma mjk}\right)\cdot\avg{J\vv{a}^{1}}_{\left(i,m\right)jk}\right) \\
=& \quad  \quad \left\langle p\vv{v},\pderivative{\interpolation{N}{\left(J\vv{a}^{1}}\right)}{\xi^{1}}\right\rangle _{N\times M} 
-\left\langle \pderivative{\interpolation{N}{\left(\vv{v}^T J\vv{a}^{1}}\right)}{\xi^{1}},p\right\rangle _{N\times M} \\
& + \quad  \sum_{\sigma=0}^{M}\omega_{\sigma}\sum_{j,k=0}^{N}\omega_{jk}\left\{ \left(\vv{F}_{\sigma Njk}^{\kappa}+p_{\sigma Njk}\vv{v}_{\sigma Njk}\right)^{T}J\vv{a}_{\sigma Njk}^{1}-\left(\vv{F}_{\sigma 0jk}^{\kappa}+p_{\sigma 0jk}\vv{v}_{\sigma 0jk}\right)^{T}J\vv{a}_{0jk}^{1}\right\}. 
\end{split}
\end{align}

In the same way the second and third sum on the right hand side in \eqref{KEPVolumeContribution1} are evaluated. Thus, we also have the identities     
\begin{align}\label{KEPVolumeContribution9}
\begin{split}
& \quad \quad \sum_{\sigma=0}^{M}\omega_{\sigma}\sum_{i,j,k,m=0}^{N}
\omega_{ik}2\matx{Q}_{jm}\mathbf{V}_{\sigma ijk}^{T}\left(\blockvec{\mathbf{F}}{}^{\text{KEP}}\left(\mathbf{U}_{\sigma ijk},\mathbf{U}_{\sigma mjk}\right)\cdot\avg{J\vv{a}^{2}}_{i\left(j,m\right)k}\right) \\
=& \quad  \quad \left\langle p\vv{v},\pderivative{\interpolation{N}{\left(J\vv{a}^{2}}\right)}{\xi^{2}}\right\rangle _{N\times M} 
-\left\langle \pderivative{\interpolation{N}{\left(\vv{v}^T J\vv{a}^{2}}\right)}{\xi^{2}},p\right\rangle _{N\times M} \\
& + \quad  \sum_{\sigma=0}^{M}\omega_{\sigma}\sum_{i,k=0}^{N}\omega_{ik}\left\{ \left(\vv{F}_{\sigma iNk}^{\kappa}+p_{\sigma iNk}\vv{v}_{\sigma iNk}\right)^{T}J\vv{a}_{iNk}^{2}-\left(\vv{F}_{\sigma i0k}^{\kappa}+p_{\sigma i0k}\vv{v}_{\sigma i0k}\right)^{T}J\vv{a}_{i0k}^{2}\right\} 
\end{split}
\end{align}
for the second sum on the right hand side in \eqref{KEPVolumeContribution1} and
\begin{align}\label{KEPVolumeContribution10}
\begin{split}
& \quad \quad \sum_{\sigma=0}^{M}\omega_{\sigma}\sum_{i,j,k,m=0}^{N}
\omega_{ij}2\matx{Q}_{km}\mathbf{V}_{\sigma ijk}^{T}\left(\blockvec{\mathbf{F}}{}^{\text{KEP}}\left(\mathbf{U}_{\sigma ijk},\mathbf{U}_{\sigma mjk}\right)\cdot\avg{J\vv{a}^{3}}_{ij\left(k,m\right)}\right) \\
=& \quad  \quad \left\langle p\vv{v},\pderivative{\interpolation{N}{\left(J\vv{a}^{3}}\right)}{\xi^{3}}\right\rangle _{N\times M} 
-\left\langle \pderivative{\interpolation{N}{\left(\vv{v}^T J\vv{a}^{3}}\right)}{\xi^{3}},p\right\rangle _{N\times M} \\
& + \quad  \sum_{\sigma=0}^{M}\omega_{\sigma}\sum_{i,j=0}^{N}\omega_{ij}\left\{ \left(\vv{F}_{\sigma ijN}^{\kappa}+p_{\sigma ikN}\vv{v}_{\sigma ijN}\right)^{T}J\vv{a}_{ijN}^{3}-\left(\vv{F}_{\sigma ij0}^{\kappa}+p_{\sigma ik0}\vv{v}_{\sigma ij0}\right)^{T}J\vv{a}_{ij0}^{3}\right\} 
\end{split}
\end{align}
for the third sum on the right hand side in \eqref{KEPVolumeContribution1}. 

It follows that  
\begin{align}\label{KEPVolumeContribution11}
\begin{split}
 \sum_{s=1}^{3}\left\langle \pderivative{\interpolation{N}{\left(\vv{v}^T J\vv{a}^{s}}\right)}{\xi^{s}},p\right\rangle _{N\times M}=
\left\langle \vv{\nabla}_{\xi}\cdot\interpolation{N}{\left(\vv{\tilde{v}}\right)},p\right\rangle _{N\times M}
\end{split}
\end{align}
and by the definition of the discrete surface integrals \eqref{DiscreteSpatialSurface}  
\begin{align}\label{KEPVolumeContribution12}
\begin{split}
& \int\limits _{\partial E^{3},N}\left\langle \tilde{F}_{\hat{n}}^{\kappa}+p\tilde{v}_{\hat{n}},1\right\rangle _{M}\dS \\
= &  
 \sum_{\sigma=0}^{M}\omega_{\sigma}\bigg[\sum_{j,k=0}^{N}\omega_{jk}\left\{ \left(\vv{F}_{\sigma Njk}^{\kappa}+p_{\sigma Njk}\vv{v}_{\sigma Njk}\right)^{T}J\vv{a}_{Njk}^{1}-\left(\vv{F}_{\sigma 0jk}^{\kappa}+p_{\sigma 0jk}\vv{v}_{\sigma 0jk}\right)^{T}J\vv{a}_{0jk}^{1}\right\}  \\
&  \quad  \quad \quad  +  \sum_{i,k=0}^{N}\omega_{ik}\left\{ \left(\vv{F}_{\sigma iNk}^{\kappa}+p_{\sigma iNk}\vv{v}_{\sigma iNk}\right)^{T}J\vv{a}_{\sigma iNk}^{2}-\left(\vv{F}_{\sigma i0k}^{\kappa}+p_{\sigma i0k}\vv{v}_{\sigma i0k}\right)^{T}J\vv{a}_{i0k}^{2}\right\}  \\
& \quad  \quad \quad  +  \sum_{i,j=0}^{N}\omega_{ij}\left\{ \left(\vv{F}_{\sigma ijN}^{\kappa}+p_{\sigma ijN}\vv{v}_{\sigma ijN}\right)^{T}J\vv{a}_{ijN}^{3}-\left(\vv{F}_{\sigma ij0}^{\kappa}+p_{\sigma ij0}\vv{v}_{\sigma ij0}\right)^{T}J\vv{a}_{ij0}^{3}\right\} \bigg].
\end{split}
\end{align}
Moreover, we note that the contravariant coordinate vectors are discretized by \eqref{DiscreteContravariantVectors} and thus the discrete metric identities \eqref{DiscreteMetricIdentities} are satisfied \cite{kopriva2006metric}. Hence, it follows 
\begin{align}\label{KEPVolumeContribution13}
\begin{split}
&\sum_{s=1}^{3}\left\langle p\vv{v},\pderivative{\interpolation{N}{\left(J\vv{a}^{s}}\right)}{\xi^{s}}\right\rangle _{N\times M} \\ 
=&\sum_{\sigma=0}^{M}\omega_{\sigma}\bigg[\sum_{i,j,k=0}^{N}\omega_{ijk}p_{\sigma ijk}\left(v_{1}\right)_{\sigma ijk}\left\{ \sum_{m=0}^{N}\matx{D}_{im}\left(J{a}_{1}^{1}\right)_{mjk}+\matx{D}_{jm}\left(J{a}_{1}^{2}\right)_{imk}+\matx{D}_{km}\left(J{a}_{1}^{3}\right)_{ijm}\right\}  \\
& \qquad  \qquad \qquad    +\omega_{ijk}p_{\sigma ijk}\left(v_{2}\right)_{\sigma ijk}\left\{ \sum_{m=0}^{N}\matx{D}_{im}\left(J{a}_{2}^{1}\right)_{mjk}+\matx{D}_{jm}\left(J{a}_{2}^{2}\right)_{imk}+\matx{D}_{km}\left(J{a}_{2}^{3}\right)_{ijm}\right\}  \\
& \qquad  \qquad \qquad +\omega_{ijk}p_{\sigma ijk}\left(v_{3}\right)_{\sigma ijk}\left\{ \sum_{m=0}^{N}\matx{D}_{im}\left(J{a}_{3}^{1}\right)_{mjk}+\matx{D}_{jm}\left(J{a}_{3}^{2}\right)_{imk}+\matx{D}_{km}\left(J{a}_{3}^{3}\right)_{ijm}\right\} \bigg]=0.
\end{split}
\end{align}

Therefore, by plugging \eqref{KEPVolumeContribution8}, \eqref{KEPVolumeContribution9} and \eqref{KEPVolumeContribution10} in \eqref{KEPVolumeContribution1}, we obtain the desired identity 
\begin{equation}
\left\langle \Dprojection{N}\cdot\blockvec{\tilde{\mathbf{F}}}{}^{\text{KEP}},\mathbf{V}\right\rangle _{N\times M}=-\left\langle \vv{\nabla}_{\xi}\cdot\interpolation{N}{\left(\vv{\tilde{v}}\right)},p\right\rangle _{N\times M}+\int\limits _{\partial E^{3},N}\left\langle \tilde{F}_{\hat{n}}^{\kappa}+p\tilde{v}_{\hat{n}},1\right\rangle _{M}\dS. 
\end{equation}

\section{Entropy conservative and kinetic energy preserving flux of Ranocha}\label{sec:App E}

Here we state the entropy conservative and kinetic energy preserving (ECKEP) flux for the Euler equations recently developed by Ranocha \cite{ranocha2018}. The flux satisfies the entropy conservative spatial condition \eqref{EntropyFunctional2} as well as the KEP conditions of Jameson \eqref{Jameson2} on Cartesian meshes. The flux can be made entropy stable by adding a matrix dissipation of the form \eqref{CartesianFluxesES} where complete details are given in Gassner et al. \cite{Gassner2017}. We rewrite the form of the ECKEP originally presented by Ranocha \cite{ranocha2018} through the use of the identity
\begin{equation}
\frac{1}{4}\jump{a}\jump{b} = \avg{ab} - \avg{a}\avg{b}.
\end{equation}
We do so such that the ECKEP flux only involves arithmetic and logarithmic means. The flux in the $x-$direction is given by
\begin{equation}
\resizebox{\textwidth}{!}{$
\vec{F}_1^{\text{ECKEP}} = \begin{bmatrix}
\rho^{\ln}\avg{v_1}\\[0.15cm]
\rho^{\ln}\avg{v_1}^2 + \avg{p}\\[0.15cm]
\rho^{\ln}\avg{v_1}\avg{v_2}\\[0.15cm]
\rho^{\ln}\avg{v_1}\avg{v_3}\\[0.15cm]
\left(\rho^{\ln}\left(\avg{v_1}^2+\avg{v_2}^2+\avg{v_3}^2-\frac{1}{2}\left(\avg{v_1^2}+\avg{v_2^2}+\avg{v_3^2}\right)\right)+\frac{\rho^{\ln}}{(\gamma-1)\left(\frac{\rho}{p}\right)^{\ln}}\right)\avg{v_1} + 2\avg{p}\avg{v_1}-\avg{p v_1}\\[0.15cm]
\end{bmatrix}.
$}
\end{equation}
The ECKEP flux in the other Cartesian directions is then easily recovered by rotation.
\end{document}